\titleformat*{\section}{\normalsize\bfseries}
\titleformat*{\subsection}{\normalsize\bfseries}
\def\R{\mathbb{R}}
\def\e{{\varepsilon}}        
\def\wh{\widehat}
\def\p{\partial}
\def\d{\displaystyle}
\newtheorem{thm}{Theorem}[section]
\newtheorem{lem}[thm]{Lemma}
\newtheorem{cor}[thm]{Corollary}
\newtheorem{prop}[thm]{Proposition}
\newtheorem{rem}[thm]{Remark}
\begin{document}

\title{
\vspace{-1cm}
\large{\bf LARGE TIME BEHAVIOR OF SOLUTIONS TO THE CAUCHY PROBLEM FOR THE BBM--BURGERS EQUATION}}
\author{Ikki Fukuda and Masahiro Ikeda
}
\date{}
\maketitle

\footnote[0]{2020 Mathematics Subject Classification: 35B40, 35Q53.}

\vspace{-0.75cm}
\begin{abstract}
We consider the large time behavior of the solutions to the Cauchy problem for the BBM--Burgers equation. We prove that the solution to this problem goes to the self-similar solution to the Burgers equation called the nonlinear diffusion wave. Moreover, we construct the appropriate second asymptotic profiles of the solutions depending on the initial data. Based on that discussion, we investigate the effect of the initial data on the large time behavior of the solution, and derive the optimal asymptotic rate to the nonlinear diffusion wave. Especially, the important point of this study is that the second asymptotic profiles of the solutions with slowly decaying data, whose case has not been studied, are obtained.
\end{abstract}

\medskip
\noindent
{\bf Keywords:} 
BBM--Burgers equation, global existence, large time behavior, 
second asymptotic profiles, optimal asymptotic rate, slowly decaying data. 

\section{Introduction}

We consider the Cauchy problem for the following BBM--Burgers equation:
\begin{align}
\label{BBMB}
\begin{split}
u_{t}-u_{xxt}-u_{xx}+\gamma u_{xxx}+\beta uu_{x}&=0, \ \ x\in \R, \ t>0,\\
u(x, 0)&=u_{0}(x) , \ \ x\in \R, 
\end{split}
\end{align}
where $u=u(x, t)$ is a real-valued unknown function, $u_{0}(x)$ is a given initial data, $\beta\neq 0$ and $\gamma \in \R$. The subscripts $t$ and $x$ denote the partial derivatives with respect to $t$ and $x$, respectively. In the present paper, we assume that $u_{0}(x)$ is continuous and satisfies the following condition: 
\begin{equation}\label{data}
\exists \alpha>1, \ \ \exists C>0 \ \ s.t. \ \ |u_{0}(x)|\le C(1+|x|)^{-\alpha}, \ \ x\in \R.  
\end{equation}
The purpose of our study is to obtain the large time asymptotic profile of the solutions to \eqref{BBMB}. In particular, we investigate how the change of the decay rate $\alpha$ of the initial data affects its asymptotic behavior, and construct the appropriate second asymptotic profiles of the solution. In addition, by using the obtained second asymptotic profiles, we would like to derive the optimal convergence rates to the first asymptotic profile of the solution. 

First of all, we recall known results about the Cauchy problem \eqref{BBMB}. For the global existence and appropriate time decay estimates for the solutions to \eqref{BBMB} in Sobolev spaces, we can refer to \cite{ABS89, L95, M99}. The large time asymptotic behavior of the solutions to \eqref{BBMB} was first treated by Naumkin \cite{N93}. Moreover, Hayashi--Kaikina--Naumkin~\cite{HKN07} generalized this result in \cite{N93}. More precisely, they showed that if $\beta=1$ and the initial data $u_{0}\in H^{1}(\R)\cap W^{1, 1}(\R)$, then there exists a unique global solution $u\in C^{0}([0, \infty); H^{1}(\R)\cap W^{1, 1}(\R))$ satisfying the following asymptotics: 
\begin{equation}\label{hayashi-1st-1}
\lim_{t\to \infty}t^{\frac{1}{2}}\left\|u(\cdot, t)-t^{-\frac{1}{2}}f_{M}\left((\cdot)t^{-\frac{1}{2}}\right)\right\|_{L^{\infty}}
=0,
\end{equation}
where $f_{M}(x)$ and $M$ are defined by 
\begin{equation}\label{self-similar}
f_{M}(x):=-2\p_{x}\log\left(\cosh\frac{M}{4}-\sinh\left(\frac{M}{4}\right)\frac{2}{\sqrt{\pi}}\int_{0}^{x/2}e^{-y^{2}}dy\right), \quad
M:=\int_{\R}u_{0}(x)dx\neq0. 
\end{equation}
Moreover, if additionally the initial data $u_{0}\in L^{1}_{1}(\R)$, then the following asymptotics is true: 
\begin{equation}\label{hayashi-1st-2}
\left\|u(\cdot, t)-t^{-\frac{1}{2}}f_{M}\left((\cdot)t^{-\frac{1}{2}}\right)\right\|_{L^{\infty}}
\le Ct^{-\frac{1}{2}-\delta}
\end{equation}
for sufficiently large $t>0$, where $\delta\in (0, 1/2)$ and the weighted Lebesgue space $L_{1}^{1}(\R)$ is defined by 
\begin{equation*}
L^{1}_{1}(\R):=\left\{ f \in L^{1}(\R); \ \|f\|_{L^{1}_{1}}:=\int_{\R}|f(x)|(1+|x|)dx<\infty \right\}.
\end{equation*}
\newpage
\noindent
The asymptotic function $t^{-\frac{1}{2}}f_{M}(xt^{-\frac{1}{2}})$ is the self-similar solution for the Burgers equation \cite{C51, H50}: 
\begin{equation}\label{Burgers-standard}
u_{t}+uu_{x}-u_{xx}=0, \ \ x\in \R, \ t>0. 
\end{equation}
We note that the above self-similar solution is often expressed by using the so-called nonlinear diffusion wave $\chi(x, t)$ defined below (see e.g.~\cite{F19-1, F19-2, K07, MN04}). Actually, if we set
\begin{equation}\label{diffusion-wave}
\chi(x, t):=\frac{1}{\sqrt{1+t}} \chi_{*} \left(\frac{x}{\sqrt{1+t}}\right), \quad x\in \R, \ \ t\ge0,
\end{equation}
where
\begin{equation}\label{diffusion-wave*}
\chi_{*}(x):=\frac{1}{\beta}\frac{(e^{\beta M/2}-1)e^{-x^{2}/4}}{\sqrt{\pi}+(e^{\beta M/2}-1)\int_{x/2}^{\infty}e^{-y^{2}}dy}, \quad M:=\int_{\R}u_{0}(x)dx\neq0, 
\end{equation}
then, we can easily see that $f_{M}(x)=\chi_{*}(x)$ i.e. $t^{-\frac{1}{2}}f_{M}(xt^{-\frac{1}{2}})=\chi(x, t-1)$ when $\beta=1$. Moreover $\chi(x, t)$ satisfies the following Burgers equation with a conservation law:  
\begin{equation}\label{Burgers}
\chi_{t}+\beta \chi \chi_{x}-\chi_{xx}=0, \quad \int_{\R}\chi(x, t)dx=M. 
\end{equation}

Furthermore, Hayashi--Kaikina--Naumkin~\cite{HKN07} also studied the optimality of the asymptotic rate \eqref{hayashi-1st-2} to the self-similar solution $t^{-\frac{1}{2}}f_{M}(xt^{-\frac{1}{2}})$ for the Burgers equation \eqref{Burgers-standard} by constructing the second asymptotic profile of the solutions to \eqref{BBMB}. More precisely, if $u_{0}\in H^{1}(\R)\cap W^{1, 1}(\R)\cap L^{1}_{1}(\R)$, then the following estimate is established: 
\begin{equation}\label{hayashi-2nd}
\left\|u(\cdot, t)-t^{-\frac{1}{2}}f_{M}\left((\cdot)t^{-\frac{1}{2}}\right)-t^{-1}\log t\,\widetilde{f}_{M}\left((\cdot)t^{-\frac{1}{2}}\right)\right\|_{L^{\infty}}\le Ct^{-1}
\end{equation}
for sufficiently large $t>0$, where $f_{M}(x)$ is defined by \eqref{self-similar} and  $\widetilde{f}_{M}(x)$ is defined by 
\begin{align}
\widetilde{f}_{M}(x)&:=-\frac{\gamma (f_{M}(x)-x)e^{-\frac{x^{2}}{4}}}{32\sqrt{\pi}H(x)}\int_{\R}H(y)(f_{M}(y))^{3}dy, \label{hayashi-second}\\
H(x)&:=\cosh\frac{M}{4}-\sinh\left(\frac{M}{4}\right)\frac{2}{\sqrt{\pi}}\int_{0}^{x/2}e^{-y^{2}}dy. \label{hayashi-H}
\end{align}
From \eqref{hayashi-2nd}, the triangle inequality and \eqref{hayashi-second}, one can obtain the optimal asymptotic relation: 
\begin{equation}\label{hayashi-optimal}
\left\|u(\cdot, t)-t^{-\frac{1}{2}}f_{M}\left((\cdot)t^{-\frac{1}{2}}\right)\right\|_{L^{\infty}}=\left(\left\|\widetilde{f}_{M}(\cdot)\right\|_{L^{\infty}}+o(1)\right)t^{-1}\log t \quad as \ \ t\to \infty.
\end{equation}
Therefore, if $\beta=1$, $\gamma \neq0$ and $M \neq0$, we can see that the solution $u(x, t)$ to the Cauchy problem \eqref{BBMB} converges to the self-similar solution $t^{-\frac{1}{2}}f_{M}(xt^{-\frac{1}{2}})$ at the optimal asymptotic rate of $t^{-1}\log t$. This means that we cannot take $\delta=1/2$ in \eqref{hayashi-1st-2} and the logarithmic term $\log t$ in \eqref{hayashi-optimal} is an essential factor. The similar estimates as \eqref{hayashi-1st-2}, \eqref{hayashi-2nd} and \eqref{hayashi-optimal} 
and other related results are also obtained for several dissipative type equations, such as the generalized BBM--Burgers equation \cite{K97-1, K99-1, K97-2, HKNS06}, the KdV--Burgers equation \cite{F19-1, HN06, KP05, K99-2, HKNS06}, the generalized Burgers equation \cite{K07, MN04} and also the damped wave equation with a nonlinear convection term \cite{KU17}.

In the above previous results, the initial data $u_{0}(x)$ is assumed to be decaying fast as $|x|\rightarrow\infty$, i.e. $u_{0}\in L^{1}_{1}(\R)$ to prove \eqref{hayashi-1st-2} and \eqref{hayashi-2nd}. To realize this condition, we need to assume $\alpha>2$ in \eqref{data}. However, for the problem \eqref{BBMB} in the case of $\alpha\le 2$ in \eqref{data}, the second asymptotic profiles, and the optimal asymptotic rate to the nonlinear diffusion wave are not known. For this reason, in the present paper, we would like to analyze \eqref{BBMB} in the case that the initial data decays more slowly, i.e. in the case of $1<\alpha \le 2$ in \eqref{data}. Especially, we construct the appropriate second asymptotic profiles depending on the initial data to the solution. Based on that discussion, we investigate the effect of the initial data on the large time behavior of the solution, and derive the optimal asymptotic rate to the nonlinear diffusion wave. To obtain the second asymptotic profiles, we develop the method used in \cite{F19-2, F20} to study the Cauchy problem of the KdV--Burgers equation \cite{F19-2} and of the damped wave equation with a nonlinear convection term \cite{F20}. 

\medskip
Now, let us state our first main result which generalizes the results given in \cite{HKN07} and treat the case of the initial data decays more slowly: 
\begin{thm}\label{main.thm-1st.AP}
Let $s\ge1$ be an integer. Assume that the initial data $u_{0}(x)$ satisfies the condition \eqref{data}, $u_{0}\in H^{s}(\R)$ and $\|u_{0}\|_{H^{s}}+\|u_{0}\|_{L^{1}}$ is sufficiently small. Then, \eqref{BBMB} has a unique global mild solution $u\in C^{0}([0, \infty); H^{s}(\R))$. Moreover, for any $\e>0$, the estimate
\begin{align}\label{main.thm-1st.AP-L2}
\left\|\p_{x}^{l}(u(\cdot, t)-\chi(\cdot, t))\right\|_{L^{2}}\le C\begin{cases}
(1+t)^{-\frac{\alpha}{2}+\frac{1}{4}-\frac{l}{2}}, &t\ge0, \ 1<\alpha<2,\\
(1+t)^{-\frac{3}{4}-\frac{l}{2}+\e}, &t\ge0, \ \alpha\ge2
\end{cases}
\end{align}
holds for any integer $l$ satisfying $0\le l\le s$. Furthermore, 
\begin{align}\label{main.thm-1st.AP-Linfinity}
\left\|\p_{x}^{l}(u(\cdot, t)-\chi(\cdot, t))\right\|_{L^{\infty}}\le C\begin{cases}
(1+t)^{-\frac{\alpha}{2}-\frac{l}{2}}, &t\ge0, \ 1<\alpha<2,\\
(1+t)^{-1-\frac{l}{2}+\e}, &t\ge0, \ \alpha\ge2
\end{cases}
\end{align}
holds for any integer $l$ satisfying $0\le l\le s-1$. Here, $\chi(x, t)$ is defined by \eqref{diffusion-wave}. 
\end{thm}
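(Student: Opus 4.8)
The plan is to work with the mild (Duhamel) formulation. Writing the linear part of \eqref{BBMB} as $(1-\p_x^2)u_t=u_{xx}-\gamma u_{xxx}$, the linear propagator $S(t)$ is the Fourier multiplier with symbol $\exp\!\big(\tfrac{-\xi^2+i\gamma\xi^3}{1+\xi^2}\,t\big)$, so that \eqref{BBMB} is equivalent to
\[
u(t)=S(t)u_0-\frac{\beta}{2}\int_0^t S(t-\tau)(1-\p_x^2)^{-1}\p_x\big(u^2\big)(\tau)\,d\tau .
\]
First I would establish the linear decay estimates for $S(t)$, $S(t)(1-\p_x^2)^{-1}\p_x$ and their $x$--derivatives by splitting the symbol: for $|\xi|\le1$ one has $\mathrm{Re}\,\tfrac{-\xi^2+i\gamma\xi^3}{1+\xi^2}\sim-\xi^2$, so $S(t)$ behaves like the heat semigroup and gives polynomial $L^1$--$L^q$ and $L^2$--based decay with a gain of half a derivative per factor $(1+t)^{-1/2}$; for $|\xi|\ge1$ one has $\mathrm{Re}\,\tfrac{-\xi^2}{1+\xi^2}\le-\tfrac12$, so $S(t)$ is $e^{-ct}$ times a bounded multiplier, giving exponential decay at the cost of $H^s$--regularity. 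Using these bounds and the smallness of $\delta_0:=\|u_0\|_{H^s}+\|u_0\|_{L^1}$, a standard contraction argument in $C([0,\infty);H^s)$ equipped with the time--weighted norm $\sum_{l=0}^{s}\sup_{t\ge0}(1+t)^{\frac14+\frac l2}\|\p_x^l u(t)\|_{L^2}+\sum_{l=0}^{s-1}\sup_{t\ge0}(1+t)^{\frac12+\frac l2}\|\p_x^l u(t)\|_{L^\infty}$ yields the unique global mild solution together with the a priori decay $\|\p_x^l u(t)\|_{L^2}\lesssim(1+t)^{-\frac14-\frac l2}$ and $\|\p_x^l u(t)\|_{L^\infty}\lesssim(1+t)^{-\frac12-\frac l2}$.

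Next I would derive the equation for the difference $w:=u-\chi$. Since $\chi$ solves \eqref{Burgers} and $uu_x-\chi\chi_x=\tfrac12\p_x\big((u+\chi)w\big)$, subtraction gives
\[
w_t-w_{xxt}-w_{xx}+\gamma w_{xxx}=-\frac{\beta}{2}\p_x\big((u+\chi)w\big)+\chi_{xxt}-\gamma\chi_{xxx},
\]
hence, with $w(0)=u_0-\chi_{*}$,
\[
w(t)=S(t)w(0)+\int_0^t S(t-\tau)(1-\p_x^2)^{-1}\Big[-\frac{\beta}{2}\p_x\big((u+\chi)w\big)+\chi_{xxt}-\gamma\chi_{xxx}\Big](\tau)\,d\tau .
\]
Three contributions must then be estimated. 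The linear term $S(t)w(0)$ is where the decay rate $\alpha$ of the data enters decisively: since $\chi_{*}$ is Gaussian--like one has $|w(0)(x)|\le C(1+|x|)^{-\alpha}$, and since $\int_{\R}w(0)\,dx=M-M=0$ one proves the sharp Fourier bound $|\wh{w(0)}(\xi)|\le C|\xi|^{\min\{\alpha-1,1\}}$ for $|\xi|\le1$ (with an extra logarithm when $\alpha=2$); combining this with the low--frequency estimate for $S(t)$ yields $\|\p_x^l S(t)w(0)\|_{L^2}\lesssim(1+t)^{-\frac\alpha2+\frac14-\frac l2}$ for $1<\alpha<2$ and $\lesssim(1+t)^{-\frac34-\frac l2+\e}$ for $\alpha\ge2$, with the corresponding $L^\infty$ rates, while the high--frequency part is only $O(e^{-ct}\|w(0)\|_{H^l})$ and is negligible.

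For the diffusion--wave remainder $\chi_{xxt}-\gamma\chi_{xxx}$ one uses the self--similar bounds $\|\p_x^k\chi(\tau)\|_{L^1}\lesssim(1+\tau)^{-k/2}$ and $\|\p_t\p_x^k\chi(\tau)\|_{L^1}\lesssim(1+\tau)^{-1-k/2}$ (with $L^2$--versions); writing each term as $\p_x$ of a quantity of $L^1$--size $\lesssim(1+\tau)^{-1}$, the Duhamel integral is $\lesssim(1+t)^{-\frac34-\frac l2}$ up to a borderline logarithm absorbed into the $\e$, which for $1<\alpha<2$ is strictly faster than the rate from the linear term because $\tfrac\alpha2-\tfrac14<\tfrac34$ there. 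For the nonlinear term, the factor $\p_x$ supplies the low--frequency gain and $\|(u+\chi)w\|_{L^1}\le\|u+\chi\|_{L^2}\|w\|_{L^2}\lesssim\delta_0(1+\tau)^{-\frac14}\|w(\tau)\|_{L^2}$, so it is controlled by a small multiple of the bootstrap quantity $\sup_{0\le\tau\le t}(1+\tau)^{\theta}\|w(\tau)\|_{L^2}$ with $\theta$ the target rate; in the subcritical regime $1<\alpha<2$, where $\theta=\tfrac\alpha2-\tfrac14<\tfrac34$, the time integral reproduces $(1+t)^{-\theta}$ cleanly, whereas in the critical regime $\alpha\ge2$, where $\theta=\tfrac34$, it produces the logarithm that forces the $\e$.

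Combining these three estimates and running a standard bootstrap argument on the time--weighted norm closes the $L^2$ estimate \eqref{main.thm-1st.AP-L2}; the higher--order estimates ($0\le l\le s$) follow by differentiating the Duhamel formula and distributing derivatives between the semigroup---using its low--frequency smoothing together with its $L^2$--boundedness near the diagonal $\tau\approx t$ and the already--established lower--order decay of $w$ and of $u$---and the nonlinearity. Finally the $L^\infty$ bounds \eqref{main.thm-1st.AP-Linfinity} follow from \eqref{main.thm-1st.AP-L2} and the Gagliardo--Nirenberg inequality $\|\p_x^l w\|_{L^\infty}\le C\|\p_x^l w\|_{L^2}^{1/2}\|\p_x^{l+1}w\|_{L^2}^{1/2}$, which accounts for the restriction $0\le l\le s-1$ there. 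The main obstacle, and the genuinely new point compared with \cite{HKN07}, is the linear term: establishing and then exploiting the sharp bound $|\wh{w(0)}(\xi)|\lesssim|\xi|^{\alpha-1}$ in the slowly--decaying regime $1<\alpha<2$. A secondary difficulty is the bookkeeping of the three competing decay rates---isolating the logarithmic endpoints responsible for the $\e$ and verifying that the quadratic nonlinearity stays subordinate so that the bootstrap closes---together with the handling of the near--diagonal part of the Duhamel integral in the derivative estimates.
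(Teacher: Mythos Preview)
Your approach is correct and parallels the paper's in overall architecture (contraction for global existence, Duhamel representation for $u-\chi$, bootstrap to close), but the decomposition differs in two places worth noting. For the linear piece, the paper splits $S(t)w(0)$ as $(T-G)(t)*u_0 + G(t)*\psi_0$ (with $T=S$ and $G$ the heat kernel), proves that $(T-G)$ gains an extra factor $(1+t)^{-1/2}$ (Proposition~\ref{prop.linear-asymptotic}), and then handles $G(t)*\psi_0$ via a physical-space heat-kernel lemma for zero-mean slowly decaying data (Lemma~\ref{lem.heat-decay}); your Fourier-side bound $|\widehat{w(0)}(\xi)|\lesssim|\xi|^{\alpha-1}$ is the dual statement of that lemma and gives the same rates. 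For the Duhamel part, the paper again exploits the $(T-G)$ comparison on the $\chi^2$ contribution, together with the extra derivatives in $(1-\p_x^2)^{-1}\p_x^2(\chi^2)$, to obtain a clean $(1+t)^{-3/4-l/2}$ with no logarithm; by contrast your source terms $\chi_{xxt}-\gamma\chi_{xxx}$ under $S(t-\tau)(1-\p_x^2)^{-1}$ do produce a borderline $\log(1+t)$, but as you correctly observe this is harmless here since it is strictly faster than the target rate when $1<\alpha<2$ and is absorbed by the $\e$ when $\alpha\ge2$. Your route is slightly more direct; the paper's organization buys cleaner intermediate estimates on the $\chi$-source terms, which becomes relevant in Section~4 where the second asymptotic profile must be isolated at the exact rate $(1+t)^{-1}\log(1+t)$.
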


Next we state our second main result, in which we construct the second asymptotic profiles of the solution depending on the decay rate of the initial data:
\begin{thm}\label{main.thm-2nd.AP}
Let $s\ge2$ be an integer. Assume that the initial data $u_{0}(x)$ satisfies the condition \eqref{data}, $u_{0}\in H^{s}(\R)$ and $\|u_{0}\|_{H^{s}}+\|u_{0}\|_{L^{1}}$ is sufficiently small. In addition, we set
\begin{equation}\label{data-r0}
r_{0}(x):=\eta_{*}(x)^{-1}\int_{-\infty}^{x}\left(u_{0}(y)-\chi_{*}(y)\right)dy, \quad 
\eta_{*}(x):=\exp \left(\frac{\beta}{2}\int_{-\infty}^{x}\chi_{*}(y)dy\right),
\end{equation} 
where $\chi_{*}(x)$ is defined by \eqref{diffusion-wave*}, and suppose that there exist $\lim_{x\to \pm \infty}(1+|x|)^{\alpha-1}r_{0}(x)=:c_{\alpha}^{\pm}$. Then, the solution to \eqref{BBMB} satisfies
\begin{alignat}{3}
&\lim_{t\to \infty}(1+t)^{\frac{\alpha}{2}+\frac{l}{2}}\left\|\p_{x}^{l}(u(\cdot, t)-\chi(\cdot, t)-Z(\cdot, t))\right\|_{L^{\infty}}=0, &\quad &1<\alpha<2, \label{main.thm-2nd.AP-1}\\
&\lim_{t\to \infty}\frac{(1+t)^{1+\frac{l}{2}}}{\log(1+t)}\left\|\p_{x}^{l}(u(\cdot, t)-\chi(\cdot, t)-Z(\cdot, t)-V(\cdot, t))\right\|_{L^{\infty}}=0, &\quad &\alpha=2,  \label{main.thm-2nd.AP-2} \\
&\left\|\p_{x}^{l}(u(\cdot, t)-\chi(\cdot, t)-V(\cdot, t))\right\|_{L^{\infty}}\le C(1+t)^{-1-\frac{l}{2}}, \ \ t\ge1, &\quad &\alpha>2 \label{main.thm-2nd.AP-3}
\end{alignat}
for all integer $l$ satisfying $0\le l\le s-2$. Here, $Z(x, t)$ is defined by   
\begin{align}
Z(x, t)&:=\int_{\R}\frac{c_{\alpha}(y)\p_{x}(G(x-y, t)\eta(x, t))}{(1+|y|)^{\alpha-1}}dy, \quad  
c_{\alpha}(y):=\begin{cases}
c_{\alpha}^{+}, &y\ge0, \\
c_{\alpha}^{-}, &y<0,
\end{cases} \label{Second-AP-Z} \\
G(x, t)&:=\frac{1}{\sqrt{4\pi t}}e^{-\frac{x^{2}}{4t}}, \quad 
\eta(x, t):=\eta_{*}\left(\frac{x}{\sqrt{1+t}}\right)
=\exp \left(\frac{\beta}{2}\int_{-\infty}^{x}\chi(y, t)dy\right), \label{heat-eta}
\end{align}
and $V(x, t)$ is defined by 
\begin{align}
V(x, t)&:=-\kappa dV_{*}\left(\frac{x}{\sqrt{1+t}}\right)(1+t)^{-1}\log(1+t), \label{Second-AP-V} \\
V_{*}(x)&:=\frac{1}{\sqrt{4\pi}}\frac{d}{dx}\left(\eta_{*}(x)e^{-\frac{x^{2}}{4}}\right)
=\frac{1}{4\sqrt{\pi}}\left(\beta \chi_{*}(x)-x\right)\eta_{*}(x)e^{-\frac{x^{2}}{4}}, \label{V*}\\
d&:=\int_{\R}(\eta_{*}(y))^{-1}(\chi_{*}(y))^{3}dy, \quad \kappa:=\frac{\beta^{2}\gamma}{8}. \label{kappa}
\end{align}
\end{thm}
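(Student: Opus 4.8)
The plan is to reduce Theorem~\ref{main.thm-2nd.AP}, via a Cole--Hopf-type change of unknown, to the asymptotic analysis of the linear heat equation with the slowly decaying datum $r_{0}$ plus a controlled perturbation, and then to translate the result back. First, by Theorem~\ref{main.thm-1st.AP} and the conservation law $\int_{\R}u(\cdot,t)\,dx=\int_{\R}\chi(\cdot,t)\,dx=M$, which follows from \eqref{BBMB} and \eqref{Burgers}, the antiderivative
\begin{equation*}
W(x,t):=\int_{-\infty}^{x}\bigl(u(y,t)-\chi(y,t)\bigr)\,dy
\end{equation*}
is well defined, tends to $0$ as $|x|\to\infty$, and, after integrating the equation satisfied by $u-\chi$ once in $x$, obeys
\begin{equation*}
W_{t}-W_{xx}+\beta\chi W_{x}=W_{xxt}-\gamma W_{xxx}-\tfrac{\beta}{2}W_{x}^{2}+\bigl(\chi_{xxx}-\beta\p_{x}(\chi\chi_{x})-\gamma\chi_{xx}\bigr),
\end{equation*}
the last parenthesis being the forcing caused by $\chi$ solving \eqref{Burgers} rather than \eqref{BBMB}. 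Using the elementary identity $\eta_{t}-\eta_{xx}+\beta\chi\eta_{x}=0$ for $\eta$ in \eqref{heat-eta} (equivalently $\beta\chi=2\p_{x}\log\eta$), the substitution $W=\eta\,p$ conjugates the operator $\p_{t}-\p_{x}^{2}+\beta\chi\p_{x}$ to the heat operator, so $p:=\eta^{-1}W$ solves
\begin{equation*}
p_{t}-p_{xx}=\eta^{-1}\Bigl(W_{xxt}-\gamma W_{xxx}-\tfrac{\beta}{2}W_{x}^{2}+\chi_{xxx}-\beta\p_{x}(\chi\chi_{x})-\gamma\chi_{xx}\Bigr),\qquad p(\cdot,0)=r_{0}.
\end{equation*}

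Next I would solve this for $p$ by Duhamel's formula with the heat semigroup $e^{t\p_{x}^{2}}$ and examine the contributions. For the linear part $e^{t\p_{x}^{2}}r_{0}$, split $r_{0}(x)=c_{\alpha}(x)(1+|x|)^{-(\alpha-1)}+\bigl(r_{0}(x)-c_{\alpha}(x)(1+|x|)^{-(\alpha-1)}\bigr)$; the hypothesis $(1+|x|)^{\alpha-1}r_{0}(x)\to c_{\alpha}^{\pm}$ makes the second term decay strictly faster, while the heat evolution of the first is precisely $h(x,t):=\int_{\R}c_{\alpha}(y)(1+|y|)^{-(\alpha-1)}G(x-y,t)\,dy$, which decays like $(1+t)^{-(\alpha-1)/2}$ for $1<\alpha<2$, like $(1+t)^{-1/2}\log(1+t)$ for $\alpha=2$, and, being $L^{1}$ in $y$ for $\alpha>2$, there only produces an $O((1+t)^{-1/2})$ term which, after one $x$-derivative, is $O((1+t)^{-1})$ and absorbed into the stated remainder. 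Since $u-\chi=W_{x}=\p_{x}(\eta p)$, recalling \eqref{Second-AP-Z} identifies $\p_{x}(\eta\,h)=Z$.

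The logarithmic profile $V$ is extracted from the $\gamma$-dependent forcing in the $p$-equation, which is the footprint of the dispersive term $\gamma u_{xxx}$ that the diffusion wave cannot carry. Using $\chi(x,t)=(1+t)^{-1/2}\chi_{*}(x/\sqrt{1+t})$, $\eta(x,t)=\eta_{*}(x/\sqrt{1+t})$, and $\eta_{x}=\tfrac{\beta}{2}\chi\eta$, repeated integration by parts gives $\int_{\R}\eta^{-1}(-\gamma\chi_{xx})\,dx=-\gamma\tfrac{\beta^{2}}{8}\int_{\R}\eta^{-1}\chi^{3}\,dx=-\kappa\,d\,(1+t)^{-1}$ with $\kappa,d$ as in \eqref{kappa}, whereas the companion forcing terms $-\beta\eta^{-1}\p_{x}(\chi\chi_{x})$, $-\gamma\eta^{-1}W_{xxx}$ and $\eta^{-1}W_{xxt}$ have strictly smaller mass. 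Hence in the Duhamel integral the $\gamma$-part of the forcing is resonant: $\int_{0}^{t}e^{(t-s)\p_{x}^{2}}\bigl(-\gamma\eta^{-1}\chi_{xx}\bigr)(\cdot,s)\,ds\approx-\kappa\,d\,\log(1+t)\,G(\cdot,1+t)$ because $\int_{0}^{t/2}(1+s)^{-1}\,ds\sim\log t$, and since $\p_{x}\bigl(\eta(\cdot,t)G(\cdot,1+t)\bigr)=(1+t)^{-1}V_{*}(\cdot/\sqrt{1+t})$ with $V_{*}$ as in \eqref{V*}, back-substitution yields exactly $V$ in \eqref{Second-AP-V}. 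All remaining pieces — the genuine nonlinearity $\tfrac{\beta}{2}W_{x}^{2}$, the BBM correction $W_{xxt}$ (handled by substituting $W_{t}$ from the equation and iterating once, which replaces it by higher $x$-derivatives of $W$, or via $(1-\p_{x}^{2})^{-1}$ and its bounded commutator with the conjugation), the higher-order forcing, the faster-decaying part of $r_{0}$, and the cross term $\eta_{x}p$ in $\p_{x}(\eta p)=\eta_{x}p+\eta p_{x}$ — are shown to be of strictly lower order by Theorem~\ref{main.thm-1st.AP}, the Gaussian decay of $\chi$ and $\eta^{-1}$, and weighted $L^{1}$--$L^{\infty}$ heat-kernel estimates; applying $\p_{x}^{l}$ and using the smoothing of $e^{t\p_{x}^{2}}$ then gives \eqref{main.thm-2nd.AP-1}--\eqref{main.thm-2nd.AP-3}.

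I expect the main obstacle to be the combination of the two extraction steps with the rigorous justification of the reduction: isolating the single resonant contribution that produces $(1+t)^{-1}\log(1+t)$ with the exact constant $\kappa d$ while proving that every competing term — in particular those generated by the nonlocal operator $1-\p_{x}^{2}$, which does not commute with the Cole--Hopf conjugation — is $o$ of the claimed order, uniformly in $x$ and for all $0\le l\le s-2$. A secondary difficulty is that for $1<\alpha\le2$ the residual $W$ decays only like $(1+t)^{(1-\alpha)/2}$, so the estimates controlling $W_{x}^{2}$ and $W_{xxt}$ are borderline and must be closed using the extra decay from the $x$-derivative together with, at $\alpha=2$, careful bookkeeping of the logarithmic factors; this is presumably where the method of \cite{F19-2, F20} is developed further.
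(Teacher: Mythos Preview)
Your proposal is correct and is essentially the paper's own proof: the substitution $p=\eta^{-1}W$ with $W=\int_{-\infty}^{x}(u-\chi)$ is exactly what is packaged into the operator $U$ of Lemma~\ref{lem.formula}, the splitting of $e^{t\p_{x}^{2}}r_{0}$ is Proposition~\ref{prop-linear-2nd.AP}, and your mass extraction of the $-\gamma\chi_{xx}$ forcing is the content of the auxiliary problem \eqref{second-aux} and Proposition~\ref{lem.second-aux}. The one organizational difference is that the paper eliminates the troublesome $W_{xxt}$ term \emph{before} conjugating, by applying $(1-\p_{x}^{2})^{-1}$ to \eqref{BBMB} (see \eqref{BBMB-up} and \eqref{BBMB-up-2}), which is cleaner than the iteration you propose and sidesteps the commutator issue you flag.
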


\begin{rem}{\rm
The similar results as \eqref{main.thm-2nd.AP-1} and \eqref{main.thm-2nd.AP-2} with $l=0$ are obtained by the first author, for the generalized KdV--Burgers equation \cite{F19-1} and the damped wave equation with a nonlinear convection term \cite{F20}. We emphasize that our results \eqref{main.thm-2nd.AP-1} and \eqref{main.thm-2nd.AP-2} are more general and include the estimates for derivatives of the solutions. In addition, our result \eqref{main.thm-2nd.AP-3} includes \eqref{hayashi-2nd}, which is given by Hayashi--Kaikina--Naumkin~\cite{HKN07}. Indeed, the second asymptotic profile $V(x, t)$ given by us and $t^{-1}\log t\,\widetilde{f}_{M}(xt^{-\frac{1}{2}})$ given in \cite{HKN07} are essentially the same. More precisely, by using \eqref{self-similar}, \eqref{diffusion-wave}, \eqref{diffusion-wave*}, \eqref{hayashi-second}, \eqref{hayashi-H},  \eqref{Second-AP-V} and \eqref{V*}, we can easily show that $\widetilde{f}_{M}(x)=-\kappa dV_{*}(x)$ i.e. $t^{-1}\log t\,\widetilde{f}_{M}(xt^{-\frac{1}{2}})=V(x, t-1)$ with $\beta=1$. Therefore, Theorem~\ref{main.thm-2nd.AP} is a generalization of their results. 
}\end{rem}

In view of the second asymptotic profiles, we are able to investigate the optimality of the asymptotic rate to the nonlinear diffusion wave $\chi(x, t)$ (see Corollary \ref{main.cor} below). Actually, an upper bound and a lower bound in the $L^{\infty}$-norm of the functions $Z(x, t)$ and $V(x, t)$ are obtained in \cite{F19-2, F20}. More precisely, there are strictly positive constants $\nu_{0}$ and $\nu_{1}$ such that 
\begin{align}
\begin{split}\label{Z-estimate}
\|Z(\cdot, t)\|_{L^{\infty}} 
&\begin{cases}
\le C\max\left\{|c_{\alpha}^{+}|, |c_{\alpha}^{-}|\right\}(1+t)^{-\frac{\alpha}{2}},  \\[.2em]
\ge \nu_{0}|\mu_{0}|(1+t)^{-\frac{\alpha}{2}}, &
\end{cases}
1<\alpha<2,
\end{split}\\
\begin{split}\label{Z+V-estimate}
\|Z(\cdot, t)+V(\cdot, t)\|_{L^{\infty}} 
&\begin{cases}
\le C(\max\left\{|c_{\alpha}^{+}|, |c_{\alpha}^{-}|\right\}+|\kappa d|\|V_{*}(\cdot)\|_{L^{\infty}})(1+t)^{-1}\log(1+t), & \\
\ge \nu_{1}|\mu_{1}|(1+t)^{-1}\log(1+t), &
\end{cases}
\alpha=2
\end{split}
\end{align}
hold for sufficiently large $t>0$, where $C$ is some positive constant and $\mu_0$ and $\mu_1$ are real constants given by
\begin{equation}
\mu_{0}:=\left(c_{\alpha}^{+}-c_{\alpha}^{-}\right)\Gamma\biggl(\frac{3-\alpha}{2}\biggl)+\frac{\left(c_{\alpha}^{+}+c_{\alpha}^{-}\right)\beta\chi_{*}(0)}{2-\alpha}\Gamma\biggl(2-\frac{\alpha}{2}\biggl), \quad
\mu_{1}:=\frac{c_{\alpha}^{+}+c_{\alpha}^{-}}{2}-\kappa d. 
\end{equation}
Here, $\Gamma$ means the Gamma function. On the other hand, from the definition of $V(x, t)$, i.e. \eqref{Second-AP-V}, we can easily have  
\begin{equation}\label{V-estimate}
\|V(\cdot, t)\|_{L^{\infty}}=|\kappa d|\|V_{*}(\cdot)\|_{L^{\infty}}(1+t)^{-1}\log(1+t).  
\end{equation}
Therefore, by using \eqref{main.thm-2nd.AP-1} through \eqref{main.thm-2nd.AP-3}, \eqref{Z-estimate}, \eqref{Z+V-estimate} and \eqref{V-estimate}, we have the following optimal estimates for $u-\chi$: 
\begin{cor}\label{main.cor}
Under the same assumptions in Theorem~\ref{main.thm-2nd.AP}, if $\mu_{0}\neq0$, $\mu_{1}\neq0$, $\kappa \neq0$ and $M\neq 0$, then we have
\begin{align}
\|u(\cdot, t)-\chi(\cdot, t)\|_{L^{\infty}} \sim \begin{cases}
(1+t)^{-\frac{\alpha}{2}}, &1<\alpha<2, \\
(1+t)^{-1}\log(1+t), &\alpha \ge2
\end{cases} 
\end{align}
for sufficiently large $t>0$.
\end{cor}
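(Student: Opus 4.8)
The plan is to read off Corollary~\ref{main.cor} directly from Theorem~\ref{main.thm-2nd.AP} with $l=0$, combined with the two‑sided bounds \eqref{Z-estimate}, \eqref{Z+V-estimate} and \eqref{V-estimate}: the upper estimates come from the ordinary triangle inequality and the lower estimates from the reverse triangle inequality, after which we only need to check that the leading coefficients do not vanish.

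For $1<\alpha<2$, write $u-\chi=Z+(u-\chi-Z)$. By \eqref{main.thm-2nd.AP-1} with $l=0$ the remainder satisfies $\|u(\cdot,t)-\chi(\cdot,t)-Z(\cdot,t)\|_{L^{\infty}}=o((1+t)^{-\alpha/2})$. The upper bound in \eqref{Z-estimate} then gives $\|u(\cdot,t)-\chi(\cdot,t)\|_{L^{\infty}}\le \|Z(\cdot,t)\|_{L^{\infty}}+o((1+t)^{-\alpha/2})\le C(1+t)^{-\alpha/2}$. Since $\mu_{0}\neq0$ by hypothesis, the lower bound in \eqref{Z-estimate} yields $\|u(\cdot,t)-\chi(\cdot,t)\|_{L^{\infty}}\ge \|Z(\cdot,t)\|_{L^{\infty}}-o((1+t)^{-\alpha/2})\ge \nu_{0}|\mu_{0}|(1+t)^{-\alpha/2}-o((1+t)^{-\alpha/2})\ge \tfrac12\nu_{0}|\mu_{0}|(1+t)^{-\alpha/2}$ for $t$ large. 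Hence $\|u-\chi\|_{L^{\infty}}\sim(1+t)^{-\alpha/2}$. For $\alpha=2$, the same argument applies with $Z$ replaced by $Z+V$: \eqref{main.thm-2nd.AP-2} with $l=0$ says the remainder is $o((1+t)^{-1}\log(1+t))$, and \eqref{Z+V-estimate} together with $\mu_{1}\neq0$ furnishes matching upper and lower bounds of order $(1+t)^{-1}\log(1+t)$.

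For $\alpha>2$, write $u-\chi=V+(u-\chi-V)$. By \eqref{main.thm-2nd.AP-3} with $l=0$ the remainder is $O((1+t)^{-1})=o((1+t)^{-1}\log(1+t))$, and \eqref{V-estimate} gives $\|u(\cdot,t)-\chi(\cdot,t)\|_{L^{\infty}}=|\kappa d|\,\|V_{*}(\cdot)\|_{L^{\infty}}(1+t)^{-1}\log(1+t)+O((1+t)^{-1})$, so both bounds of order $(1+t)^{-1}\log(1+t)$ follow once $\kappa d\neq0$ and $V_{*}\not\equiv0$.

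The only points needing a short verification are the non‑degeneracy of $d$ and of $\|V_{*}\|_{L^{\infty}}$. Both are consequences of $M\neq0$: the denominator in \eqref{diffusion-wave*} is strictly positive for every $x\in\R$ (because $\int_{x/2}^{\infty}e^{-y^{2}}dy\in(0,\sqrt{\pi})$ and $e^{\beta M/2}-1>-1$), so $\chi_{*}$ never changes sign and is not identically zero; therefore $d=\int_{\R}\eta_{*}(y)^{-1}\chi_{*}(y)^{3}\,dy\neq0$, and by \eqref{V*} the function $V_{*}$ is not identically zero. Together with the standing assumption $\kappa\neq0$ (see \eqref{kappa}) this gives $\kappa d\neq0$ and closes the case $\alpha>2$. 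There is no serious obstacle in this argument — the corollary is essentially bookkeeping built on Theorem~\ref{main.thm-2nd.AP} and the already‑established sharp estimates for $Z$, $V$ and $Z+V$; the only mild subtlety is keeping track of which remainder term is $o(\cdot)$ of which principal term in each range of $\alpha$.
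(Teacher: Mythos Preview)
Your argument is correct and is essentially the same route the paper indicates: deduce the two-sided bounds from \eqref{main.thm-2nd.AP-1}--\eqref{main.thm-2nd.AP-3} with $l=0$ together with \eqref{Z-estimate}, \eqref{Z+V-estimate}, \eqref{V-estimate} via the (reverse) triangle inequality. Your extra check that $d\neq0$ and $V_{*}\not\equiv0$ under $M\neq0$ is a welcome detail the paper leaves implicit.
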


The rest of this paper is organized as follows. In Section~2, we prove the global existence and the $L^{p}$-decay estimates for the solutions to \eqref{BBMB}. In Section~3, first we prepare a couple of basic lemmas to prove our main theorems. After that, we prove Theorem~\ref{main.thm-1st.AP}, i.e. we show that the first asymptotic profile of the solutions to \eqref{BBMB} is given by the nonlinear diffusion wave $\chi(x, t)$. Finally, we give the proof of Theorem~\ref{main.thm-2nd.AP} in Section~4. This section is divided into three subsections. Subsection~4.1, which is for an auxiliary problem, is used to analyze the second asymptotic profiles. We prove Theorem~\ref{main.thm-2nd.AP} in Subsections~4.2 and 4.3. Subsection~4.2 is for the case of $1<\alpha <2$ and Subsection~4.3 is for the case of $\alpha=2$ and $\alpha>2$. Especially, the critical case $\alpha=2$ is the most difficult to treat, because it is necessary to consider effects both of the linear part and the Duhamel term with some higher-order derivatives.
\medskip

\par\noindent
\textbf{\bf{Notations.}} We define the Fourier transform of $f$ and the inverse Fourier transform of $g$ as follows:
\begin{align*}
\hat{f}(\xi):=\mathcal{F}[f](\xi)=\frac{1}{\sqrt{2\pi}}\int_{\R}e^{-ix\xi}f(x)dx, \quad 
\check{g}(x):=\mathcal{F}^{-1}[g](x)=\frac{1}{\sqrt{2\pi}}\int_{\R}e^{ix\xi}g(\xi)d\xi.
\end{align*}
For $1\le p \le \infty$, $L^{p}(\R)$ denotes the usual Lebesgue spaces. Also, we define a weighted Lebesgue space $L_{1}^{1}(\R)$ as follows 
\begin{equation*}
L^{1}_{1}(\R):=\left\{ f \in L^{1}(\R); \ \|f\|_{L^{1}_{1}}:=\int_{\R}|f(x)|(1+|x|)dx<\infty \right\}.
\end{equation*}
For a function $f\in L^{\infty}(\R)$ satisfying \eqref{data}, we define a norm $\|f\|_{L^{\infty}_{\alpha}}:=\text{ess.sup}_{x\in \R}|f(x)|(1+|x|)^{\alpha}$. 
\smallskip

For $1\le p\le \infty$ and an integer $m\ge 0$, we define the Sobolev spaces by 
\begin{equation*}
W^{m,p}(\R):=\left\{ f\in L^{p}(\R); \ \|f\|_{W^{m,p}}:=\left(\sum_{n=0}^{m}\left\| \p_{x}^{n}f\right\|_{L^{p}}^{p}\right)^{\frac{1}{p}}<\infty \right\}.
\end{equation*}
In a particular case $p=2$, we write $W^{m,2}(\R)=H^{m}(\R)$ with $\|f\|_{W^{m,p}}=\|f\|_{H^{m}}$. 
\smallskip

Let $I\subseteq [0, \infty)$ be an interval and $B$ be a Banach space. Then, for any non-negative integer $k\ge0$, $C^{k}(I; B)$ denotes the space of $B$-valued $k$-times continuously differentiable functions on $I$.
\smallskip

Throughout this paper, $C$ denotes various positive constants, which may vary from line to line during computations. Also, it may depend on the norm of the initial data. However, we note that it does not depend on the space variable $x$ and the time variable $t$. 

\smallskip
Finally, for positive functions $f(t)$ and $g(t)$ on time interval $I$, we write $f(t)\sim g(t)$ if there exist positive constants $c_{0}$ and $C_{0}$ independent of $t$ such that $c_{0}g(t)\le f(t)\le C_{0}g(t)$ holds. 


\section{Global Existence and Time Decay Estimates}

In this section, we shall prove the global existence and the time decay estimates for the solutions to the Cauchy problem~\eqref{BBMB} in Sobolev spaces. As was mentioned above, the global existence and appropriate time decay estimates in a lower order Sobolev space were already shown in \cite{ABS89, HKN07}. However, in order to analyze the second asymptotic profiles of the solution (Theorem \ref{main.thm-2nd.AP}), we need estimates in higher order Sobolev spaces. To obtain them, we modify the argument used in \cite{ABS89, HKN07} and generalize their results.

First, we consider the following linearized problem for \eqref{BBMB}: 
\begin{align*}
\tilde{u}_{t}-\tilde{u}_{xxt}-\tilde{u}_{xx}+\gamma \tilde{u}_{xxx}&=0, \ \ x\in \R, \ t>0,\\
\tilde{u}(x, 0)&=u_{0}(x) , \ \ x\in \R. 
\end{align*}
The solution to the above problem can be written by 
\[\tilde{u}(x, t)=(T(t)*u_{0})(x), \] 
where the integral kernel $T(x, t)$ is defined by 
\begin{equation}\label{linear-T}
T(x, t):=\mathcal{F}^{-1}\left[\frac{1}{\sqrt{2\pi}}\exp\left( \frac{-t\xi^{2}+i\gamma t\xi^{3} }{1+\xi^{2}}\right)\right](x). 
\end{equation}
For this solution to the linearized problem, we can show the following estimate: 
\begin{lem}\label{lem.linear-est}
Let $s$ be a non-negative integer. Suppose $f\in H^{s}(\R)\cap L^{1}(\R)$. Then, the estimate
\begin{equation}\label{linear-est}
\left\| \p^{l}_{x}T(t)*f\right\|_{L^{2}} \le C(1+t)^{-\frac{1}{4}-\frac{l}{2}}\|f\|_{L^{1}}+e^{-\frac{t}{2}}\left\| \p^{l}_{x}f\right\|_{L^{2}}, \ \ t\ge0
\end{equation}
holds for any integer $l$ satisfying $0\le l \le s$.
\end{lem}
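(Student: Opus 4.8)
The estimate \eqref{linear-est} is a standard linear decay estimate obtained by splitting the Fourier symbol into low- and high-frequency parts. The plan is to work on the Fourier side using Plancherel's theorem, writing
\[
\left\| \p^{l}_{x}T(t)*f\right\|_{L^{2}}^{2} = \int_{\R}|\xi|^{2l}\exp\left(\frac{-2t\xi^{2}}{1+\xi^{2}}\right)|\hat{f}(\xi)|^{2}\,d\xi,
\]
and then to estimate the integral over $|\xi|\le 1$ and $|\xi|\ge 1$ separately.

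\textbf{Low-frequency part.} For $|\xi|\le 1$ we have $\frac{\xi^{2}}{1+\xi^{2}}\ge \frac{\xi^{2}}{2}$, so the symbol is bounded by $|\xi|^{2l}e^{-t\xi^{2}}$. Using $\|\hat f\|_{L^{\infty}}\le C\|f\|_{L^{1}}$ and the change of variables $\xi\mapsto \xi/\sqrt{t}$ (for $t\ge 1$; the case $0\le t\le 1$ is trivial since the exponential is bounded by $1$ and $|\xi|^{2l}\le 1$), one gets
\[
\int_{|\xi|\le 1}|\xi|^{2l}e^{-t\xi^{2}}|\hat f(\xi)|^{2}\,d\xi \le C\|f\|_{L^{1}}^{2}\int_{\R}|\xi|^{2l}e^{-t\xi^{2}}\,d\xi \le C(1+t)^{-\frac{1}{2}-l}\|f\|_{L^{1}}^{2},
\]
which yields the first term on the right-hand side of \eqref{linear-est} after taking square roots.

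\textbf{High-frequency part.} For $|\xi|\ge 1$ we have $\frac{\xi^{2}}{1+\xi^{2}}\ge \frac{1}{2}$, so $\exp\left(\frac{-2t\xi^{2}}{1+\xi^{2}}\right)\le e^{-t}$, giving
\[
\int_{|\xi|\ge 1}|\xi|^{2l}\exp\left(\frac{-2t\xi^{2}}{1+\xi^{2}}\right)|\hat f(\xi)|^{2}\,d\xi \le e^{-t}\int_{\R}|\xi|^{2l}|\hat f(\xi)|^{2}\,d\xi = e^{-t}\left\|\p_{x}^{l}f\right\|_{L^{2}}^{2},
\]
and $e^{-t/2}=\sqrt{e^{-t}}$ produces the second term on the right-hand side of \eqref{linear-est}. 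Combining the two bounds via $\sqrt{a+b}\le\sqrt{a}+\sqrt{b}$ and adjusting constants finishes the argument.

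\textbf{Main obstacle.} There is no serious obstacle here; the only point requiring a little care is handling the imaginary part $i\gamma t\xi^{3}/(1+\xi^{2})$ of the exponent, but since $|e^{i\theta}|=1$ for real $\theta$ it contributes nothing to the modulus of the symbol and may simply be discarded at the very first step. The splitting at $|\xi|=1$ is natural because it is exactly where the denominator $1+\xi^{2}$ transitions between being comparable to $1$ and being comparable to $\xi^{2}$; one could also split at any fixed radius with only cosmetic changes to the constants.
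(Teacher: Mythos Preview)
Your proposal is correct and follows essentially the same approach as the paper: Plancherel, split at $|\xi|=1$, use $\exp(-2t\xi^{2}/(1+\xi^{2}))\le e^{-t\xi^{2}}$ for $|\xi|\le 1$ together with $\|\hat f\|_{L^{\infty}}\le C\|f\|_{L^{1}}$ for the low-frequency piece, and $\exp(-2t\xi^{2}/(1+\xi^{2}))\le e^{-t}$ for the high-frequency piece. The only cosmetic difference is that the paper handles the small-time behavior by writing $e^{-t\xi^{2}}=e^{\xi^{2}}e^{-(1+t)\xi^{2}}$ on $|\xi|\le 1$ rather than by a case split $t\le 1$ versus $t\ge 1$, but the two devices are equivalent.
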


\begin{rem}
{\rm 
A similar estimate is  given by Karch \cite{K99-1}. The different point is that our estimate improves his estimate near the origin $t=0$. 
}
\end{rem}

\begin{proof}
By using Plancherel's theorem, we have
\begin{align*}
\begin{split}
\left\|\p_{x}^{l}T(t)*f\right\|_{L^{2}}^{2}&=\left\|\exp\left( \frac{-t\xi^{2}+i\gamma t\xi^{3} }{1+\xi^{2}}\right)(i\xi)^{l}\hat{f}(\xi)\right\|_{L^{2}}^{2} \\
&=\left(\int_{|\xi|\le1}+\int_{|\xi|\ge1}\right)\exp\left( \frac{-2t\xi^{2}}{1+\xi^{2}}\right)\left|(i\xi)^{l}\hat{f}(\xi)\right|^{2}d\xi \\
&=:I_{1}+I_{2}.
\end{split}
\end{align*}
First, we evaluate $I_{1}$. Since $\exp\left( \frac{-2t\xi^{2}}{1+\xi^{2}}\right)\le e^{-t\xi^{2}}$ for all $|\xi|\le 1$, we have
\begin{align}
\label{linear-1}
\begin{split}
I_{1}&=\int_{|\xi|\le1}\exp\left( \frac{-2t\xi^{2}}{1+\xi^{2}}\right)\xi^{2l}|\hat{f}(\xi)|^{2}d\xi 
\le \int_{|\xi|\le 1}e^{-t\xi^{2}}\xi^{2l}|\hat{f}(\xi)|^{2}d\xi  \\
&\le 2\left(\sup_{|\xi| \le1}|\hat{f}(\xi)|\right)^{2}\int_{0}^{1}e^{-t\xi^{2}}\xi^{2l}d\xi  
=2\left(\sup_{|\xi| \le1}|\hat{f}(\xi)|\right)^{2}\int_{0}^{1}e^{\xi^{2}}e^{-(1+t)\xi^{2}}\xi^{2l}d\xi   \\
&\le 2e\left(\frac{\|f\|_{L^{1}}}{\sqrt{2\pi}}\right)^{2}\int_{0}^{1}e^{-(1+t)\xi^{2}}\xi^{2l}d\xi \le C(1+t)^{-\frac{1}{2}-l}\|f\|_{L^{1}}^{2}. 
\end{split}
\end{align}
Next, we evaluate $I_{2}$. By Plancherel's theorem, we have
\begin{align}
\label{linear-2}
\begin{split}
I_{2}\le \left(\sup_{|\xi|\ge1}\exp\left( \frac{-2t\xi^{2}}{1+\xi^{2}}\right)\right)\int_{|\xi|\ge1}\left|(i\xi)^{l}\hat{f}(\xi)\right|^{2}d\xi
\le e^{-t}\int_{\R}\left|\wh{\p_{x}^{l}f}(\xi)\right|^{2}d\xi 
=e^{-t}\left\|\p_{x}^{l}f\right\|_{L^{2}}^{2}.
\end{split}
\end{align}
From \eqref{linear-1} and \eqref{linear-2}, we obtain \eqref{linear-est}. This completes the proof. 
\end{proof}

Next, for the latter sake, we define the following nonlocal operator: 
\begin{equation}\label{nonlocal-op}
(1-\p_{x}^{2})^{-1}g(x):=\mathcal{F}^{-1}\left[\frac{1}{1+\xi^{2}}\hat{g}(\xi)\right](x)=\frac{1}{2}\int_{\R}e^{-|x-y|}g(y)dy=\frac{1}{2}(e^{-|\cdot|}*g)(x).
\end{equation}
From this definition, the following embedding theorem immediately follows.
\begin{lem}\label{lem.embedding}
Let $1\le p\le \infty$ and suppose $g\in L^{p}(\R)$. Then, we have 
\begin{equation}\label{est-embedding}
\left\| (1-\p_{x}^{2})^{-1}g\right\|_{L^{p}} \le C\|g\|_{L^{p}}. 
\end{equation}
\end{lem}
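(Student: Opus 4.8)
\textbf{Proof plan for Lemma~\ref{lem.embedding}.}
The plan is to use the explicit convolution representation of the nonlocal operator given in \eqref{nonlocal-op}, namely $(1-\p_{x}^{2})^{-1}g=\tfrac12(e^{-|\cdot|}*g)$, together with Young's convolution inequality. First I would recall that the kernel $k(x):=\tfrac12 e^{-|x|}$ is a nonnegative function in $L^{1}(\R)$ with $\|k\|_{L^{1}}=\tfrac12\int_{\R}e^{-|x|}\,dx=1$. Then, for any $1\le p\le\infty$ and $g\in L^{p}(\R)$, Young's inequality in the form $\|k*g\|_{L^{p}}\le\|k\|_{L^{1}}\|g\|_{L^{p}}$ gives immediately
\[
\left\|(1-\p_{x}^{2})^{-1}g\right\|_{L^{p}}=\left\|k*g\right\|_{L^{p}}\le\|k\|_{L^{1}}\|g\|_{L^{p}}=\|g\|_{L^{p}},
\]
which is \eqref{est-embedding} with $C=1$ (any $C\ge1$ works).

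The only point requiring a word of justification is that the two representations of $(1-\p_{x}^{2})^{-1}g$ in \eqref{nonlocal-op} agree; this is a standard fact, since $\mathcal{F}[\tfrac12 e^{-|\cdot|}](\xi)=\tfrac{1}{\sqrt{2\pi}}\,\tfrac{1}{1+\xi^{2}}$, so that $\mathcal{F}[\tfrac12 e^{-|\cdot|}*g](\xi)=\tfrac{1}{1+\xi^{2}}\hat g(\xi)$. For $p=2$ one can alternatively argue directly by Plancherel's theorem, bounding $\tfrac{1}{1+\xi^{2}}\le1$; but the convolution argument has the advantage of covering all $p\in[1,\infty]$ uniformly, which is what is needed later.

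There is essentially no obstacle here: the lemma is an immediate consequence of Young's inequality once the kernel is identified, and the constant is in fact $1$. The slight subtlety — if one wants to be careful — is that Young's inequality for $k\in L^1$, $g\in L^p$ produces a well-defined element of $L^p$ for a.e.\ $x$, which is exactly the sense in which $(1-\p_{x}^{2})^{-1}g$ is defined for merely $L^p$ data; no smoothness of $g$ is assumed or needed.
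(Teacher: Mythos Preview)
Your proof is correct and matches the paper's approach: the paper does not give a detailed argument but simply remarks that the lemma ``immediately follows'' from the convolution representation \eqref{nonlocal-op}, which is precisely the Young's inequality argument you carry out. Your observation that the constant can be taken equal to $1$ is a nice sharpening.
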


In addition to Lemma~\ref{lem.linear-est}, the following estimate is one of the key to prove the global existence and the time decay estimates for the solutions to \eqref{BBMB}.  
\begin{lem}\label{lem.Duhamel-est}
Let $s\ge1$ be an integer and suppose $g\in C^{0}((0, \infty); H^{s}(\R)) \cap C^{0}((0, \infty); W^{s, 1}(\R))$. Then, the estimate
\begin{align}\label{Duhamel-est}
\begin{split}
&\left\|\p_{x}^{l}\int_{0}^{t}(\p_{x}T(t-\tau))*\left((1-\p^{2}_{x})^{-1}g\right)(\tau)d\tau\right\|_{L^{2}} \\
&\le C\int_{0}^{t/2}(1+t-\tau)^{-\frac{3}{4}-\frac{l}{2}}\|g(\cdot, \tau)\|_{L^{1}}d\tau
+C\int_{t/2}^{t}(1+t-\tau)^{-\frac{3}{4}}\left\|\p_{x}^{l}g(\cdot, \tau)\right\|_{L^{1}}d\tau \\
&\ \ \ +C\left(\int_{0}^{t}e^{-\frac{t-\tau}{2}} \left\|\p_{x}^{l}g(\cdot, \tau)\right\|_{L^{2}}^{2}d\tau \right)^{\frac{1}{2}}, \ \ t>0
\end{split}
\end{align}
holds for any integer $l$ satisfying $0\le l \le s$, where $T(x, t)$ is defined by \eqref{linear-T}. 
\end{lem}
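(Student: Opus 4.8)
The plan is to estimate the quantity on the left of \eqref{Duhamel-est} by combining the linear decay estimate from Lemma~\ref{lem.linear-est} with the smoothing effect of the nonlocal operator $(1-\p_x^2)^{-1}$ from Lemma~\ref{lem.embedding}, splitting the time integral at $\tau = t/2$. The key preliminary observation is that, by Plancherel's theorem, one has for the convolution kernel the pointwise Fourier bound
\[
\left|\exp\left(\frac{-t\xi^2+i\gamma t\xi^3}{1+\xi^2}\right)\frac{i\xi}{1+\xi^2}\right|
= \exp\left(\frac{-t\xi^2}{1+\xi^2}\right)\frac{|\xi|}{1+\xi^2},
\]
so that $\p_x T(t)*(1-\p_x^2)^{-1}g$ is, in frequency, the symbol $\exp(-t\xi^2/(1+\xi^2))\,i\xi/(1+\xi^2)$ times $\hat g$. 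One then wants a refined $L^2$-bound of the form
\[
\left\|\p_x^{l}\,\p_x T(t)*(1-\p_x^2)^{-1}g\right\|_{L^2}
\le C(1+t)^{-\frac34-\frac{l}{2}}\|g\|_{L^1} + Ce^{-t/2}\left\|\p_x^{l}g\right\|_{L^2}, \qquad t>0,
\]
which is proved exactly as Lemma~\ref{lem.linear-est}: split $\int_{|\xi|\le 1}+\int_{|\xi|\ge 1}$. On the low-frequency piece, use $\exp(-2t\xi^2/(1+\xi^2))\le e^{-t\xi^2}$, bound $\sup_{|\xi|\le 1}|\hat g(\xi)|\le \|g\|_{L^1}/\sqrt{2\pi}$, and use the extra factor $\xi^2/(1+\xi^2)^2 \le \xi^2$ coming from the operator $\p_x(1-\p_x^2)^{-1}$; the integral $\int_0^1 e^{-(1+t)\xi^2}\xi^{2l+2}\,d\xi \le C(1+t)^{-\frac32-l}$ produces the gain of $3/4$ in the decay rate. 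On the high-frequency piece, $\exp(-2t\xi^2/(1+\xi^2))\le e^{-t}$ for $|\xi|\ge 1$, the symbol $|\xi|/(1+\xi^2)$ is bounded, and Plancherel gives the $e^{-t/2}\|\p_x^{l}g\|_{L^2}$ term.

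With this single-time bound in hand, I would apply the Duhamel/Minkowski inequality to the time integral:
\[
\left\|\p_x^{l}\int_0^t (\p_x T(t-\tau))*\bigl((1-\p_x^2)^{-1}g\bigr)(\tau)\,d\tau\right\|_{L^2}
\le \int_0^t \left\|\p_x^{l}\,\p_x T(t-\tau)*(1-\p_x^2)^{-1}g(\tau)\right\|_{L^2}\,d\tau,
\]
and then split $\int_0^t = \int_0^{t/2}+\int_{t/2}^t$. On $[0,t/2]$ I would put all $l$ derivatives on the kernel, obtaining the first term $\int_0^{t/2}(1+t-\tau)^{-\frac34-\frac{l}{2}}\|g(\cdot,\tau)\|_{L^1}\,d\tau$ from the $L^1$ part, while the exponential part contributes $\int_0^{t/2}e^{-(t-\tau)/2}\|\p_x^{l}g(\cdot,\tau)\|_{L^2}\,d\tau$, which by Cauchy--Schwarz (using $\int_0^t e^{-(t-\tau)/2}\,d\tau \le 2$) is dominated by $C(\int_0^t e^{-(t-\tau)/2}\|\p_x^{l}g(\cdot,\tau)\|_{L^2}^2\,d\tau)^{1/2}$, i.e. the third term. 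On $[t/2,t]$ I would instead move the $l$ derivatives onto $g$ (keeping only the single $\p_x$ from the convolution on the kernel), giving $\int_{t/2}^t (1+t-\tau)^{-\frac34}\|\p_x^{l}g(\cdot,\tau)\|_{L^1}\,d\tau$ from the $L^1$ part, and again the high-frequency part is absorbed into the third term via Cauchy--Schwarz. Collecting the pieces yields \eqref{Duhamel-est}.

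I expect the only genuinely delicate point to be the bookkeeping of where the derivatives $\p_x^{l}$ are placed in the two time regimes — on the kernel near $\tau=0$ (to exploit parabolic smoothing and keep $g$ in $L^1$ without extra regularity) and on $g$ near $\tau=t$ (where $t-\tau$ is small and the kernel gives no decay from differentiation) — and, relatedly, in checking that the symbol $|\xi|/(1+\xi^2)$ really supplies the factor $\xi^2$ needed in the low-frequency integral to upgrade the rate from $-\frac14$ to $-\frac34$. The continuity hypothesis $g\in C^0((0,\infty);H^s)\cap C^0((0,\infty);W^{s,1})$ is used only to make the Bochner integral and the Minkowski inequality legitimate. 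Everything else is a routine repetition of the computation in the proof of Lemma~\ref{lem.linear-est}.
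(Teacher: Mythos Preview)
Your proposal is correct and reaches \eqref{Duhamel-est}, but the order of operations differs from the paper's in one respect worth noting. You first prove a single-time estimate
\[
\left\|\p_x^{l}\,\p_x T(t)*(1-\p_x^2)^{-1}g\right\|_{L^2}
\le C(1+t)^{-\frac34-\frac{l}{2}}\|g\|_{L^1} + Ce^{-t/2}\|\p_x^{l}g\|_{L^2},
\]
then apply Minkowski in $\tau$ to the Duhamel integral, and finally convert $\int_0^t e^{-(t-\tau)/2}\|\p_x^{l}g(\tau)\|_{L^2}\,d\tau$ into the square-root term via Cauchy--Schwarz in $\tau$. The paper instead takes Plancherel on the full Duhamel integral and splits $|\xi|\le 1$ and $|\xi|\ge 1$ at the outset; on the low-frequency piece it does exactly your Minkowski-and-split-at-$t/2$ argument, but on the high-frequency piece it applies Schwarz in $\tau$ \emph{pointwise in $\xi$} before integrating in $\xi$, producing the bounded factor $\bigl((1+\xi^2)/\xi^2\bigr)\bigl(1-e^{-t\xi^2/(1+\xi^2)}\bigr)$ and arriving at the third term of \eqref{Duhamel-est} directly, without the extra Cauchy--Schwarz step. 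Your route is more modular (the single-time bound is reusable), while the paper's route is marginally sharper in constants and perhaps more standard for this type of Duhamel estimate; both are equally valid here.
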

\begin{proof}
For simplicity, we set 
\begin{equation}\label{Duhamel-I}
I(x, t):=\int_{0}^{t}(\p_{x}T(t-\tau))*\left((1-\p_{x}^{2})^{-1}g\right)(\tau)d\tau. 
\end{equation}
By using Plancherel's theorem and splitting the integral, we have
\begin{align}
\label{I-split}
\begin{split}
\left\|\p_{x}^{l}I(\cdot, t)\right\|_{L^{2}} &\le \left\|(i\xi)^{l}\hat{I}(\xi, t)\right\|_{L^{2}(|\xi|\le1)}+\left\|(i\xi)^{l}\hat{I}(\xi, t)\right\|_{L^{2}(|\xi|\ge1)}=:I_{1}+I_{2}. 
\end{split}
\end{align}
Then, applying Lemma~\ref{lem.embedding}, similarly as \eqref{linear-1}, we obtain
\begin{align}
\label{Duhamel-I1-est}
\begin{split}
I_{1}&\le C\int_{0}^{t}\left\|(i\xi)^{l+1}\exp\left(\frac{-(t-\tau)\xi^{2}+i\gamma(t-\tau)\xi^{3}}{1+\xi^{2}}\right)\mathcal{F}\left[(1-\p_{x}^{2})^{-1}g\right](\xi, \tau)\right\|_{L^{2}(|\xi|\le1)}d\tau\\
&\le C\int_{0}^{t/2}\sup_{|\xi|\le1}\left|\mathcal{F}\left[(1-\p_{x}^{2})^{-1}g\right](\xi, \tau)\right|\left(\int_{|\xi|\le1}\xi^{2(l+1)}\exp\left(-\frac{2(t-\tau)\xi^{2}}{1+\xi^{2}}\right)d\xi \right)^{\frac{1}{2}}d\tau\\
&\ \ \ +C\int_{t/2}^{t}\sup_{|\xi|\le1}\left|(i\xi)^{l}\mathcal{F}\left[(1-\p_{x}^{2})^{-1}g\right](\xi, \tau)\right|\left(\int_{|\xi|\le1}\xi^{2}\exp\left(-\frac{2(t-\tau)\xi^{2}}{1+\xi^{2}}\right)d\xi \right)^{\frac{1}{2}}d\tau\\
&\le C\int_{0}^{t/2}\left\|(1-\p_{x}^{2})^{-1}g(\cdot, \tau)\right\|_{L^{1}}
\left(\int_{|\xi|\le1}\xi^{2(l+1)}e^{-(t-\tau)\xi^{2}}d\xi \right)^{\frac{1}{2}}d\tau \\
&\ \ \ +C\int_{t/2}^{t}\left\|(1-\p_{x}^{2})^{-1}\p_{x}^{l}g(\cdot, \tau)\right\|_{L^{1}}
\left(\int_{|\xi|\le1}\xi^{2}e^{-(t-\tau)\xi^{2}}d\xi \right)^{\frac{1}{2}}d\tau \\
&\le C\int_{0}^{t/2}(1+t-\tau)^{-\frac{3}{4}-\frac{l}{2}}\|g(\cdot, \tau)\|_{L^{1}}d\tau
+C\int_{t/2}^{t}(1+t-\tau)^{-\frac{3}{4}}\left\|\p_{x}^{l}g(\cdot, \tau)\right\|_{L^{1}}d\tau.
\end{split}
\end{align}
Here, we have used the following fact: 
\begin{equation*}
\int_{|\xi|\le1}|\xi|^{j}e^{-(t-\tau)|\xi|^{2}}d\xi \le C(1+t-\tau)^{-\frac{j}{2}-\frac{1}{2}}, \ \ j\ge0. 
\end{equation*}
Next, for $|\xi|\ge1$, by using Schwarz's inequality and \eqref{nonlocal-op}, we have 
\begin{align*}
\begin{split}
\left|(i\xi)^{l}\hat{I}(\xi, t)\right|
&=\left|(i\xi)^{l+1}\int_{0}^{t}\exp\left(\frac{-(t-\tau)\xi^{2}+i\gamma(t-\tau)\xi^{3}}{1+\xi^{2}}\right)\mathcal{F}\left[(1-\p_{x}^{2})^{-1}g\right](\xi, \tau)d\tau\right| \\
&\le \int_{0}^{t}\exp\left(-\frac{(t-\tau)\xi^{2}}{2(1+\xi^{2})}\right)\exp\left(-\frac{(t-\tau)\xi^{2}}{2(1+\xi^{2})}\right)\left|(i\xi)^{l+1}\mathcal{F}\left[(1-\p_{x}^{2})^{-1}g\right](\xi, \tau)\right|d\tau \\
&\le \left(\int_{0}^{t}\exp\left(-\frac{(t-\tau)\xi^{2}}{1+\xi^{2}}\right)d\tau \right)^{\frac{1}{2}}
\left(\int_{0}^{t}\exp\left(-\frac{(t-\tau)\xi^{2}}{1+\xi^{2}}\right)\left|\frac{(i\xi)^{l+1}}{1+\xi^{2}}\hat{g}(\xi, \tau)\right|^{2}d\tau \right)^{\frac{1}{2}} \\
&=\left\{\left(\frac{1+\xi^{2}}{\xi^{2}}\right)\left(1-\exp\left(\frac{-t\xi^{2}}{1+\xi^{2}}\right)\right)\right\}^{\frac{1}{2}} \\
&\ \ \ \times \left( \int_{0}^{t} \exp\left(-\frac{(t-\tau)\xi^{2}}{1+\xi^{2}}\right)  \frac{\xi^{2}}{1+\xi^{2}}\frac{1}{1+\xi^{2}} \left|(i\xi)^{l}\hat{g}(\xi, \tau)\right|^{2}d\tau \right)^{\frac{1}{2}}\\
&\le C\left(\int_{0}^{t}\exp\left(-\frac{(t-\tau)\xi^{2}}{1+\xi^{2}}\right) \left|(i\xi)^{l}\hat{g}(\xi, \tau)\right|^{2}d\tau \right)^{\frac{1}{2}}.
\end{split}
\end{align*}
Therefore, similarly as \eqref{linear-2}, we can see that 
\begin{align}
\label{Duhamel-I2-est}
\begin{split}
I_{2}&\le C\left(\int_{|\xi|\ge1}\int_{0}^{t}\exp\left(-\frac{(t-\tau)\xi^{2}}{1+\xi^{2}}\right) \left|(i\xi)^{l}\hat{g}(\xi, \tau)\right|^{2}d\tau d\xi\right)^{\frac{1}{2}} \\
&\le C\left(\int_{0}^{t}e^{-\frac{t-\tau}{2}}\int_{|\xi|\ge1}\left|(i\xi)^{l}\hat{g}(\xi, \tau)\right|^{2}d\xi d\tau \right)^{\frac{1}{2}} \\
&\le C\left(\int_{0}^{t}e^{-\frac{t-\tau}{2}}\left\|\p_{x}^{l}g(\cdot, \tau)\right\|_{L^{2}}^{2}d\tau \right)^{\frac{1}{2}}. 
\end{split}
\end{align}
Summarizing up \eqref{Duhamel-I} through \eqref{Duhamel-I2-est}, we obtain \eqref{Duhamel-est}. This completes the proof. 
\end{proof}

Now, let us prove the global existence and appropriate decay estimates of the solutions to \eqref{BBMB}. We give the proof of the following theorem by modifying the proof of Proposition~2.3 in \cite{F19-2}. 
\begin{thm}
\label{thm.SDGE}
Let $s\ge1$ be an integer. Assume that $u_{0}\in H^{s}(\R)\cap L^{1}(\R)$ and $\|u_{0}\|_{H^{s}}+\|u_{0}\|_{L^{1}}=:E_{s}$ is sufficiently small. Then, \eqref{BBMB} has a unique global mild solution $u\in C^{0}([0, \infty); H^{s}(\R))$. Moreover, the solution satisfies
\begin{equation}
\label{u-est-L2}
\left\| \p^{l}_{x}u(\cdot, t)\right\|_{L^{2}}\le CE_{s}(1+t)^{-\frac{1}{4}-\frac{l}{2}}, \ \ t\ge0 
\end{equation}
for any integer $l$ satisfying $0\le l \le s$. Furthermore, 
\begin{equation}
\label{u-est-Linfinity}
\left\| \p^{l}_{x}u(\cdot, t)\right\|_{L^{\infty}}\le CE_{s}(1+t)^{-\frac{1}{2}-\frac{l}{2}}, \ \ t\ge0
\end{equation}
holds for any integer $l$ satisfying $0\le l \le s-1$. 
\end{thm}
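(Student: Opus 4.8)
The plan is to prove Theorem~\ref{thm.SDGE} by a standard contraction-mapping argument in a suitable complete metric space, combined with the linear and Duhamel estimates already established in Lemmas~\ref{lem.linear-est}, \ref{lem.embedding} and \ref{lem.Duhamel-est}. First I would rewrite \eqref{BBMB} in mild (integral) form. Applying $(1-\p_x^2)^{-1}$ to the equation, the nonlinearity $\beta u u_x = \frac{\beta}{2}\p_x(u^2)$ gets absorbed into the Duhamel term with exactly the structure $\p_x T(t-\tau)*((1-\p_x^2)^{-1}(\cdot))$ appearing in Lemma~\ref{lem.Duhamel-est}; concretely,
\begin{equation*}
u(t)=T(t)*u_0-\frac{\beta}{2}\int_0^t \p_x T(t-\tau)*\bigl((1-\p_x^2)^{-1}(u^2)\bigr)(\tau)\,d\tau.
\end{equation*}
I would then define the solution space
\begin{equation*}
X:=\Bigl\{u\in C^0([0,\infty);H^s(\R)) : \|u\|_X:=\sup_{t\ge0}\sum_{l=0}^{s}(1+t)^{\frac14+\frac{l}{2}}\|\p_x^l u(\cdot,t)\|_{L^2}<\infty\Bigr\},
\end{equation*}
and the closed ball $X_R:=\{u\in X:\|u\|_X\le R\}$ with $R=2CE_s$ for the constant $C$ coming from Lemma~\ref{lem.linear-est}, together with the natural metric induced by $\|\cdot\|_X$.

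The core of the argument is to show that the map $\Phi$ defined by the right-hand side above sends $X_R$ into itself and is a contraction there, provided $E_s$ (hence $R$) is small. For the linear part, Lemma~\ref{lem.linear-est} immediately gives $\|\p_x^l T(t)*u_0\|_{L^2}\le CE_s(1+t)^{-1/4-l/2}$, since the exponential factor $e^{-t/2}\|\p_x^l u_0\|_{L^2}$ is dominated by $(1+t)^{-1/4-l/2}$ up to a constant. For the Duhamel part, I would apply Lemma~\ref{lem.Duhamel-est} with $g=\frac{\beta}{2}u^2$ and then estimate each of the three resulting terms using the bilinear bounds $\|u^2(\tau)\|_{L^1}=\|u(\tau)\|_{L^2}^2\le CR^2(1+\tau)^{-1/2}$ and, via the Leibniz rule together with the Gagliardo--Nirenberg/Sobolev inequality $\|\p_x^j u\|_{L^\infty}\le C\|\p_x^j u\|_{L^2}^{1/2}\|\p_x^{j+1}u\|_{L^2}^{1/2}$ (valid for $j\le s-1$), the estimates $\|\p_x^l(u^2)(\tau)\|_{L^1}\le CR^2(1+\tau)^{-1/2-l/2}$ and $\|\p_x^l(u^2)(\tau)\|_{L^2}\le CR^2(1+\tau)^{-3/4-l/2}$. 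Plugging these into the three time integrals in \eqref{Duhamel-est} and carrying out the elementary integrations (splitting at $t/2$, using that $\int_0^{t/2}(1+t-\tau)^{-3/4-l/2}(1+\tau)^{-1/2}d\tau$ and its companions decay at rate $(1+t)^{-1/4-l/2}$, and that the exponentially-weighted $L^2$ integral contributes an even faster decay) yields $\|\p_x^l(\Phi u)(\cdot,t)\|_{L^2}\le CE_s(1+t)^{-1/4-l/2}+CR^2(1+t)^{-1/4-l/2}$, so $\|\Phi u\|_X\le CE_s+CR^2\le R$ once $E_s$ is small. The contraction estimate is identical after writing $u^2-v^2=(u+v)(u-v)$ and noticing the resulting bound is $\le CR\,\|u-v\|_X$, which is $<1$ for small $R$. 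Continuity in time, $\Phi u\in C^0([0,\infty);H^s)$, follows from dominated convergence applied to the integral representation together with strong continuity of the semigroup $T(t)$ on $H^s$; uniqueness in $X$ is a consequence of the contraction property (or a direct Gronwall argument on a finite interval). This establishes the global existence and the $L^2$-decay estimate \eqref{u-est-L2}.

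Finally, \eqref{u-est-Linfinity} follows for free from \eqref{u-est-L2}: for $0\le l\le s-1$ the Gagliardo--Nirenberg inequality $\|\p_x^l u\|_{L^\infty}\le C\|\p_x^l u\|_{L^2}^{1/2}\|\p_x^{l+1}u\|_{L^2}^{1/2}$ combined with \eqref{u-est-L2} gives
\begin{equation*}
\|\p_x^l u(\cdot,t)\|_{L^\infty}\le C\bigl(E_s(1+t)^{-\frac14-\frac{l}{2}}\bigr)^{\frac12}\bigl(E_s(1+t)^{-\frac14-\frac{l+1}{2}}\bigr)^{\frac12}=CE_s(1+t)^{-\frac12-\frac{l}{2}},
\end{equation*}
which is the claimed bound. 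I expect the main obstacle to be the careful bookkeeping of the time-weighted bilinear estimates for $\p_x^l(u^2)$ in both $L^1$ and $L^2$ — in particular making sure the highest-order term $l=s$ closes (there one only has $\|\p_x^s u\|_{L^2}$ available, so the Leibniz expansion of $\p_x^s(u^2)$ must pair the top derivative with a low-order factor estimated in $L^\infty$, which is exactly why the restriction $l\le s-1$ appears in the $L^\infty$ statement but not in the $L^2$ one) — and in checking that every time integral arising from \eqref{Duhamel-est} genuinely decays no slower than $(1+t)^{-1/4-l/2}$, with the borderline contributions near $\tau\approx t$ handled by the $L^1$-of-derivatives term rather than the $L^1$ term.
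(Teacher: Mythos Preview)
Your proposal is correct and follows essentially the same approach as the paper: the same mild formulation, the same weighted space $X$, contraction mapping on a ball of radius $2CE_s$, the same bilinear estimates for $\p_x^l(u^2)$ in $L^1$ and $L^2$ fed into Lemma~\ref{lem.Duhamel-est}, and the $L^\infty$ bound deduced from \eqref{u-est-L2} via the Sobolev inequality. The only cosmetic difference is that the paper writes the norm as $\sum_l\sup_t$ rather than $\sup_t\sum_l$ and proves the $L^1$ bilinear estimate by Schwarz ($L^2\times L^2$) rather than $L^\infty\times L^1$, but these are equivalent.
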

\begin{proof}
We consider the following integral equation associated with the Cauchy problem \eqref{BBMB}:
\begin{equation}
\label{integral-eq}
u(t)=T(t)*u_{0}-\frac{\beta}{2}\int_{0}^{t}(\p_{x}T(t-\tau))*\left((1-\p^{2}_{x})^{-1}(u^{2})\right)(\tau)d\tau. 
\end{equation}
We solve this integral equation by using the contraction mapping principle for the mapping 
\begin{equation}
\label{nonlinear-map}
N[u]:=T(t)*u_{0}-\frac{\beta}{2}\int_{0}^{t}(\p_{x}T(t-\tau))*\left((1-\p^{2}_{x})^{-1}(u^{2})\right)(\tau)d\tau. 
\end{equation}
Let us introduce the Banach space $X$ below 
\begin{equation}
\label{space-X}
X:=\left\{u\in C^{0}([0, \infty); H^{s}(\R)); \ \|u\|_{X}:=\sum_{l=0}^{s}\sup_{t\ge0}(1+t)^{\frac{1}{4}+\frac{l}{2}}\left\|\p_{x}^{l}u(\cdot, t)\right\|_{L^{2}}<\infty \right\}.
\end{equation}
We set $N_{0}:=T(t)*u_{0}$. Then, we have from Lemma \ref{lem.linear-est} that 
\begin{equation}
\label{est-N0}
\exists C_{0}>0 \ \ s.t. \ \ \|N_{0}\|_{X}\le C_{0}E_{s}.
\end{equation}
Now, we apply the contraction mapping principle to \eqref{nonlinear-map} on the closed subset $Y$ of $X$ below: 
\begin{equation*}
Y:=\{u\in X; \ \|u\|_{X}\le2C_{0}E_{s}\}.
\end{equation*}
Then, it is sufficient to show the following estimates: 
\begin{equation}
\label{contraction-1}
\|N[u]\|_{X}\le2C_{0}E_{s}, 
\end{equation}
\begin{equation}
\label{contraction-2}
\|N[u]-N[v]\|_{X}\le \frac{1}{2}\|u-v\|_{X} 
\end{equation}
for $u, v \in Y$. If we have shown \eqref{contraction-1} and \eqref{contraction-2}, by using the Banach fixed point theorem, we can prove that \eqref{integral-eq} has a unique global solution in $Y$, i.e. \eqref{BBMB} has a unique global mild solution. 

Here and later, $E_{s}$ is assumed to be sufficiently small. First, from Sobolev's inequality 
\begin{equation}\label{Sobolev-ineq}
\|f\|_{L^{\infty}}\le \sqrt{2}\|f\|_{L^{2}}^{\frac{1}{2}}\|f'\|_{L^{2}}^{\frac{1}{2}}, \ \ f\in H^{1}(\R), 
\end{equation}
we immediately have 
\begin{equation}
\label{u-est-X}
\left\|\p_{x}^{l}u(\cdot, t)\right\|_{L^{\infty}}\le \|u\|_{X}(1+t)^{-\frac{1}{2}-\frac{l}{2}}, \ \ 0\le l \le s-1.
\end{equation}
To prove \eqref{contraction-1} and \eqref{contraction-2}, we prepare the following estimates for $0\le l \le s$ and $u, v \in Y$:
\begin{align}
\label{prepare-est-1}
\left\|\p_{x}^{l}(u^{2}-v^{2})(\cdot, t)\right\|_{L^{1}}\le&C(\|u\|_{X}+\|v\|_{X})\|u-v\|_{X}(1+t)^{-\frac{1}{2}-\frac{l}{2}}, \\
\label{prepare-est-2}
\left\|\p_{x}^{l}(u^{2}-v^{2})(\cdot, t)\right\|_{L^{2}}\le&C(\|u\|_{X}+\|v\|_{X})\|u-v\|_{X}(1+t)^{-\frac{3}{4}-\frac{l}{2}}. 
\end{align}
Since we can prove \eqref{prepare-est-2} in the same way, we only prove \eqref{prepare-est-1}. We have from Schwarz's inequality and \eqref{space-X} that 
\begin{align*}
\begin{split}
\left\|\p_{x}^{l}(u^{2}-v^{2})(\cdot, t)\right\|_{L^{1}}&= \left\|\p_{x}^{l}((u+v)(u-v))(\cdot, t)\right\|_{L^{1}}\\
&\le C\sum_{m=0}^{l}\left(\left\|\p_{x}^{l-m}u(\cdot, t)\right\|_{L^{2}}+\left\|\p_{x}^{l-m}v(\cdot, t)\right\|_{L^{2}}\right)\left\|\p_{x}^{m}(u-v)(\cdot, t)\right\|_{L^{2}} \\
&\le C\sum_{m=0}^{l}(\|u\|_{X}+\|v\|_{X})(1+t)^{-\frac{1}{4}-\frac{l-m}{2}}\|u-v\|_{X}(1+t)^{-\frac{1}{4}-\frac{m}{2}}\\
&\le C(\|u\|_{X}+\|v\|_{X})\|u-v\|_{X}(1+t)^{-\frac{1}{2}-\frac{l}{2}}.
\end{split}
\end{align*}

Now, let us prove \eqref{contraction-1} and \eqref{contraction-2}. Recalling \eqref{nonlinear-map}, we get  
\begin{equation*}
(N[u]-N[v])(t)=-\frac{\beta}{2}\int_{0}^{t}(\p_{x}T(t-\tau))*\left((1-\p_{x}^{2})^{-1}(u^{2}-v^{2})\right)(\tau)d\tau. 
\end{equation*}
Therefore, applying Lemma~\ref{lem.Duhamel-est}, it follows from \eqref{prepare-est-1} and \eqref{prepare-est-2} that 
\begin{align*}
&\left\|\p_{x}^{l}(N[u]-N[v])(t)\right\|_{L^{2}} \\
&\le C\int_{0}^{t/2}(1+t-\tau)^{-\frac{3}{4}-\frac{l}{2}}\left\|(u^{2}-v^{2})(\cdot, \tau)\right\|_{L^{1}}d\tau
+C\int_{t/2}^{t}(1+t-\tau)^{-\frac{3}{4}}\left\|\p_{x}^{l}(u^{2}-v^{2})(\cdot, \tau)\right\|_{L^{1}}d\tau \\
&\ \ \ +C\left(\int_{0}^{t}e^{-\frac{t-\tau}{2}} \left\|\p_{x}^{l}(u^{2}-v^{2})(\cdot, \tau)\right\|_{L^{2}}^{2}d\tau \right)^{\frac{1}{2}} \\
&\le C(\|u\|_{X}+\|v\|_{X})\|u-v\|_{X}\biggl\{\int_{0}^{t/2}(1+t-\tau)^{-\frac{3}{4}-\frac{l}{2}}(1+\tau)^{-\frac{1}{2}}d\tau\\
&\ \ \ +\int_{t/2}^{t}(1+t-\tau)^{-\frac{3}{4}}(1+\tau)^{-\frac{1}{2}-\frac{l}{2}}d\tau+\left(\int_{0}^{t}e^{-\frac{t-\tau}{2}}(1+\tau)^{-\frac{3}{2}-l}d\tau \right)^{\frac{1}{2}}\biggl\}\\
&\le C(\|u\|_{X}+\|v\|_{X})\|u-v\|_{X}\left\{(1+t)^{-\frac{1}{4}-\frac{l}{2}}+(1+t)^{-\frac{3}{4}-\frac{l}{2}}\right\} \\
&\le C(\|u\|_{X}+\|v\|_{X})\|u-v\|_{X}(1+t)^{-\frac{1}{4}-\frac{l}{2}}, \ \ 0\le l\le s. 
\end{align*}
Thus, there exists a positive constant $C_{1}>0$ such that 
\begin{equation*}
\|N[u]-N[v]\|_{X}\le C_{1}(\|u\|_{X}+\|v\|_{X})\|u-v\|_{X}\le 4C_{0}C_{1}E_{s}\|u-v\|_{X}, \ \ u, v \in Y.
\end{equation*}
Choosing $E_{s}$ which satisfies $4C_{0}C_{1}E_{s} \le1/2$, then we have \eqref{contraction-2}. 
Moreover, taking $v=0$ in \eqref{contraction-2}, we can immediately see that 
\begin{equation*}
\|N[u]-N[0]\|_{X}\le C_{0}E_{s}.
\end{equation*}
Since $N[0]=N_{0}$, we obtain from \eqref{est-N0} that 
\begin{equation*}
\|N[u]\|_{X}\le \|N_{0}\|_{X}+\|N[u]-N[0]\|_{X} \le 2C_{0}E_{s}.
\end{equation*}
Therefore, we get \eqref{contraction-1}. This completes the proof of the global existence and of the $L^{2}$-decay estimate \eqref{u-est-L2} for the solutions to \eqref{BBMB}. 
The $L^{\infty}$-decay estimate \eqref{u-est-Linfinity} directly follows from \eqref{u-est-L2} through Sobolev's inequality \eqref{Sobolev-ineq}.  
\end{proof}

\section{First Asymptotic Profile}

The purpose of this section is to prove Theorem~\ref{main.thm-1st.AP}, namely, we shall show that the first asymptotic profile of the solutions to~\eqref{BBMB} is given by $\chi(x, t)$ defined by~\eqref{diffusion-wave}. First of all, to discuss the asymptotic behavior, we introduce the following two basic lemmas. The first one is the $L^{p}$-decay estimate for $\chi(x, t)$ (for the proof, see Lemma~4.3 in \cite{KU17}). 
\begin{lem}\label{lem.diffusion-decay}
Let $k$ and $l$ be non-negative integers. Then, for $|M|\le1$ and $1\le p\le \infty$, we have
\begin{equation}\label{diffusion-decay}
\left\| \p_{t}^{k}\p_{x}^{l}\chi(\cdot, t)\right\|_{L^{p}}\le C|M|(1+t)^{-\frac{1}{2}(1-\frac{1}{p})-\frac{l}{2}-k}, \ \ t\ge0.
\end{equation}
\end{lem}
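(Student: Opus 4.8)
The plan is to prove the decay estimate \eqref{diffusion-decay} by working directly from the explicit formula for $\chi(x,t)$. First I would exploit the self-similar structure. Since $\chi(x,t) = (1+t)^{-1/2}\chi_*(x(1+t)^{-1/2})$, a direct change of variables in the $L^p$-norm gives $\|\p_x^l \chi(\cdot,t)\|_{L^p} = (1+t)^{-1/2(1-1/p)-l/2}\|\p_x^l \chi_*\|_{L^p}$, so the spatial-derivative case reduces entirely to showing that $\chi_*$ and all its derivatives lie in every $L^p(\R)$ with norm controlled by $|M|$ when $|M|\le 1$. For the time derivatives, I would use the Burgers equation \eqref{Burgers}: since $\chi_t = \chi_{xx} - \beta\chi\chi_x$, each application of $\p_t$ can be traded for two spatial derivatives plus a nonlinear (quadratic) term, and by induction $\p_t^k\p_x^l\chi$ is a finite sum of terms each of which is a product of spatial derivatives of $\chi$ totalling $2k+l$ derivatives. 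Thus the time-derivative case also reduces to $L^p$-bounds on spatial derivatives of $\chi_*$ (using H\"older to handle the products, noting each factor is bounded and decays).

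The core of the argument is therefore the pointwise analysis of $\chi_*(x)$ from \eqref{diffusion-wave*}. Writing $\chi_*(x) = \beta^{-1}(e^{\beta M/2}-1) e^{-x^2/4} / D(x)$ where $D(x) = \sqrt\pi + (e^{\beta M/2}-1)\int_{x/2}^{\infty} e^{-y^2}\,dy$, I would first observe that for $|M|\le 1$ the factor $|e^{\beta M/2}-1| \le C|M|$, while the denominator $D(x)$ stays bounded away from $0$ uniformly in $x$: indeed $\int_{x/2}^\infty e^{-y^2}\,dy \in [0,\sqrt\pi]$, so $D(x)$ ranges between $\sqrt\pi\cdot \min(1, e^{\beta M/2})$ and $\sqrt\pi\cdot\max(1,e^{\beta M/2})$, which is bounded below by a positive constant when $|M|\le 1$. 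Hence $|\chi_*(x)| \le C|M| e^{-x^2/4}$, which is in every $L^p$ with the right constant. For derivatives, differentiating the quotient repeatedly produces, by induction, expressions of the form (polynomial in $x$)$\times e^{-jx^2/4}\times D(x)^{-m}$ for appropriate $j\ge 1$, $m\ge 1$ — each differentiation either hits the Gaussian numerator (bringing down a polynomial factor and keeping a Gaussian) or hits $D(x)$ in the denominator (whose derivative $D'(x) = -\tfrac12(e^{\beta M/2}-1)e^{-x^2/4}$ is itself $O(|M|)$ times a Gaussian). Since $D(x)$ is bounded below, all these terms are bounded by $C|M|\,(1+|x|)^N e^{-x^2/4}$ for some $N$, and Gaussian decay beats any polynomial, so they lie in every $L^p$ with norm $\le C|M|$.

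The main obstacle — really the only delicate point — is keeping the bookkeeping clean: verifying by induction that (i) the denominator never degenerates (needs the $|M|\le 1$ hypothesis, which is exactly why it appears), (ii) every derivative of $\chi_*$ retains a Gaussian factor $e^{-x^2/4}$ so that integrability in $L^p$ for all $p\in[1,\infty]$ is automatic, and (iii) each derivative carries at least one factor of $(e^{\beta M/2}-1)$, so the bound is genuinely proportional to $|M|$ and not merely to a constant. Once this pointwise bound on $\p_x^l\chi_*$ is in hand, assembling the full estimate \eqref{diffusion-decay} is just the scaling computation plus H\"older's inequality on the finitely many product terms coming from the induction on $\p_t^k$, both routine. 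Alternatively, one may simply cite Lemma 4.3 of \cite{KU17} as the paper suggests, in which case the proof reduces to the observation that $\chi(x,t)$ here coincides with the nonlinear diffusion wave treated there.
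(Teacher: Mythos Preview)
Your proposal is correct. The paper does not actually prove this lemma; it simply cites Lemma~4.3 of \cite{KU17}, as you note in your final sentence. Your direct argument---reducing to $L^p$-bounds on $\p_x^l\chi_*$ via the self-similar scaling, controlling the denominator $D(x)$ from below using $|M|\le 1$, verifying inductively that every derivative of $\chi_*$ retains both a Gaussian factor and a factor of $(e^{\beta M/2}-1)=O(|M|)$, and then handling time derivatives through the Burgers equation \eqref{Burgers}---is exactly the standard route to this estimate and is presumably how the cited reference proceeds. The bookkeeping points you flag (nondegeneracy of $D$, persistence of the Gaussian, and linearity in $|M|$, the latter using $|M|^m\le |M|$ for the product terms) are the right ones to watch, and your treatment of each is sound.
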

\noindent 
Next, we introduce the $L^{p}$-decay estimate for the heat kernel $G(x, t)$ defined by \eqref{heat-eta}, and the estimate for the convolution $G(t)*\phi$ (for the proof, see Lemma~7.1 in~\cite{UK07} and Lemma~2.4 in \cite{F20}).
\begin{lem}\label{lem.heat-decay}
Let $k$ and $l$ be non-negative integers. Then, for $1\le p\le \infty$, we have
\begin{equation}\label{heat-decay}
\left\| \p_{t}^{k}\p_{x}^{l}G(\cdot, t)\right\|_{L^{p}}\le Ct^{-\frac{1}{2}(1-\frac{1}{p})-\frac{l}{2}-k}, \ \ t>0.
\end{equation}
Moreover, if $\phi \in L^{\infty}(\R)$ satisfies \eqref{data} and $\int_{\R}\phi(x)dx=0$, then the following estimate holds: 
\begin{align}\label{heat-decay-slow}
\left\|\p_{t}^{k}\p_{x}^{l}G(t)*\phi \right\|_{L^{p}}\le C\begin{cases}
t^{-\frac{\alpha}{2}+\frac{1}{2p}-\frac{l}{2}-k}\|\phi\|_{L^{\infty}_{\alpha}}, &t>0, \ 1<\alpha<2,\\
t^{-1+\frac{1}{2p}-\frac{l}{2}-k}\log(2+t)\|\phi\|_{L^{\infty}_{\alpha}}, &t>0, \ \alpha=2, \\
t^{-1+\frac{1}{2p}-\frac{l}{2}-k}\|\phi\|_{L^{1}_{1}}, &t>0, \ \alpha>2.
\end{cases}
\end{align}
\end{lem}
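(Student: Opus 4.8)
\textbf{Proof strategy for Lemma~\ref{lem.heat-decay}.}
The plan is to treat the two displays separately: \eqref{heat-decay} is the classical heat-kernel scaling estimate, while \eqref{heat-decay-slow} is the genuinely new ingredient and the place where I expect to spend the effort. For \eqref{heat-decay} I would simply use the explicit formula $G(x,t)=(4\pi t)^{-1/2}e^{-x^2/4t}$, observe that $\p_t^k\p_x^l G(x,t)=t^{-1/2-l/2-k}(\p_t^k\p_x^l G)(x\,t^{-1/2},1)$ by the parabolic scaling (equivalently differentiate under the Fourier symbol $e^{-t\xi^2}$), and then take the $L^p$-norm, pulling out the Jacobian factor $t^{1/2p}$ from the change of variables $x\mapsto xt^{-1/2}$; since $(\p_t^k\p_x^l G)(\cdot,1)$ is a fixed Schwartz function its $L^p$-norm is a finite constant, giving exactly the stated power $t^{-\frac12(1-1/p)-l/2-k}$.

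For \eqref{heat-decay-slow} the key point is that the hypothesis $\int_\R \phi\,dx=0$ lets us gain one extra half-power of decay over the naive bound, but only up to the obstruction created by the slow decay of $\phi$. I would write $\phi$ as a derivative of its primitive: set $\Phi(x):=\int_{-\infty}^x\phi(y)\,dy$, so that $\Phi(+\infty)=0$ by the zero-mean condition, and the decay assumption \eqref{data} gives $|\Phi(x)|\le C\|\phi\|_{L^\infty_\alpha}(1+|x|)^{-(\alpha-1)}$ for $1<\alpha<2$ (integrating the tail bound from $x$ to $\infty$ on the right, from $-\infty$ to $x$ on the left using the zero-mean cancellation), $|\Phi(x)|\le C\|\phi\|_{L^\infty_\alpha}(1+|x|)^{-1}$ when $\alpha=2$, and $\Phi\in L^1(\R)$ with $\|\Phi\|_{L^1}\le C\|\phi\|_{L^1_1}$ when $\alpha>2$. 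Then $G(t)*\phi=\p_x(G(t)*\Phi)$, so $\p_t^k\p_x^l(G(t)*\phi)=(\p_t^k\p_x^{l+1}G)(t)*\Phi$, and I estimate by Young's inequality in the form $\|(\p_t^k\p_x^{l+1}G)(t)*\Phi\|_{L^p}\le \|\p_t^k\p_x^{l+1}G(\cdot,t)\|_{L^q}\|\Phi\|_{L^r}$ with $1+1/p=1/q+1/r$; in the cases $1<\alpha\le2$ one cannot take $r=1$ because $\Phi$ is not integrable, so instead I split the convolution integral into the region $|x-y|\le \sqrt{t}$ and its complement (or, equivalently, interpolate using the pointwise weighted bound on $\Phi$), balancing the Gaussian decay of the kernel against the algebraic weight $(1+|y|)^{-(\alpha-1)}$; the region $|y|\lesssim\sqrt t$ contributes $\sqrt t^{\,1-(\alpha-1)}$ and the far region is exponentially small, producing the powers $t^{-\alpha/2+1/2p-l/2-k}$ for $1<\alpha<2$ and the borderline logarithm $\log(2+t)$ when $\alpha=2$. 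For $\alpha>2$ one simply applies \eqref{heat-decay} with $l$ replaced by $l+1$ and $q=1$, $r=1$ to get $t^{-1+1/2p-l/2-k}\|\Phi\|_{L^1}\le C t^{-1+1/2p-l/2-k}\|\phi\|_{L^1_1}$.

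The main obstacle is the bookkeeping in the critical case $\alpha=2$: there the split-region argument gives a borderline integral $\int_1^{\sqrt t}s^{-1}\,ds$, which is exactly what produces the $\log(2+t)$ factor, and one must be careful that the near-origin part of the convolution (small $t$, or the region $|y|\lesssim 1$ where the weight is $O(1)$) does not destroy the claimed rate; a uniform treatment handling both $t\to0^+$ and $t\to\infty$ with the single bound $\log(2+t)$ requires matching the two regimes cleanly. Apart from that, and the need to verify the primitive bounds on $\Phi$ carefully (the left tail $x\to-\infty$ uses the zero-mean cancellation $\Phi(x)=-\int_x^\infty\phi$, not the raw definition), everything reduces to routine Gaussian integral estimates; since the statement cites \cite{UK07} and \cite{F20} for the proof, I would in practice refer the reader there for the details and only indicate this primitive-and-split scheme.
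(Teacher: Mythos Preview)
The paper does not prove Lemma~\ref{lem.heat-decay} at all: it simply states the result and cites Lemma~7.1 in \cite{UK07} for \eqref{heat-decay} and Lemma~2.4 in \cite{F20} for \eqref{heat-decay-slow}. So there is no in-text argument to compare your proposal against. That said, your outline is sound and is essentially the standard route taken in those references: pass to the primitive $\Phi=\int_{-\infty}^{x}\phi$, use the zero-mean hypothesis to get $|\Phi(x)|\le C(1+|x|)^{-(\alpha-1)}$, write $G(t)*\phi=\p_{x}(G(t)*\Phi)$, and then balance the Gaussian against the algebraic weight by splitting the $y$-integral at scale $\sqrt{t}$. In fact the paper itself carries out exactly this kind of split later, in the estimate \eqref{G-z0-int} inside the proof of Proposition~\ref{prop-linear-2nd.AP}, so your scheme is fully consistent with the methods used here.

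One small slip to fix: you have the roles of the two tails reversed. For $x\to -\infty$ the raw definition $\Phi(x)=\int_{-\infty}^{x}\phi$ already gives the bound $|\Phi(x)|\le C(1+|x|)^{-(\alpha-1)}$ without any cancellation; it is the \emph{right} tail $x\to +\infty$ that requires the zero-mean identity $\Phi(x)=-\int_{x}^{\infty}\phi$ to produce decay. Also, when you split the convolution it is cleaner to split on $|y|\lessgtr\sqrt{t}$ (the integration variable) rather than on $|x-y|$, since you then bound one piece by $\|\p_{x}^{l+1}G(\cdot,t)\|_{L^{p}}\cdot\sup_{|y|\ge\sqrt{t}}(1+|y|)^{-(\alpha-1)}$ and the other by $\|\p_{x}^{l+1}G(\cdot,t)\|_{L^{\infty}}\cdot\int_{|y|\le\sqrt{t}}(1+|y|)^{-(\alpha-1)}\,dy$, exactly as in \eqref{G-z0-int}; this yields the stated rates for all $1\le p\le\infty$ directly and makes the $\log(2+t)$ at $\alpha=2$ transparent.
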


Now, let us discuss the asymptotic behavior. First, recalling the integral equation:
\begin{equation}
\tag{\ref{integral-eq}}
u(t)=T(t)*u_{0}-\frac{\beta}{2}\int_{0}^{t}(\p_{x}T(t-\tau))*\left((1-\p^{2}_{x})^{-1}(u^{2})\right)(\tau)d\tau. 
\end{equation}
In addition, applying the Duhamel principle to \eqref{Burgers}, we obtain 
\begin{equation}
\label{integral-eq-B}
\chi(t)=G(t)*\chi_{0}-\frac{\beta}{2}\int_{0}^{t}(\p_{x}G(t-\tau))*(\chi^{2})(\tau)d\tau, 
\end{equation}
where $\chi_{0}(x):=\chi(x, 0)=\chi_{*}(x)$. Here $\chi_*(x)$ is defined by \eqref{diffusion-wave*}. Now, we set 
\begin{equation}\label{difference}
\psi(x, t):=u(x, t)-\chi(x, t), \quad \psi_{0}(x):=u_{0}(x)-\chi_{*}(x). 
\end{equation}
Then, from \eqref{integral-eq}, \eqref{integral-eq-B} and \eqref{difference}  and by using $(1-\p_{x}^{2})^{-1}(\chi^{2})=\chi^{2}+(1-\p_{x}^{2})^{-1}\p_{x}^{2}(\chi^{2})$, we have
\begin{align}
\label{integral-eq-psi}
\begin{split}
\psi(t)&=(T-G)(t)*u_{0}+G(t)*\psi_{0} \\
&\ \ \ \ -\frac{\beta}{2}\int_{0}^{t}(\p_{x}T(t-\tau))*\left((1-\p^{2}_{x})^{-1}(u^{2}-\chi^{2})\right)(\tau)d\tau  \\
&\ \ \ \ -\frac{\beta}{2}\int_{0}^{t}(\p_{x}(T-G)(t-\tau))*(\chi^{2})(\tau)d\tau \\
&\ \ \ \ -\frac{\beta}{2}\int_{0}^{t}(\p_{x}T(t-\tau))*\left((1-\p^{2}_{x})^{-1}\p_{x}^{2}(\chi^{2})\right)(\tau)d\tau  \\
&=:I_{1}+I_{2}+I_{3}+I_{4}+I_{5}. 
\end{split}
\end{align}

Our first step to prove Theorem~\ref{main.thm-1st.AP} is to derive the following proposition: 
\begin{prop}\label{prop.linear-asymptotic}
Let $s$ be a non-negative integer. Suppose $f\in H^{s}(\R) \cap L^{1}(\R)$. Then, the estimate
\begin{equation}\label{linear-est-asymptotic}
\left\| \p^{l}_{x}(T-G)(t)*f\right\|_{L^{2}} \le C(1+t)^{-\frac{3}{4}-\frac{l}{2}}\|f\|_{L^{1}}+Ce^{-\frac{t}{2}}\left\| \p^{l}_{x}f\right\|_{L^{2}}, \ \ t\ge0
\end{equation}
holds for any integer $l$ satisfying $0\le l \le s$, where $T(x, t)$ and $G(x, t)$ are defined by \eqref{linear-T} and \eqref{heat-eta}, respectively.
\end{prop}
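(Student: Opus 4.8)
The plan is to estimate $(T-G)(t)*f$ on the Fourier side, splitting into low frequencies $|\xi|\le 1$ and high frequencies $|\xi|\ge 1$ exactly as in the proof of Lemma~\ref{lem.linear-est}. By Plancherel's theorem,
\[
\left\|\p_x^l(T-G)(t)*f\right\|_{L^2}^2=\left(\int_{|\xi|\le1}+\int_{|\xi|\ge1}\right)\left|m(\xi,t)\right|^2\xi^{2l}|\hat f(\xi)|^2\,d\xi=:J_1+J_2,
\]
where $m(\xi,t):=\exp\!\left(\frac{-t\xi^2+i\gamma t\xi^3}{1+\xi^2}\right)-e^{-t\xi^2}$ is the difference of the two Fourier multipliers (recall $\widehat{G(\cdot,t)}(\xi)=\frac{1}{\sqrt{2\pi}}e^{-t\xi^2}$). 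For the high-frequency part $J_2$, neither multiplier contributes any polynomial decay but both are bounded, so $|m(\xi,t)|\le \exp\!\left(\frac{-t\xi^2}{1+\xi^2}\right)+e^{-t\xi^2}\le 2e^{-t/2}$ for $|\xi|\ge1$; hence $J_2\le Ce^{-t}\|\p_x^l f\|_{L^2}^2$, which gives the second term in \eqref{linear-est-asymptotic}. This step is routine.

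The crux is the low-frequency part $J_1$, where we must extract the extra half power of decay $(1+t)^{-3/4-l/2}$ rather than the $(1+t)^{-1/4-l/2}$ of Lemma~\ref{lem.linear-est}. The key is a pointwise bound on the symbol difference of the form
\[
|m(\xi,t)|\le C\left(t|\xi|^3+t|\xi|^4\right)e^{-ct\xi^2}\quad\text{for }|\xi|\le1,
\]
for suitable $c\in(0,1)$. To obtain this, I would write $\frac{-t\xi^2+i\gamma t\xi^3}{1+\xi^2}=-t\xi^2+t\xi^2\!\left(1-\frac{1}{1+\xi^2}\right)+\frac{i\gamma t\xi^3}{1+\xi^2}=-t\xi^2+\frac{t\xi^4}{1+\xi^2}+\frac{i\gamma t\xi^3}{1+\xi^2}$, so that $m(\xi,t)=e^{-t\xi^2}\bigl(e^{w(\xi,t)}-1\bigr)$ with $w(\xi,t)=\frac{t\xi^4+i\gamma t\xi^3}{1+\xi^2}$. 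Since $\operatorname{Re}w(\xi,t)=\frac{t\xi^4}{1+\xi^2}\le t\xi^2$ for $|\xi|\le1$, the elementary inequality $|e^{w}-1|\le|w|e^{|\operatorname{Re}w|}$ gives $|m(\xi,t)|\le \left(t|\xi|^4+|\gamma|t|\xi|^3\right)e^{-t\xi^2}e^{t\xi^2}$ — but that kills the decay, so instead I split the exponential more carefully: use $\operatorname{Re}w(\xi,t)\le \frac12 t\xi^2$ for $|\xi|\le1$ (valid since $\frac{\xi^2}{1+\xi^2}\le\frac12$), whence $e^{-t\xi^2}e^{|\operatorname{Re}w|}\le e^{-t\xi^2/2}$ and therefore $|m(\xi,t)|\le C\,t|\xi|^3 e^{-t\xi^2/2}$ for $|\xi|\le1$ (absorbing the $t|\xi|^4$ term into $t|\xi|^3$ since $|\xi|\le1$). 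The main obstacle is precisely making this symbol estimate clean and rigorous with the correct exponential rate; once it is in hand the rest is bookkeeping.

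With the symbol bound established, $J_1$ is controlled as in \eqref{linear-1}: bounding $\sup_{|\xi|\le1}|\hat f(\xi)|\le\frac{1}{\sqrt{2\pi}}\|f\|_{L^1}$ and using $t|\xi|^3\le C(1+t)^{1/2}(1+t)^{1/2}|\xi|^3$... more directly,
\[
J_1\le C\|f\|_{L^1}^2\int_{|\xi|\le1}t^2|\xi|^{6}\xi^{2l}e^{-t\xi^2}\,d\xi\le C\|f\|_{L^1}^2\,t^2\int_{0}^{\infty}\xi^{2l+6}e^{-t\xi^2}\,d\xi\le C\|f\|_{L^1}^2\,t^{2}\,t^{-l-7/2},
\]
and since near $t=0$ one has instead the trivial bound $J_1\le C\|f\|_{L^1}^2$ (the integrand is bounded on $|\xi|\le1$), we conclude $J_1\le C(1+t)^{-3/2-l}\|f\|_{L^1}^2$, exactly the square of the first term in \eqref{linear-est-asymptotic}. (If one prefers to avoid the case split at $t=0$, replace $e^{-t\xi^2/2}$ by $e^{\xi^2/2}e^{-(1+t)\xi^2/2}$ on $|\xi|\le1$ as in the proof of Lemma~\ref{lem.linear-est}.) Combining the estimates for $J_1$ and $J_2$ and taking square roots yields \eqref{linear-est-asymptotic}.
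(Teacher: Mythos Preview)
Your proof is correct and follows essentially the same approach as the paper: Plancherel plus a high/low-frequency split, with the low-frequency gain coming from a pointwise symbol bound $|m(\xi,t)|\le Ct|\xi|^3 e^{-ct\xi^2}$ for $|\xi|\le 1$. The only cosmetic difference is in how that symbol bound is extracted: the paper factors out $\exp\!\bigl(\tfrac{-t\xi^2}{1+\xi^2}\bigr)$ and bounds the remaining difference $\exp\!\bigl(\tfrac{i\gamma t\xi^3}{1+\xi^2}\bigr)-\exp\!\bigl(\tfrac{-t\xi^4}{1+\xi^2}\bigr)$ via the mean value theorem on each piece, whereas you factor out $e^{-t\xi^2}$ and use $|e^{w}-1|\le |w|e^{|\operatorname{Re}w|}$; both routes yield the same $t^2\xi^{6}e^{-t\xi^2}$ control on $|m|^2$, and the remaining bookkeeping (including the $e^{\xi^2}e^{-(1+t)\xi^2}$ trick near $t=0$) is identical.
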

\begin{proof}
By using Plancherel's theorem, we obtain
\begin{align}\label{linear-ap-T-G}
\begin{split}
&\left\|\p_{x}^{l}(T-G)(t)*f\right\|_{L^{2}}^{2} \\
&=\left\|\left\{\exp\left( \frac{-t\xi^{2}+i\gamma t\xi^{3} }{1+\xi^{2}}\right)-e^{-t\xi^{2}}\right\}(i\xi)^{l}\hat{f}(\xi)\right\|_{L^{2}}^{2} \\
&=\left\|\exp\left( \frac{-t\xi^{2}}{1+\xi^{2}}\right)\left\{ \exp\left( \frac{i\gamma t\xi^{3} }{1+\xi^{2}}\right)-\exp\left( \frac{-t\xi^{4}}{1+\xi^{2}}\right)\right\}(i\xi)^{l}\hat{f}(\xi)\right\|_{L^{2}}^{2} \\
&=\left(\int_{|\xi|\le1}+\int_{|\xi|\ge1}\right)\exp\left( \frac{-2t\xi^{2}}{1+\xi^{2}}\right)J(\xi, t)\left|(i\xi)^{l}\hat{f}(\xi)\right|^{2}d\xi \\
&=:K_{1}+K_{2}, 
\end{split}
\end{align}
where we have defined $J(\xi, t)$ as follows 
\begin{equation}\label{linear-ap-J}
J(\xi, t):=\left| \exp\left( \frac{i\gamma t\xi^{3} }{1+\xi^{2}}\right)-\exp\left( \frac{-t\xi^{4}}{1+\xi^{2}}\right)\right|^{2}. 
\end{equation} 
From the mean value theorem, there exist $\theta_{1}=\theta_{1}(\xi, t, \gamma)$ and $\theta_{2}=\theta_{2}(\xi, t)$ such that 
\begin{align}
\exp\left( \frac{i\gamma t\xi^{3} }{1+\xi^{2}}\right)-1
&=\frac{i\gamma t\xi^{3}}{1+\xi^{2}}\exp\left( \frac{i\gamma \theta_{1}t\xi^{3} }{1+\xi^{2}}\right), \label{linear-ap-mean1}\\
1-\exp\left( \frac{-t\xi^{4}}{1+\xi^{2}}\right)
&=\frac{t\xi^{4}}{1+\xi^{2}}\exp\left( \frac{-\theta_{2}t\xi^{4}}{1+\xi^{2}}\right). \label{linear-ap-mean2}
\end{align}
Here, we note that $0<\theta_{1}, \theta_{2}<1$. Therefore, it follows from \eqref{linear-ap-J}, \eqref{linear-ap-mean1} and \eqref{linear-ap-mean2} that 
\begin{align}\label{linear-ap-J-est}
\begin{split}
J(\xi, t)&=\left| \exp\left( \frac{i\gamma t\xi^{3} }{1+\xi^{2}}\right)-1+1-\exp\left( \frac{-t\xi^{4}}{1+\xi^{2}}\right)\right|^{2} \\
&\le 2\left| \exp\left( \frac{i\gamma t\xi^{3} }{1+\xi^{2}}\right)-1\right|^{2}+2\left|1-\exp\left( \frac{-t\xi^{4}}{1+\xi^{2}}\right)\right|^{2} \\
&\le 2t^{2}|\gamma|^{2}\left|\frac{\xi^{3}}{1+\xi^{2}}\right|^{2}+2t^{2}\left|\frac{\xi^{4}}{1+\xi^{2}}\right|^{2}\exp\left( \frac{-2\theta_{2}t\xi^{4}}{1+\xi^{2}}\right) \\
&\le 2t^{2}\left(|\gamma|^{2}\xi^{6}+\xi^{8}\right). 
\end{split}
\end{align}

Now, let us evaluate $K_{1}$ and $K_{2}$. First, we shall treat $K_{1}$. By using \eqref{linear-ap-J-est}, in the same way to get \eqref{linear-1}, we can see that 
\begin{align}
\label{linear-ap-K1}
\begin{split}
K_{1}&=2t^{2} \int_{|\xi|\le1}\exp\left( \frac{-2t\xi^{2}}{1+\xi^{2}}\right)\left(|\gamma|^{2}\xi^{6}+\xi^{8}\right)\xi^{2l}|\hat{f}(\xi)|^{2}d\xi  \\
&\le 4\max\{|\gamma|^{2}, 1\}\,t^{2} \int_{|\xi|\le 1}e^{-t\xi^{2}}\xi^{2(l+3)}|\hat{f}(\xi)|^{2}d\xi  \\
&\le 8\max\{|\gamma|^{2}, 1\}\,t^{2} \left(\sup_{|\xi| \le1}|\hat{f}(\xi)|\right)^{2}\int_{0}^{1}e^{-t\xi^{2}}\xi^{2(l+3)}d\xi   \\
&=8\max\{|\gamma|^{2}, 1\}\,t^{2} \left(\sup_{|\xi| \le1}|\hat{f}(\xi)|\right)^{2}\int_{0}^{1}e^{\xi^{2}}e^{-(1+t)\xi^{2}}\xi^{2(l+3)}d\xi   \\
&\le 8e \max\{|\gamma|^{2}, 1\}\,t^{2} \left(\frac{\|f\|_{L^{1}}}{\sqrt{2\pi}}\right)^{2}\int_{0}^{1}e^{-(1+t)\xi^{2}}\xi^{2(l+3)}d\xi \\
&\le C(1+t)^{-\frac{1}{2}-(l+3)+2}\|f\|_{L^{1}}^{2}
=C(1+t)^{-\frac{3}{2}-l}\|f\|_{L^{1}}^{2}. 
\end{split}
\end{align}
Finally, we evaluate $K_{2}$. To do that, instead of \eqref{linear-ap-J-est}, we shall use the boundedness of $J(\xi, t)$: 
\[J(\xi, t)\le \left( \left|\exp\left( \frac{i\gamma t\xi^{3} }{1+\xi^{2}}\right)\right| + \left|\exp\left( \frac{-t\xi^{4}}{1+\xi^{2}}\right)\right|   \right)^{2}
\le 4, \quad \xi \in \R, \ t\ge0. \]
Therefore, similarly as \eqref{linear-2}, we have
\begin{equation}\label{linear-ap-K2}
K_{2}\le 4\left(\sup_{|\xi|\ge1}\exp\left( \frac{-2t\xi^{2}}{1+\xi^{2}}\right)\right)\int_{|\xi|\ge1}\left|(i\xi)^{l}\hat{f}(\xi)\right|^{2}d\xi
\le 4e^{-t}\left\|\p_{x}^{l}f\right\|_{L^{2}}^{2}.
\end{equation}
From \eqref{linear-ap-K1} and \eqref{linear-ap-K2}, we obtain \eqref{linear-est-asymptotic}. This completes the proof. 
\end{proof}

In order to prove Theorem~\ref{main.thm-1st.AP}, it suffices to show the following $L^{2}$-decay estimate of $\psi(x, t)$: 
\begin{prop}\label{prop.psi-est-L2}
Let $s\ge1$ be an integer. Assume that the initial data $u_{0}(x)$ satisfies the condition \eqref{data}, $u_{0}\in H^{s}(\R)$ and $\|u_{0}\|_{H^{s}}+\|u_{0}\|_{L^{1}}$ is sufficiently small. Then, for any $\e>0$, the estimate 
\begin{align}\label{psi-est-L2}
\left\|\p_{x}^{l}\psi(\cdot, t)\right\|_{L^{2}}\le C\begin{cases}
(1+t)^{-\frac{\alpha}{2}+\frac{1}{4}-\frac{l}{2}}, &t\ge0, \ 1<\alpha<2,\\
(1+t)^{-\frac{3}{4}-\frac{l}{2}+\e}, &t\ge0, \ \alpha\ge2
\end{cases}
\end{align}
holds for any integer $l$ satisfying $0\le l\le s$, where $\psi(x, t)$ is defined by \eqref{difference}.
\end{prop}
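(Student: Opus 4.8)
The plan is to estimate each of the five terms $I_1,\dots,I_5$ in the representation \eqref{integral-eq-psi} of $\psi$, reducing everything to the linear estimates of Lemmas~\ref{lem.linear-est}, \ref{lem.Duhamel-est}, Proposition~\ref{prop.linear-asymptotic}, together with the decay of $\chi$ from Lemma~\ref{lem.diffusion-decay} and the a~priori bounds \eqref{u-est-L2}--\eqref{u-est-Linfinity} from Theorem~\ref{thm.SDGE}. The term $I_1=(T-G)(t)*u_0$ is handled directly by Proposition~\ref{prop.linear-asymptotic}, which already gives $(1+t)^{-3/4-l/2}\|u_0\|_{L^1}+Ce^{-t/2}\|\p_x^l u_0\|_{L^2}$, i.e. the $\alpha\ge2$ rate (and better than the $1<\alpha<2$ rate). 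The term $I_2=G(t)*\psi_0$ is where the decay rate of the data enters: since $\int_\R\psi_0\,dx=\int_\R(u_0-\chi_*)\,dx=0$ by the choice of $M$, and $\psi_0$ satisfies \eqref{data} (as $u_0$ does and $\chi_*$ decays exponentially), Lemma~\ref{lem.heat-decay}, estimate \eqref{heat-decay-slow}, applies and produces exactly the two-case bound $(1+t)^{-\alpha/2+1/4-l/2}$ for $1<\alpha<2$ and $(1+t)^{-3/4-l/2}\log(2+t)$ for $\alpha=2$ (absorbed into the $\e$), and $(1+t)^{-3/4-l/2}$ for $\alpha>2$. So $I_1+I_2$ already produces the claimed right-hand side.

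The two Duhamel-type terms coming from the nonlinearity, $I_3$ (difference of $u^2$ and $\chi^2$, i.e. involving $\psi$ itself) and $I_5$ (the $(1-\p_x^2)^{-1}\p_x^2(\chi^2)$ correction), are treated by Lemma~\ref{lem.Duhamel-est}. For $I_3$ one writes $u^2-\chi^2=(u+\chi)\psi$ and estimates $\|\p_x^l(u^2-\chi^2)\|_{L^1}$ and $\|\cdot\|_{L^2}$ by splitting the derivative via the Leibniz rule, then uses \eqref{u-est-L2}, Lemma~\ref{lem.diffusion-decay} and the (not-yet-known) bound on $\|\p_x^m\psi\|_{L^2}$; this makes the argument a bootstrap/continuity argument on the norm
\[
\|\psi\|_{X_\alpha}:=\sum_{l=0}^{s}\sup_{t\ge0}(1+t)^{\theta(\alpha)+l/2}\big\|\p_x^l\psi(\cdot,t)\big\|_{L^2},
\]
with $\theta(\alpha)=\alpha/2-1/4$ for $1<\alpha<2$ and $\theta(\alpha)=3/4-\e$ for $\alpha\ge2$. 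One shows $\psi$ lies in this space with the map $N$ (built from the right-hand side of \eqref{integral-eq-psi}) contracting on a small ball, exactly as in the proof of Theorem~\ref{thm.SDGE}; the smallness of $\|u_0\|_{H^s}+\|u_0\|_{L^1}$ gives the contraction. The term $I_5$ needs $\|\p_x^l(\chi^2)\|_{L^1}$ and $\|\p_x^l(\chi^2)\|_{L^2}$, which decay like $(1+t)^{-1/2-l/2}$ and $(1+t)^{-3/4-l/2}$ by Lemma~\ref{lem.diffusion-decay}, and the gain of two derivatives in $(1-\p_x^2)^{-1}\p_x^2$ does not cost anything in $L^2$ by Lemma~\ref{lem.embedding}, so the resulting time integral gives $(1+t)^{-3/4-l/2}$ up to a logarithm (absorbed into $\e$), comfortably within the target.

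The remaining term $I_4=-\tfrac{\beta}{2}\int_0^t(\p_x(T-G)(t-\tau))*(\chi^2)(\tau)\,d\tau$ is the genuinely new piece and I expect it to be the main obstacle, since Proposition~\ref{prop.linear-asymptotic} gives a bound on $(T-G)(t-\tau)$ that degenerates as $\tau\to t$ (no extra decay near $t-\tau=0$). The plan is to split the integral at $\tau=t/2$: on $[0,t/2]$ use Proposition~\ref{prop.linear-asymptotic} with the $L^1$-norm $\|\p_x(\chi^2)(\tau)\|_{L^1}\le C(1+\tau)^{-1}$ (from Lemma~\ref{lem.diffusion-decay}), giving a convergent integral $\sim(1+t)^{-3/4-l/2}\int_0^{t/2}(1+\tau)^{-1}d\tau=(1+t)^{-3/4-l/2}\log(2+t)$, with the logarithm absorbed into the $\e$; on $[t/2,t]$ one cannot afford the naive $L^1$ estimate, so instead one moves derivatives onto $\chi^2$ and uses the exponentially-decaying high-frequency part of $T-G$ together with the low-frequency bound $\||\xi|^{l+1}(\widehat{T}-\widehat G)(\xi,t-\tau)\|$ controlled by $(t-\tau)$ times low powers of $|\xi|$ as in \eqref{linear-ap-J-est}, so that the $\chi^2$ factor supplies all the needed time decay $(1+\tau)^{-3/4-l/2}$ near $\tau\sim t$; integrating $(t-\tau)$-type weights against this over $[t/2,t]$ again yields $(1+t)^{-3/4-l/2}$ up to a logarithm. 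Collecting the five contributions and invoking the contraction/bootstrap to close the a~priori estimate yields \eqref{psi-est-L2}, and the $L^\infty$-statements of Theorem~\ref{main.thm-1st.AP} follow from \eqref{psi-est-L2} via the Sobolev inequality \eqref{Sobolev-ineq} applied to $\p_x^l\psi$.
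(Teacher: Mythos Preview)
Your proposal is correct and follows essentially the same route as the paper: estimate $I_1$--$I_5$ via Proposition~\ref{prop.linear-asymptotic}, Lemma~\ref{lem.heat-decay}, and Lemma~\ref{lem.Duhamel-est}, then close by an a~priori argument on the weighted sup-norm you call $\|\psi\|_{X_\alpha}$ (the paper writes it as $M_\alpha(T)$ and runs a straight bootstrap $M_\alpha(T)\le C+CE_sM_\alpha(T)$ rather than a contraction, since $u$ is already fixed and only $I_3$ carries the unknown $\psi$, with an $E_s$-small coefficient).

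Two small places where the paper is cleaner than your sketch: for $I_4$ on $[0,t/2]$ it puts \emph{all} $l+1$ derivatives on the kernel $(T-G)$, so Proposition~\ref{prop.linear-asymptotic} gives $(1+t-\tau)^{-5/4-l/2}$ against $\|\chi^2(\cdot,\tau)\|_{L^1}\le C(1+\tau)^{-1/2}$, which integrates to $(1+t)^{-3/4-l/2}$ with no logarithm; on $[t/2,t]$ one simply applies Proposition~\ref{prop.linear-asymptotic} at level $0$ after moving all derivatives onto $\chi^2$ (no need for a separate high/low frequency argument). Similarly, for $I_5$ the input $g=\p_x^2(\chi^2)$ already satisfies $\|g(\cdot,\tau)\|_{L^1}\le C(1+\tau)^{-3/2}$, which is integrable, so Lemma~\ref{lem.Duhamel-est} yields $(1+t)^{-3/4-l/2}$ directly. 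Your version with the extra logarithms absorbed into $\e$ is still sufficient for the stated proposition, just slightly wasteful.
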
 
\begin{proof}
We evaluate $\psi(x, t)$ by introducing the following quantity:  
\begin{align}
\label{M(T)}
M_{\alpha}(T):=\begin{cases}\d
\sup_{0\le t\le T}\sum_{l=0}^{s}(1+t)^{\frac{\alpha}{2}-\frac{1}{4}+\frac{l}{2}}\left\|\p_{x}^{l}\psi(\cdot, t)\right\|_{L^{2}}, &1<\alpha<2,\\
\d \sup_{0\le t\le T}\sum_{l=0}^{s}(1+t)^{\frac{3}{4}+\frac{l}{2}-\e}\left\|\p_{x}^{l}\psi(\cdot, t)\right\|_{L^{2}}, &\alpha\ge2, 
\end{cases}
\end{align}
where $\e$ is any fixed constant such that $0<\e<3/4$. To prove \eqref{psi-est-L2}, it suffices to estimate the each term of the right hand side of \eqref{integral-eq-psi}. For the first term, by using Proposition~\ref{prop.linear-asymptotic}, we obtain 
\begin{equation}\label{1st.AP-I1-est}
\left\|\p_{x}^{l}I_{1}(\cdot, t)\right\|_{L^{2}}\le C(\|u_{0}\|_{H^{s}}+\|u_{0}\|_{L^{1}})(1+t)^{-\frac{3}{4}-\frac{l}{2}}, \ \ t\ge0.
\end{equation}
Also, since $\int_{\R}\psi_{0}(x)dx=\int_{\R}(u_{0}(x)-\chi_{*}(x))dx=0$ from \eqref{diffusion-wave*}, it follows from \eqref{data} and \eqref{heat-decay-slow} that 
\begin{align}
\label{1st.AP-I2-est}
\left\|\p_{x}^{l}I_{2}(\cdot, t)\right\|_{L^{2}}\le C\begin{cases}
(1+t)^{-\frac{\alpha}{2}+\frac{1}{4}-\frac{l}{2}}\|\psi_{0}\|_{L^{\infty}_{\alpha}}, &t\ge1, \ 1<\alpha<2,\\
(1+t)^{-\frac{3}{4}-\frac{l}{2}}\log(2+t)\|\psi_{0}\|_{L^{\infty}_{\alpha}}, &t\ge1, \ \alpha=2, \\
(1+t)^{-\frac{3}{4}-\frac{l}{2}}\|\psi_{0}\|_{L^{1}_{1}}, &t\ge1, \ \alpha>2. 
\end{cases}
\end{align}
Here we have used the fact that for $\chi_{*}(x)$, $|\chi_{*}(x)|\le C|M|e^{-\frac{x^{2}}{4}}\le C|M|(1+|x|)^{-N}$ for all $N\in \mathbb{N}$.

Next, we shall evaluate the Duhamel terms $I_{3}$, $I_{4}$ and $I_{5}$ in \eqref{integral-eq-psi}. Before doing that, we prepare the following two estimates for $0\le l\le s$: 
\begin{align}
\left\|\p_{x}^{l}(u^{2}-\chi^{2})(\cdot, t)\right\|_{L^{1}}&
\le CE_{s}M_{\alpha}(T)\begin{cases}
(1+t)^{-\frac{\alpha}{2}-\frac{l}{2}}, &1<\alpha<2,\\
(1+t)^{-1-\frac{l}{2}+\e}, &\alpha\ge2, 
\end{cases} \label{u^2-chi^2-L1}\\
\left\|\p_{x}^{l}(u^{2}-\chi^{2})(\cdot, t)\right\|_{L^{2}}&
\le CE_{s}M_{\alpha}(T)\begin{cases}
(1+t)^{-\frac{\alpha}{2}-\frac{1}{4}-\frac{l}{2}}, &1<\alpha<2,\\
(1+t)^{-\frac{5}{4}-\frac{l}{2}+\e}, &\alpha\ge2. 
\end{cases}\label{u^2-chi^2-L2}
\end{align}
Let $0\le l\le s$ and $0\le t\le T$. From Theorem~\ref{thm.SDGE}, Lemma~\ref{lem.diffusion-decay} and \eqref{M(T)}, we have 
\begin{align*}
\left\|\p_{x}^{l}(u^{2}-\chi^{2})(\cdot, t)\right\|_{L^{1}}
&=\left\|\p_{x}^{l}((u+\chi)\psi)(\cdot, t)\right\|_{L^{1}} \\
&\le C\sum_{m=0}^{l}\left\|\p_{x}^{l-m}(u+\chi)(\cdot, t)\right\|_{L^{2}}\left\|\p_{x}^{m}\psi(\cdot, t)\right\|_{L^{2}} \\
&\le C\sum_{m=0}^{l}\left(\left\|\p_{x}^{l-m}u(\cdot, t)\right\|_{L^{2}}+\left\|\p_{x}^{l-m}\chi(\cdot, t)\right\|_{L^{2}}\right)\left\|\p_{x}^{m}\psi(\cdot, t)\right\|_{L^{2}} \\
&\le CE_{s}M_{\alpha}(T)\sum_{m=0}^{l}(1+t)^{-\frac{1}{4}-\frac{l-m}{2}}\begin{cases}
(1+t)^{-\frac{\alpha}{2}+\frac{1}{4}-\frac{m}{2}}, &1<\alpha<2,\\
(1+t)^{-\frac{3}{4}-\frac{m}{2}+\e}, &\alpha\ge2
\end{cases}\\
&\le CE_{s}M_{\alpha}(T)\begin{cases}
(1+t)^{-\frac{\alpha}{2}-\frac{l}{2}}, &1<\alpha<2,\\
(1+t)^{-1-\frac{l}{2}+\e}, &\alpha\ge2
\end{cases}
\end{align*}
and
\begin{align*}
\left\|\p_{x}^{l}(u^{2}-\chi^{2})(\cdot, t)\right\|_{L^{2}}
&=\left\|\p_{x}^{l}((u+\chi)\psi)(\cdot, t)\right\|_{L^{2}} \\
&\le C\left(\left\|\p_{x}^{l}u(\cdot, t)\right\|_{L^{2}}+\left\|\p_{x}^{l}\chi(\cdot, t)\right\|_{L^{2}}\right)\|\psi(\cdot, t)\|_{L^{\infty}} \\
&\ \ \ +C\sum_{m=0}^{l-1}\left(\left\|\p_{x}^{m}u(\cdot, t)\right\|_{L^{\infty}}+\left\|\p_{x}^{m}\chi(\cdot, t)\right\|_{L^{\infty}}\right)\left\|\p_{x}^{l-m}\psi(\cdot, t)\right\|_{L^{2}} \\
&\le CE_{s}M_{\alpha}(T)(1+t)^{-\frac{1}{4}-\frac{l}{2}}\begin{cases}
(1+t)^{-\frac{\alpha}{2}}, &1<\alpha<2,\\
(1+t)^{-1+\e}, &\alpha\ge2
\end{cases}\\
&\ \ \ +CE_{s}M_{\alpha}(T)\sum_{m=0}^{l-1}(1+t)^{-\frac{1}{2}-\frac{m}{2}}\begin{cases}
(1+t)^{-\frac{\alpha}{2}+\frac{1}{4}-\frac{l-m}{2}}, &1<\alpha<2,\\
(1+t)^{-\frac{3}{4}-\frac{l-m}{2}+\e}, &\alpha\ge2
\end{cases}\\
&\le CE_{s}M_{\alpha}(T)\begin{cases}
(1+t)^{-\frac{\alpha}{2}-\frac{1}{4}-\frac{l}{2}}, &1<\alpha<2,\\
(1+t)^{-\frac{5}{4}-\frac{l}{2}+\e}, &\alpha\ge2.
\end{cases}
\end{align*}
Now, applying Lemma~\ref{lem.Duhamel-est} to $I_{3}$, we obtain 
\begin{align}
&\left\|\p_{x}^{l}I_{3}(\cdot, t)\right\|_{L^{2}}
\le C\int_{0}^{t/2}(1+t-\tau)^{-\frac{3}{4}-\frac{l}{2}}\left\|(u^{2}-\chi^{2})(\cdot, \tau)\right\|_{L^{1}}d\tau \nonumber \\
&+C\int_{t/2}^{t}(1+t-\tau)^{-\frac{3}{4}}\left\|\p_{x}^{l}(u^{2}-\chi^{2})(\cdot, \tau)\right\|_{L^{1}}d\tau
+C\left(\int_{0}^{t}e^{-\frac{t-\tau}{2}} \left\|\p_{x}^{l}(u^{2}-\chi^{2})(\cdot, \tau)\right\|_{L^{2}}^{2}d\tau \right)^{\frac{1}{2}} \nonumber \\
&=:I_{3.1}+I_{3.2}+I_{3.3}. \label{1st.AP-I3-est*}
\end{align}
By using \eqref{u^2-chi^2-L1} and \eqref{u^2-chi^2-L2}, we can see that the following estimates holds: 
\begin{align}\label{1st.AP-I3.1-est}
\begin{split}
I_{3.1}&\le CE_{s}M_{\alpha}(T)(1+t)^{-\frac{3}{4}-\frac{l}{2}}
\begin{cases}
\d \int_{0}^{t/2}(1+\tau)^{-\frac{\alpha}{2}}d\tau, &1<\alpha<2,\\[3mm]
\d \int_{0}^{t/2}(1+\tau)^{-1+\e}d\tau, &\alpha\ge2 
\end{cases}\\
&\le CE_{s}M_{\alpha}(T)\begin{cases}
(1+t)^{-\frac{\alpha}{2}+\frac{1}{4}-\frac{l}{2}}, &1<\alpha<2,\\
(1+t)^{-\frac{3}{4}-\frac{l}{2}+\e}, &\alpha\ge2, 
\end{cases}
\end{split}
\end{align}
\begin{align}\label{1st.AP-I3.2-est}
\begin{split}
I_{3.2}&\le CE_{s}M_{\alpha}(T)
\begin{cases}
\d \int_{t/2}^{t}(1+t-\tau)^{-\frac{3}{4}}(1+\tau)^{-\frac{\alpha}{2}-\frac{l}{2}}d\tau, &1<\alpha<2,\\[3mm]
\d \int_{t/2}^{t}(1+t-\tau)^{-\frac{3}{4}}(1+\tau)^{-1-\frac{l}{2}+\e}d\tau, &\alpha\ge2 
\end{cases}\\
&\le CE_{s}M_{\alpha}(T)\begin{cases}
(1+t)^{-\frac{\alpha}{2}+\frac{1}{4}-\frac{l}{2}}, &1<\alpha<2,\\
(1+t)^{-\frac{3}{4}-\frac{l}{2}+\e}, &\alpha\ge2
\end{cases}
\end{split}
\end{align}
and 
\begin{align}\label{1st.AP-I3.3-est}
\begin{split}
I_{3.3}&\le CE_{s}M_{\alpha}(T)
\begin{cases}
\d \left(\int_{0}^{t}e^{-\frac{t-\tau}{2}}(1+\tau)^{-\alpha-\frac{1}{2}-l}d\tau\right)^{\frac{1}{2}}, &1<\alpha<2,\\
\d \left(\int_{0}^{t}e^{-\frac{t-\tau}{2}}(1+\tau)^{-\frac{5}{2}-l+2\e}d\tau\right)^{\frac{1}{2}}, &\alpha\ge2 
\end{cases}\\
&\le CE_{s}M_{\alpha}(T)\begin{cases}
(1+t)^{-\frac{\alpha}{2}-\frac{1}{4}-\frac{l}{2}}, &1<\alpha<2,\\
(1+t)^{-\frac{5}{4}-\frac{l}{2}+\e}, &\alpha\ge2. 
\end{cases}
\end{split}
\end{align}
Therefore, summarizing up \eqref{1st.AP-I3-est*} through \eqref{1st.AP-I3.3-est}, we obtain 
\begin{equation}\label{1st.AP-I3-est}
\left\|\p_{x}^{l}I_{3}(\cdot, t)\right\|_{L^{2}}\le CE_{s}M_{\alpha}(T)\begin{cases}
(1+t)^{-\frac{\alpha}{2}+\frac{1}{4}-\frac{l}{2}}, &t\ge0, \ 1<\alpha<2,\\
(1+t)^{-\frac{3}{4}-\frac{l}{2}+\e}, &t\ge0, \ \alpha\ge2. 
\end{cases}
\end{equation}

Next, we deal with the estimate for $I_{4}$. It follows from Proposition~\ref{prop.linear-asymptotic} and Lemma~\ref{lem.diffusion-decay} that 
\begin{align}
\label{1st.AP-I4-est}
\begin{split}
&\left\|\p_{x}^{l}I_{4}(\cdot, t)\right\|_{L^{2}}
\le C\int_{0}^{t/2}\left\|(\p_{x}^{l+1}(T-G)(t-\tau))*(\chi^{2})(\tau)\right\|_{L^{2}}d\tau\\
&\ \ \ +C\int_{t/2}^{t}\left\|(T-G)(t-\tau)*(\p_{x}^{l+1}(\chi^{2})(\tau)) \right\|_{L^{2}}d\tau\\
&\le C\int_{0}^{t/2}(1+t-\tau)^{-\frac{3}{4}-\frac{l+1}{2}}\left\|\chi^{2}(\cdot, \tau)\right\|_{L^{1}}d\tau
+C\int_{t/2}^{t}(1+t-\tau)^{-\frac{3}{4}}\left\|\p_{x}^{l+1}(\chi^{2}(\cdot, \tau))\right\|_{L^{1}}d\tau \\
&\ \ \ +C\int_{0}^{t}e^{-\frac{t-\tau}{2}}\left\|\p_{x}^{l+1}(\chi^{2}(\cdot, \tau))\right\|_{L^{2}}d\tau  \\
&\le CE_{s}\int_{0}^{t/2}(1+t-\tau)^{-\frac{5}{4}-\frac{l}{2}}(1+\tau)^{-\frac{1}{2}}d\tau
+CE_{s}\int_{t/2}^{t}(1+t-\tau)^{-\frac{3}{4}}(1+\tau)^{-1-\frac{l}{2}}d\tau \\
&\ \ \ +CE_{s}\int_{0}^{t}e^{-\frac{t-\tau}{2}}(1+\tau)^{-\frac{3}{4}-\frac{l}{2}}d\tau \\
&\le CE_{s}(1+t)^{-\frac{3}{4}-\frac{l}{2}}, \ \ t\ge0, 
\end{split}
\end{align}
where we have used the estimate $\left\|\p_{x}^{l}(\chi^{2}(\cdot, t))\right\|_{L^{p}}\le CE_{s}(1+t)^{-1+\frac{1}{2p}-\frac{l}{2}}$ for $1\le p\le \infty$. 

Finally, we shall evaluate $I_{5}$. Applying Lemma~\ref{lem.Duhamel-est} to $I_{5}$ again, similarly as before, we obtain 
\begin{align}\label{1st.AP-I5-est}
\begin{split}
&\left\|\p_{x}^{l}I_{5}(\cdot, t)\right\|_{L^{2}}
\le C\int_{0}^{t/2}(1+t-\tau)^{-\frac{3}{4}-\frac{l}{2}}\left\|\p_{x}^{2}(\chi^{2}(\cdot, \tau))\right\|_{L^{1}}d\tau \\
&\ \ \ +C\int_{t/2}^{t}(1+t-\tau)^{-\frac{3}{4}}\left\|\p_{x}^{l+2}(\chi^{2}(\cdot, \tau))\right\|_{L^{1}}d\tau
+C\left(\int_{0}^{t}e^{-\frac{t-\tau}{2}} \|\p_{x}^{l+2}(\chi^{2}(\cdot, \tau))\|_{L^{2}}^{2}d\tau \right)^{\frac{1}{2}}\\
&\le C\int_{0}^{t/2}(1+t-\tau)^{-\frac{3}{4}-\frac{l}{2}}(1+\tau)^{-\frac{3}{2}}d\tau
+C\int_{t/2}^{t}(1+t-\tau)^{-\frac{3}{4}}(1+\tau)^{-\frac{3}{2}-\frac{l}{2}}d\tau  \\
&\ \ \ +C\left(\int_{0}^{t}e^{-\frac{t-\tau}{2}}(1+\tau)^{-\frac{7}{2}-l}d\tau \right)^{\frac{1}{2}}\\
&\le CE_{s}(1+t)^{-\frac{3}{4}-\frac{l}{2}}, \ \ t\ge0. 
\end{split}
\end{align} 

Summarizing up \eqref{integral-eq-psi}, \eqref{1st.AP-I1-est}, \eqref{1st.AP-I2-est}, \eqref{1st.AP-I3-est}, \eqref{1st.AP-I4-est} and \eqref{1st.AP-I5-est}, we eventually arrive at 
\begin{align}
\label{1st.AP-final-est}
\begin{split}
\left\|\p_{x}^{l}\psi(\cdot, t)\right\|_{L^{2}}\le&\ CE_{s}(1+t)^{-\frac{3}{4}-\frac{l}{2}} \\
&+C\begin{cases}
(1+t)^{-\frac{\alpha}{2}+\frac{1}{4}-\frac{l}{2}}\|\psi_{0}\|_{L^{\infty}_{\alpha}}, &1\le t\le T, \ 1<\alpha<2,\\
(1+t)^{-\frac{3}{4}-\frac{l}{2}}\log(2+t)\|\psi_{0}\|_{L^{\infty}_{\alpha}}, &1\le t\le T, \ \alpha=2, \\
(1+t)^{-\frac{3}{4}-\frac{l}{2}}\|\psi_{0}\|_{L^{1}_{1}}, &1\le t\le T, \ \alpha>2
\end{cases}\\
&+CE_{s}M_{\alpha}(T)
\begin{cases}
(1+t)^{-\frac{\alpha}{2}+\frac{1}{4}-\frac{l}{2}}, &1\le t\le T, \ 1<\alpha<2,\\
(1+t)^{-\frac{3}{4}-\frac{l}{2}+\e}, &1\le t\le T, \ \alpha\ge2. 
\end{cases}
\end{split}
\end{align}
On the other hand, by using \eqref{u-est-L2}, \eqref{diffusion-decay} and $|M|\le E_{s}$, we find that 
\begin{equation}
\label{1st.AP-t=0-est}
\left\|\p_{x}^{l}\psi(\cdot, t)\right\|_{L^{2}}\le \left\|\p_{x}^{l}u(\cdot, t)\right\|_{L^{2}}+\left\|\p_{x}^{l}\chi(\cdot, t)\right\|_{L^{2}}\le CE_{s}, \ \ 0\le t\le 1.
\end{equation}
Since $\log(2+t)\le C(1+t)^{\e}$, combining \eqref{1st.AP-final-est} and \eqref{1st.AP-t=0-est}, we arrive at 
\begin{equation*}
M_{\alpha}(T)\le C_{0}E_{s}+C_{\alpha}\left(\|\psi_{0}\|_{L^{\infty}_{\alpha}}, \|\psi_{0}\|_{L^{1}_{1}}\right)+C_{1}E_{s}M_{\alpha}(T),
\end{equation*}
where $C_{0}$ and $C_{1}$ are certain positive constants. Therefore, we obtain the desired estimate 
\begin{equation*}
M_{\alpha}(T)\le 2C_{0}E_{s}+2C_{\alpha}\left(\|\psi_{0}\|_{L^{\infty}_{\alpha}}, \|\psi_{0}\|_{L^{1}_{1}}\right)
\end{equation*}
if $E_{s}$ is small that $C_{1}E_{s}\le 1/2$. This completes the proof. 
\end{proof}

\begin{proof}[\rm{\bf{End of the Proof of Theorem~\ref{main.thm-1st.AP}}}]
The estimate \eqref{psi-est-L2} in Proposition~\ref{prop.psi-est-L2} and the first estimate \eqref{main.thm-1st.AP-L2} in Theorem~\ref{main.thm-1st.AP} are equivalent. The second estimate \eqref{main.thm-1st.AP-Linfinity} in Theorem~\ref{main.thm-1st.AP} immediately follows from \eqref{main.thm-1st.AP-L2} through Sobolev's inequality \eqref{Sobolev-ineq}. This completes the proof of Theorem~\ref{main.thm-1st.AP}.
\end{proof}

\section{Second Asymptotic Profiles}  

In this section, we shall prove our second main result Theorem~\ref{main.thm-2nd.AP}, i.e. we construct the second asymptotic profiles for the solutions to \eqref{BBMB}. This section is divided into three subsections below. 

\subsection{Auxiliary Problem}  

In this subsection, we prepare some important Lemmas to analyze the second asymptotic profiles for the solutions to \eqref{BBMB}. To construct the second asymptotic profiles, we need to analyze the Cauchy problems \eqref{difference-second} and \eqref{difference-second-2} below. First, let us prepare the following auxiliary problem:
\begin{align}\label{aux}
\begin{split}
z_{t}+(\beta \chi z)_{x}-z_{xx}&=\lambda_x, \ \ x\in \R, \ t>0, \\
z(x, 0)&=z_{0}(x) , \ \ x\in \R, 
\end{split}
\end{align}
where $\lambda(x, t)$ is a given sufficiently regular function decaying at spatial infinity. If we set 
\begin{align}\label{U}
\begin{split}
U[h](x, t, \tau):=\int_{\R}\p_{x}(G(x-y, t-\tau)\eta(x, t))\eta(y, \tau)^{-1}\left(\int_{-\infty}^{y}h(\xi)d\xi\right)dy&,\\
x\in \R, \ 0\le \tau<t,& 
\end{split}
\end{align}
then, applying Lemma~2.6 in~\cite{F20} or Lemma~5.1 in~\cite{KU17}, we have the following formula. 
\begin{lem}\label{lem.formula}
Let $z_{0}(x)$ be a sufficiently regular function decaying at spatial infinity. Then, we can get the smooth solution of \eqref{aux} which satisfies the following formula: 
\begin{equation}\label{formula}
z(x, t)=U[z_{0}](x, t, 0)+\int_{0}^{t}U[\p_{x}\lambda(\tau)](x, t, \tau)d\tau, \ \ x\in \R, \ t>0.
\end{equation}
\end{lem}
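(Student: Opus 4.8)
\textbf{Proof proposal for Lemma~\ref{lem.formula}.}

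The plan is to reduce the variable-coefficient problem \eqref{aux} to the pure heat equation by the Cole--Hopf--type substitution that linearizes the transport term, and then to write down the heat representation and undo the substitution. First I would set $w(x,t):=\eta(x,t)^{-1}z(x,t)$, where $\eta(x,t)=\exp\!\left(\tfrac{\beta}{2}\int_{-\infty}^{x}\chi(y,t)\,dy\right)$ as in \eqref{heat-eta}. Using that $\chi$ solves the Burgers equation with conservation law \eqref{Burgers}, one checks by direct differentiation that $\eta$ satisfies $\eta_{t}-\eta_{xx}=0$ together with $\eta_{x}=\tfrac{\beta}{2}\chi\,\eta$ and $\eta_{xx}=\left(\tfrac{\beta}{2}\chi_{x}+\tfrac{\beta^{2}}{4}\chi^{2}\right)\eta$; substituting $z=\eta w$ into the left-hand side of \eqref{aux} and collecting terms, the first-order terms cancel precisely because of the relation $\eta_{x}=\tfrac{\beta}{2}\chi\eta$, leaving $\eta\left(w_{t}-w_{xx}\right)$ on the left. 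Hence \eqref{aux} is equivalent to the inhomogeneous heat equation
\begin{equation*}
w_{t}-w_{xx}=\eta^{-1}\lambda_{x}, \qquad w(x,0)=\eta_{*}(x)^{-1}z_{0}(x).
\end{equation*}

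Next I would apply the Duhamel formula for the heat equation in the form
\begin{equation*}
w(x,t)=\left(G(t)*w(\cdot,0)\right)(x)+\int_{0}^{t}\left(G(t-\tau)*\left(\eta(\cdot,\tau)^{-1}\lambda_{x}(\cdot,\tau)\right)\right)(x)\,d\tau,
\end{equation*}
then multiply through by $\eta(x,t)$ to recover $z$. To match the claimed formula \eqref{formula}, I would rewrite each heat convolution so that the inner integral is of $z_0$ (resp.\ $\partial_x\lambda$) rather than its derivative. For the initial-data term, writing $\eta(y,0)^{-1}z_0(y)=\partial_y\!\left(\int_{-\infty}^{y}\eta_*(\xi)^{-1}z_0(\xi)\,d\xi\right)$ is not quite the right move; instead one integrates by parts in $y$ after inserting $w(y,0)=\eta_*(y)^{-1}z_0(y)$, using $\partial_y G(x-y,t)=-\partial_x G(x-y,t)$, to get $\eta(x,t)\int_{\R}\partial_x G(x-y,t)\left(\int_{-\infty}^{y}\eta_*(\xi)^{-1}z_0(\xi)\,d\xi\right)\!dy$ — but this still has $\eta_*^{-1}z_0$, not $z_0$, inside. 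The correct bookkeeping is to observe that the operator $U[h]$ defined in \eqref{U} already encodes ``multiply by $\eta(x,t)$, convolve $\partial_x G$ against $\eta(\cdot,\tau)^{-1}$ times the antiderivative of $h$''; so I would simply verify, by the integration-by-parts identity $\int_\R \partial_x G(x-y,t-\tau)\,F(y)\,dy=-\int_\R G(x-y,t-\tau)\,F'(y)\,dy$ applied with $F(y)=\eta(y,\tau)^{-1}\int_{-\infty}^y h(\xi)\,d\xi$, that $U[h](x,t,\tau)=\eta(x,t)\left(G(t-\tau)*\left(\partial_y\big(\eta(\cdot,\tau)^{-1}\textstyle\int_{-\infty}^{\cdot}h\big)\right)\right)(x)$. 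Comparing this with the Duhamel expression above — for the source term one takes $h=\partial_x\lambda(\tau)$ so that $\int_{-\infty}^{y}h=\lambda(y,\tau)$ and $\partial_y(\eta^{-1}\lambda)=\eta^{-1}\lambda_x+(\partial_y\eta^{-1})\lambda$; for the initial term one takes $h=z_0$ so that $\partial_y(\eta_*^{-1}\int_{-\infty}^{\cdot}z_0)=\eta_*^{-1}z_0+(\partial_y\eta_*^{-1})\int_{-\infty}^{\cdot}z_0$ — shows the extra $(\partial_y\eta^{-1})(\cdots)$ pieces are exactly what is needed to account for the difference between $\eta^{-1}\lambda_x$ (the true source for $w$) and the naive guess, because $\partial_y\eta^{-1}=-\tfrac{\beta}{2}\chi\eta^{-1}$ and these terms reassemble into the transport contribution. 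Carrying this identification through term by term yields \eqref{formula}.

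The main obstacle, and the only genuinely delicate point, is the bookkeeping in the previous paragraph: getting the antiderivatives and the integration-by-parts boundary terms to cancel correctly so that $U[z_0](x,t,0)$ and $\int_0^t U[\partial_x\lambda(\tau)](x,t,\tau)\,d\tau$ reproduce precisely the Cole--Hopf solution with no leftover terms. All boundary terms at $y=\pm\infty$ vanish because $G$ and its derivative decay and $\lambda$, $z_0$ decay at spatial infinity (and $\eta,\eta^{-1}$ are bounded away from $0$ and $\infty$), so the manipulations are justified; the smoothness and regularity claims follow from the smoothing of the heat semigroup together with the smoothness of $\eta$, which inherits it from $\chi$. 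Since the excerpt explicitly allows citing Lemma~2.6 in \cite{F20} or Lemma~5.1 in \cite{KU17}, an alternative and shorter route is to verify that $\eta^{-1}$-conjugation reduces \eqref{aux} to exactly the equation treated there and quote the representation formula directly; I would present the Cole--Hopf reduction explicitly and then invoke that cited formula to finish, rather than re-deriving the heat-kernel manipulations in full.
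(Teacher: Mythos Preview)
There is a genuine gap: your Cole--Hopf reduction is carried out at the wrong level and with a sign error, so the clean heat equation you claim for $w=\eta^{-1}z$ never actually appears.

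First, $\eta$ does \emph{not} satisfy the heat equation. With $\eta=\exp\!\big(\tfrac{\beta}{2}\int_{-\infty}^{x}\chi\,dy\big)$ and $\chi_t=\chi_{xx}-\beta\chi\chi_x$ one computes
\[
\eta_t-\eta_{xx}=-\tfrac{\beta^{2}}{2}\chi^{2}\eta\neq 0;
\]
it is $\eta^{-1}$ that solves the heat equation. Second, even granting the correct identity, the substitution $z=\eta w$ in \eqref{aux} does not leave only $\eta(w_t-w_{xx})$: the divergence-form term $(\beta\chi z)_x$ produces the zeroth-order piece $\beta\chi_x\,\eta w$, which nothing cancels. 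One gets
\[
z_t+(\beta\chi z)_x-z_{xx}=\eta(w_t-w_{xx})+\beta\chi_x\,\eta w,
\]
so $w$ satisfies a heat equation with an extra potential term, and your Duhamel representation for $w$ is not valid. The ``bookkeeping'' paragraph is then trying to repair a formula that does not hold, which is why the extra $(\partial_y\eta^{-1})(\cdots)$ pieces refuse to line up.

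The fix is to integrate \eqref{aux} once in $x$ first. Setting $Z(x,t)=\int_{-\infty}^{x}z(y,t)\,dy$ turns the divergence form into a pure advection term:
\[
Z_t+\beta\chi\,Z_x-Z_{xx}=\lambda.
\]
Now the substitution $W:=\eta^{-1}Z$ \emph{does} give the clean heat equation $W_t-W_{xx}=\eta^{-1}\lambda$ (the first-order terms cancel via $\eta_x=\tfrac{\beta}{2}\chi\eta$, and the zero-order coefficient $\eta_t-\eta_{xx}+\tfrac{\beta^{2}}{2}\chi^{2}\eta$ vanishes by the computation above). Solving by Duhamel and then recovering $z=\partial_x(\eta W)$ yields \eqref{formula} directly, with no leftover terms: the antiderivative $\int_{-\infty}^{y}h$ in $U[h]$ comes from the passage $z\mapsto Z$, and the outer $\partial_x(G\eta)$ comes from $z=\partial_x(\eta W)$. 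This is the argument behind the cited Lemma~2.6 in \cite{F20} / Lemma~5.1 in \cite{KU17}; the paper simply invokes those references.
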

\noindent
This explicit representation formula \eqref{formula} plays an essential role in the proof of Theorem~\ref{main.thm-2nd.AP}. 

Next, for the latter sake, we introduce some estimates for $\eta(x, t)$ defined by
\begin{equation}\tag{\ref{heat-eta}}
\eta(x, t):=\eta_{*}\left(\frac{x}{\sqrt{1+t}}\right)
=\exp \left(\frac{\beta}{2}\int_{-\infty}^{x}\chi(y, t)dy\right).
\end{equation}
First, by a direct calculation, we can easily get the following uniform boundedness: 
\begin{align}
&\min \left\{1, e^{\beta M/2}\right\} \le \eta(x, t) \le \max \left\{1, e^{\beta M/2}\right\}, \ \ x\in \R, \ t\ge0, 
\label{eta-est1}\\ 
&\min \left\{1, e^{-\beta M/2}\right\} \le \eta(x, t)^{-1} \le \max \left\{1, e^{-\beta M/2}\right\}, \ \ x\in \R, \ t\ge0. \label{eta-est2}
\end{align}
Moreover, by using Lemma~\ref{diffusion-decay}, we are able to show the following $L^{p}$-decay estimates for $\eta(x, t)$ and $\eta(x, t)^{-1}$ (for the proof, see Corollary~2.3 in~\cite{K07} or Lemma~5.4 in~\cite{KU17}). 
\begin{lem}\label{lem.eta-decay}
Let $l$ be a positive integer and $1\le p\le \infty$. If $|M| \le1$, then we have 
\begin{align}\label{eta-decay}
\left\| \p^{l}_{x}\eta(\cdot, t)\right\|_{L^{p}}+\left\| \p^{l}_{x}(\eta(\cdot, t)^{-1})\right\|_{L^{p}}&\le C|M|(1+t)^{-\frac{1}{2}(1-\frac{1}{p})-\frac{l}{2}+\frac{1}{2}}, \ \ t\ge0.
\end{align}
\end{lem}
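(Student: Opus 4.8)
The plan is to exploit the product/chain-rule structure of $\p_x^l\eta$ and $\p_x^l(\eta^{-1})$ in terms of derivatives of $\chi$, and then to invoke the decay estimates for $\chi$ from Lemma~\ref{lem.diffusion-decay}. Recall from \eqref{heat-eta} that $\eta(x,t)=\exp\bigl(\tfrac{\beta}{2}\int_{-\infty}^{x}\chi(y,t)\,dy\bigr)$, so that $\p_x\eta=\tfrac{\beta}{2}\chi\,\eta$ and, similarly, $\p_x(\eta^{-1})=-\tfrac{\beta}{2}\chi\,\eta^{-1}$. First I would differentiate these identities repeatedly: by the Leibniz rule, $\p_x^l\eta$ is a finite sum of terms of the form $\eta$ times a product $\p_x^{a_1}\chi\cdots\p_x^{a_j}\chi$ with $a_1+\cdots+a_j=l-1$, $a_i\ge0$, $j\ge1$ (and analogously for $\eta^{-1}$, with the uniformly bounded factor $\eta^{-1}$ in place of $\eta$). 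One can either prove this representation by a quick induction on $l$, or quote it as the Faà di Bruno / multinomial expansion for the derivative of a composition.

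The next step is to estimate each such term in $L^p$. Using \eqref{eta-est1}--\eqref{eta-est2}, the factor $\eta$ or $\eta^{-1}$ contributes only a bounded constant, so it suffices to bound $\|\p_x^{a_1}\chi\cdots\p_x^{a_j}\chi\|_{L^p}$. I would put the factor with the highest order of differentiation (say $\p_x^{a_1}\chi$, where I may assume $a_1=\max_i a_i$) in $L^p$ and all the remaining $j-1$ factors in $L^\infty$, and apply Lemma~\ref{lem.diffusion-decay} to each: this gives, since $|M|\le1$ so that all powers of $|M|$ are bounded by $|M|$,
\begin{align*}
\bigl\|\p_x^{a_1}\chi\cdots\p_x^{a_j}\chi\bigr\|_{L^p}
&\le C|M|\,(1+t)^{-\frac12\left(1-\frac1p\right)-\frac{a_1}{2}}\prod_{i=2}^{j}(1+t)^{-\frac12-\frac{a_i}{2}}\\
&= C|M|\,(1+t)^{-\frac12\left(1-\frac1p\right)-\frac{a_1+\cdots+a_j}{2}-\frac{j-1}{2}}
= C|M|\,(1+t)^{-\frac12\left(1-\frac1p\right)-\frac{l-1}{2}-\frac{j-1}{2}}.
\end{align*}
Since $j\ge1$, the worst (slowest-decaying) case is $j=1$, which yields exactly $(1+t)^{-\frac12(1-\frac1p)-\frac{l-1}{2}}=(1+t)^{-\frac12(1-\frac1p)-\frac{l}{2}+\frac12}$, matching the claimed bound; all terms with $j\ge2$ decay strictly faster and are absorbed. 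Summing the finitely many terms in the Leibniz expansion (the number depending only on $l$) gives \eqref{eta-decay} for $\p_x^l\eta$, and the identical argument with $\eta^{-1}$ in place of $\eta$ handles $\p_x^l(\eta^{-1})$.

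There is no serious obstacle here; the lemma is essentially bookkeeping on top of Lemma~\ref{lem.diffusion-decay}. The only mild care needed is (i) to set up the Leibniz/Faà di Bruno expansion cleanly so that one sees that every term carries at least $l-1$ total derivatives on the $\chi$-factors plus an extra $\tfrac{j-1}{2}$ of decay from the product structure, and (ii) to track that the powers of $|M|$ coming from the $j$ factors of $\chi$ collapse to a single factor $|M|$ under the hypothesis $|M|\le1$. Since the paper states the result for integer $l\ge1$, the case $l=1$ ($j=1$, no differentiation of $\chi$) already gives the borderline exponent, and this is consistent with the statement. This is why I would simply cite the proofs in \cite{K07, KU17} for the routine details and present only the structural argument above.
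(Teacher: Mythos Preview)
Your approach is correct and is exactly what the paper intends: the paper does not give a proof but cites \cite{K07, KU17} and remarks that one uses Lemma~\ref{lem.diffusion-decay}, which is precisely your strategy of expanding $\p_x^l\eta$ via the chain rule and bounding each $\chi$-factor.

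One small bookkeeping slip: in the Fa\`a di Bruno expansion of $\p_x^l\eta=\p_x^l e^{F}$ with $F^{(k)}=\tfrac{\beta}{2}\p_x^{k-1}\chi$ for $k\ge1$, each term has $j$ factors coming from $F^{(k_1)}\cdots F^{(k_j)}$ with $k_1+\cdots+k_j=l$, so the orders on $\chi$ satisfy $a_1+\cdots+a_j=\sum(k_i-1)=l-j$, not $l-1$. With the correct count your product estimate becomes
\[
\bigl\|\p_x^{a_1}\chi\cdots\p_x^{a_j}\chi\bigr\|_{L^p}
\le C|M|\,(1+t)^{-\frac12(1-\frac1p)-\frac{l-j}{2}-\frac{j-1}{2}}
= C|M|\,(1+t)^{-\frac12(1-\frac1p)-\frac{l-1}{2}},
\]
so in fact \emph{every} term yields the same exponent (not only $j=1$), and the terms with $j\ge2$ are not strictly faster. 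The final bound \eqref{eta-decay} is unaffected by this correction.
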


Now, we turn back to \eqref{formula} and assume that the initial data $z_{0}(x)$ satisfies \eqref{data}.
Then, if $1<\alpha\le 2$, we have the following asymptotic formula (Proposition \ref{prop-linear-2nd.AP}) for the first term of \eqref{formula}, which plays an essential role in the proof of Theorem~\ref{main.thm-2nd.AP}. The idea of the proof is originally based on the method used in \cite{NN08}, which studies the damped wave equation and the heat equation (see also \cite{IIOW19, IIW17}). In \cite{F19-2}, the first author modified the original method in \cite{NN08} so that the modified method can be applied to the Burgers type equations. The following result is one of the generalization of Proposition~4.1 in \cite{F19-2}.
\begin{prop}\label{prop-linear-2nd.AP}
Let $l$ be a non-negative integer and $1<\alpha\le 2$. Assume that the initial data $z_{0}(x)$ satisfies the condition \eqref{data} and $\int_{\R}z_{0}(x)dx=0$. In addition, we set 
\begin{equation}\label{data-z0-2}
r_{0}(x):=\eta_{*}(x)^{-1}\int_{-\infty}^{x}z_{0}(y)dy, \quad 
\eta_{*}(x):=\exp \left(\frac{\beta}{2}\int_{-\infty}^{x}\chi_{*}(y)dy\right),
\end{equation}
and suppose that there exist $\lim_{x\to \pm \infty}(1+|x|)^{\alpha-1}r_{0}(x)=:c_{\alpha}^{\pm}$. Then, we have 
\begin{align}
&\lim_{t\to \infty}(1+t)^{\frac{\alpha}{2}+\frac{l}{2}}\left\|\p_{x}^{l}(U[z_{0}](\cdot, t, 0)-Z(\cdot, t))\right\|_{L^{\infty}}=0, \ \ 1<\alpha<2, \label{linear-2nd.AP-1}\\
&\lim_{t\to \infty}\frac{(1+t)^{1+\frac{l}{2}}}{\log(1+t)}\left\|\p_{x}^{l}(U[z_{0}](\cdot, t, 0)-Z(\cdot, t))\right\|_{L^{\infty}}=0, \ \ \alpha=2,  \label{linear-2nd.AP-2}
\end{align}
where $U$ and $Z(x, t)$ are defined by \eqref{U} and \eqref{Second-AP-Z}, respectively. 
\end{prop}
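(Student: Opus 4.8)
The plan is to analyze the first term $U[z_0](x,t,0)$ directly from its integral representation \eqref{U}, extracting the leading-order behavior by a careful Taylor expansion of the heat kernel together with an exploitation of the slow-decay structure of $r_0$. Recall that $U[z_0](x,t,0)=\int_{\R}\p_x(G(x-y,t)\eta(x,t))\,\eta_*(y/\sqrt{1})^{-1}\bigl(\int_{-\infty}^y z_0(\xi)\,d\xi\bigr)dy$; using the definition of $r_0$ in \eqref{data-z0-2} this becomes $\int_{\R}\p_x(G(x-y,t)\eta(x,t))\,\eta_*(y)\,r_0(y)\,dy$ up to the identification of $\eta(y,0)^{-1}\cdot(\text{primitive of }z_0)$ with $\eta_*(y)r_0(y)$. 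The target profile $Z(x,t)$ from \eqref{Second-AP-Z} has a similar shape but with $r_0(y)$ replaced by its asymptotic surrogate $c_\alpha(y)(1+|y|)^{-(\alpha-1)}$ and $\eta_*(y)$ absorbed into $\eta(x,t)$ evaluated at the correct scaled point. So the first step is to rewrite both $U[z_0](x,t,0)$ and $Z(x,t)$ as integrals against the same kernel $\p_x(G(x-y,t)\eta(x,t))$, so that the difference is an integral of that kernel against $\eta_*(y)r_0(y)-$ (the corresponding piece of the $Z$-integrand).

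Second, I would perform the parabolic rescaling $y=\sqrt{1+t}\,z$, $x=\sqrt{1+t}\,w$, which turns $G(x-y,t)$ into $(1+t)^{-1/2}$ times a fixed Gaussian in $w-z$ (up to the harmless shift from $t$ vs $1+t$), and turns $\eta(x,t)=\eta_*(w)$. Under this rescaling the slowly decaying weight behaves like $(1+|y|)^{-(\alpha-1)}\sim (1+t)^{-(\alpha-1)/2}|z|^{-(\alpha-1)}$ for $|z|$ bounded away from $0$, which is exactly the source of the rate $(1+t)^{-\alpha/2}$ (after the extra $(1+t)^{-1/2}$ from the kernel and another from the $\p_x$) appearing in \eqref{linear-2nd.AP-1}. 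The hypothesis $\lim_{x\to\pm\infty}(1+|x|)^{\alpha-1}r_0(x)=c_\alpha^\pm$ means that on the rescaled variable, $\eta_*(\sqrt{1+t}\,z)(1+t)^{(\alpha-1)/2}|z|^{\alpha-1}r_0(\sqrt{1+t}\,z)\to e^{\beta M/2}c_\alpha^\pm$ (since $\eta_*(\pm\infty)=1, e^{\beta M/2}$) pointwise in $z\neq 0$, and one needs a dominated-convergence argument to pass to the limit in the rescaled integral. The contribution of the region $|y|\lesssim 1$ (equivalently $|z|\lesssim (1+t)^{-1/2}$), where $r_0$ need not resemble its asymptotic profile, is $O((1+t)^{-\alpha/2-1/2})$ by crude bounds and hence negligible; this should be isolated as a preliminary splitting into $|y|\le R$ and $|y|\ge R$ with $R$ eventually sent to infinity after $t$.

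Third, derivatives: the factor $\p_x^l$ hitting $U[z_0](x,t,0)$ and $Z(x,t)$ can be handled by the product rule, distributing derivatives between $G(x-y,t)$ (each costing $(1+t)^{-1/2}$ after rescaling) and $\eta(x,t)$ (each costing $(1+t)^{-1/2}$ as well, by Lemma~\ref{lem.eta-decay}), which is precisely why the claimed rate picks up the extra $(1+t)^{-l/2}$. For the critical case $\alpha=2$, the integral $\int (1+|z|)^{-1}$ against the Gaussian is logarithmically divergent at infinity when one naively passes to the limit, and the genuine computation produces $\int_{(1+t)^{-1/2}}^{1}|z|^{-1}dz \sim \tfrac12\log(1+t)$; so here one does \emph{not} get convergence to a self-similar profile at rate $(1+t)^{-1}$ but rather the stated $o((1+t)^{-1}\log(1+t))$ statement, and the limiting object $Z(x,t)$ itself carries the $\log$. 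The technical heart is making the dominated-convergence step uniform: one needs an integrable (in $z$, uniformly in large $t$) majorant for $|\eta_*(\sqrt{1+t}\,z)\,(1+t)^{(\alpha-1)/2}r_0(\sqrt{1+t}\,z)|\cdot(\text{Gaussian})$, which follows from the uniform bound $(1+|x|)^{\alpha-1}|r_0(x)|\le C$ implied by the existence of the finite limits, combined with \eqref{eta-est1}.

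The main obstacle I anticipate is precisely this interchange of limit and integral, particularly in the critical case $\alpha=2$ where the relevant region of integration is a $t$-dependent annulus and the $\log(1+t)$ arises from the bulk rather than from any localized piece; one must carefully track that the error between $r_0(\sqrt{1+t}\,z)$ and its asymptotic surrogate, after multiplication by $(1+t)^{1/2}$ and integration against $|z|^{-1}$ over $|z|\ge(1+t)^{-1/2}$, is $o(\log(1+t))$ rather than merely $O(\log(1+t))$. This is handled by splitting the $z$-integral at a fixed (large) threshold $R$: on $(1+t)^{-1/2}\le|z|\le R$ the weight $r_0$ is not yet close to its limit but the integral $\int_{(1+t)^{-1/2}}^R|z|^{-1}dz\le \tfrac12\log(1+t)+\log R$ is dominated by the $\log(1+t)$ only in a way that, divided by $\log(1+t)$, the $|z|\le R$ piece contributes $O(1/\log(1+t))\to 0$ after accounting for boundedness of the integrand there — wait, more carefully one wants the $|z|\ge R$ tail where the asymptotics hold to capture the full $\log$, and shows the approximation error there is $\le \sup_{|y|\ge R\sqrt{1+t}}|(1+|y|)^{\alpha-1}r_0(y)-c_\alpha^\pm|$ times $\log(1+t)$, which can be made $\e\log(1+t)$ by choosing $R$ large. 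Dividing by $\log(1+t)$ and then sending $R\to\infty$ gives \eqref{linear-2nd.AP-2}. The subcritical case \eqref{linear-2nd.AP-1} is the same argument without the logarithmic subtlety, since there the integral $\int_0^\infty |z|^{-(\alpha-1)}(\text{Gaussian})\,dz$ converges at both ends and ordinary dominated convergence applies after rescaling.
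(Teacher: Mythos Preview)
Your core idea---split the $y$-integral at a fixed threshold $R$, use the hypothesis $|(1+|y|)^{\alpha-1}r_0(y)-c_\alpha^\pm|<\varepsilon$ on $|y|\ge R$, bound the compact piece $|y|\le R$ separately, then let $\varepsilon\to 0$---is exactly the paper's argument. But you have overcomplicated it and introduced an algebraic slip.

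First, the slip: $\eta(y,0)^{-1}\int_{-\infty}^y z_0=\eta_*(y)^{-1}\int_{-\infty}^y z_0=r_0(y)$, not $\eta_*(y)r_0(y)$. So $U[z_0](x,t,0)=\int_\R \partial_x(G(x-y,t)\eta(x,t))\,r_0(y)\,dy$ already, and the difference $U[z_0]-Z$ is simply the same kernel integrated against $r_0(y)-c_\alpha(y)(1+|y|)^{-(\alpha-1)}$. Your extra $\eta_*(y)$ factor, and the subsequent discussion of $\eta_*(\pm\infty)$, is spurious.

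Second, the rescaling $y=\sqrt{1+t}\,z$ is unnecessary and muddies the argument. The paper works entirely in the original $y$-variable: on $|y|\le R$ it bounds $\|\partial_x^{l+1}(G\eta)(\cdot,t)\|_{L^\infty}\int_{|y|\le R}|r_0-c_\alpha(\cdot)(1+|\cdot|)^{1-\alpha}|\,dy\le C_R(1+t)^{-1-l/2}$, which beats the target rate; on $|y|\ge R$ it uses the $\varepsilon$-closeness and then estimates $\int_\R|\partial_x^n G(x-y,t)|(1+|y|)^{-(\alpha-1)}dy$ directly by splitting at $|y|=\sqrt{1+t}-1$ (one side uses $\|\partial_x^n G\|_{L^1}$, the other $\|\partial_x^n G\|_{L^\infty}$), producing $C(1+t)^{-(\alpha-1)/2-n/2}$ for $1<\alpha<2$ and $C(1+t)^{-1/2-n/2}\log(2+t)$ for $\alpha=2$. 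No dominated convergence, no pointwise limit of rescaled integrands---just a direct $\varepsilon/3$-type estimate. Your rescaled dominated-convergence picture would eventually yield the same bounds, but the direct route is shorter and avoids the self-corrections in your $\alpha=2$ discussion.
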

\begin{proof}
It follows from the definition of $U$ given by \eqref{U} that 
\begin{align}\label{U-t}
\begin{split}
U[z_{0}](x, t, 0)=&\int_{\R}\p_{x}(G(x-y, t)\eta(x, t))\eta_{*}(y)^{-1}\left(\int_{-\infty}^{y}z_{0}(\xi)d\xi\right) dy\\
=&\int_{\R}\p_{x}(G(x-y, t)\eta(x, t))r_{0}(y)dy.
\end{split}
\end{align} 
First, we shall check the following estimate: 
\begin{equation}\label{r0-decay}
|r_{0}(y)|\le C(1+|y|)^{-(\alpha-1)}, \ y\in \R.
\end{equation}
If $x<0$, from \eqref{eta-est2} and \eqref{data}, we obtain 
\begin{align*}
|r_{0}(x)|&\le C\int_{-\infty}^{x}|z_{0}(y)|dy
\le C\int_{-\infty}^{x}(1+|y|)^{-\alpha}dy \\
&\le C\int_{-\infty}^{x}(1-y)^{-\alpha}dy\le C(1-x)^{-(\alpha-1)}=C(1+|x|)^{-(\alpha-1)}. 
\end{align*}
On the other hand, since $\int_{\R}z_{0}(x)dx=0$, if $x>0$, similarly we have
\begin{align*}
|r_{0}(x)|&\le C\left|\int_{-\infty}^{x}z_{0}(y)dy\right|
=C\left|\int_{-\infty}^{x}z_{0}(y)dy-\int_{\R}z_{0}(y)dy\right|
=C\left|\int_{x}^{\infty}z_{0}(y)dy\right|\\
&\le C\int_{x}^{\infty}(1+|y|)^{-\alpha}dy
\le C\int_{x}^{\infty}(1+y)^{-\alpha}dy\le C(1+x)^{-(\alpha-1)}=C(1+|x|)^{-(\alpha-1)}. 
\end{align*}
Therefore, we get \eqref{r0-decay}. Thus, we obtain the boundedness of $(1+|y|)^{\alpha-1}r_{0}(y)$. Moreover, from the assumption on $r_{0}(y)$, for any $\e>0$ there is a constant $R=R(\e)>0$ such that
 \begin{align*}
&\left|r_{0}(y)-c_{\alpha}^{+}(1+|y|)^{-(\alpha-1)}\right|\le \e(1+|y|)^{-(\alpha-1)}, \ y\ge R, \\
&\left|r_{0}(y)-c_{\alpha}^{-}(1+|y|)^{-(\alpha-1)}\right|\le \e(1+|y|)^{-(\alpha-1)}, \ y\le -R.
 \end{align*}
From \eqref{Second-AP-Z} and \eqref{U-t}, we obtain the following estimate
 \begin{align}\label{U-Z-est1}
 \begin{split}
&\left|\p_{x}^{l}(U[z_{0}](x, t, 0)-Z(x, t))\right|\\
 &\le \int_{\R}\left|\p_{x}^{l+1}(G(x-y, t)\eta(x, t))\right|\left|r_{0}(y)-c_{\alpha}(y)(1+|y|)^{-(\alpha-1)}\right|dy\\
 &\le \int_{|y|\le R}\left|\p_{x}^{l+1}(G(x-y, t)\eta(x, t))\right|\left|r_{0}(y)-c_{\alpha}(y)(1+|y|)^{-(\alpha-1)}\right|dy\\
 &\ \ \ \ +\e \int_{|y|\ge R}\left|\p_{x}^{l+1}(G(x-y, t)\eta(x, t))\right|(1+|y|)^{-(\alpha-1)}dy \\
 &\le C\sum_{n=0}^{l+1}\left\|\p_{x}^{l+1-n}\eta(\cdot, t)\right\|_{L^{\infty}}\left\|\p_{x}^{n}G(\cdot, t)\right\|_{L^{\infty}}\int_{|y|\le R}\left|r_{0}(y)-c_{\alpha}(y)(1+|y|)^{-(\alpha-1)}\right|dy\\
 &\ \ \ +\e C\sum_{n=0}^{l+1}\left\|\p_{x}^{l+1-n}\eta(\cdot, t)\right\|_{L^{\infty}}\int_{\R}\left|\p_{x}^{n}G(x-y, t)\right|(1+|y|)^{-(\alpha-1)}dy.
 \end{split}
 \end{align}
For the integral in the second term of the right hand side of \eqref{U-Z-est1}, we have from \eqref{heat-decay} that 
 \begin{align}
 \label{G-z0-int}
\begin{split}
&\int_{\R} \left|\p_{x}^{n}G(x-y, t)\right|(1+|y|)^{-(\alpha-1)}dy \\
&=\left(\int_{|y|\ge \sqrt{1+t}-1}+\int_{|y|\le \sqrt{1+t}-1}\right)\left|\p_{x}^{n}G(x-y, t)\right|(1+|y|)^{-(\alpha-1)}dy \\
&\le \left(\sup_{|y|\ge \sqrt{1+t}-1}(1+|y|)^{-(\alpha-1)}\right)\int_{|y|\ge \sqrt{1+t}-1}\left|\p_{x}^{n}G(x-y, t)\right|dy\\
&\ \ \ \ +\left(\sup_{|y|\le \sqrt{1+t}-1}\left|\p_{x}^{n}G(x-y, t)\right|\right)\int_{|y|\le \sqrt{1+t}-1}(1+|y|)^{-(\alpha-1)}dy\\
&\le (1+t)^{-\frac{\alpha-1}{2}}\left\|\p_{x}^{n}G(\cdot, t)\right\|_{L^{1}}+\left\|\p_{x}^{n}G(\cdot, t)\right\|_{L^{\infty}}\int_{|y|\le \sqrt{1+t}-1}(1+|y|)^{-(\alpha-1)}dy\\
&\le C(1+t)^{-\frac{\alpha-1}{2}-\frac{n}{2}}+Ct^{-\frac{1}{2}-\frac{n}{2}}\int_{0}^{\sqrt{1+t}-1}(1+y)^{-(\alpha-1)}dy \\
&\le C(1+t)^{-\frac{\alpha-1}{2}-\frac{n}{2}}+Ct^{-\frac{1}{2}-\frac{n}{2}}\begin{cases}
(1+t)^{-\frac{\alpha-1}{2}+\frac{1}{2}}, &1<\alpha<2,  \\
\log(2+t), &\alpha=2
\end{cases}\\
&\le C\begin{cases}
(1+t)^{-\frac{\alpha-1}{2}-\frac{n}{2}}, &t\ge1, \ 1<\alpha<2, \\
(1+t)^{-\frac{1}{2}-\frac{n}{2}}\log(2+t), &t\ge1, \ \alpha=2.
\end{cases}
\end{split}
\end{align} 
Therefore, by using \eqref{U-Z-est1}, \eqref{G-z0-int}, \eqref{heat-decay} and \eqref{eta-decay}, we obtain
  \begin{equation*}
 \left\|\p_{x}^{l}(U[z_{0}](\cdot, t, 0)-Z(\cdot, t))\right\|_{L^{\infty}}\le C(1+t)^{-1-\frac{l}{2}}+\e C\begin{cases}
(1+t)^{-\frac{\alpha}{2}-\frac{l}{2}}, &t\ge1, \ 1<\alpha<2, \\
(1+t)^{-1-\frac{l}{2}}\log(2+t), &t\ge1, \ \alpha=2.\end{cases}
 \end{equation*}
Thus, we finally arrive at 
 \begin{align*}
&\limsup_{t\to \infty}(1+t)^{\frac{\alpha}{2}+\frac{l}{2}}\left\|\p_{x}^{l}(U[z_{0}](\cdot, t, 0)-Z(\cdot, t))\right\|_{L^{\infty}}\le\e C, \ \ 1<\alpha<2, \\
&\limsup_{t\to \infty}\frac{(1+t)^{1+\frac{l}{2}}}{\log(1+t)}\left\|\p_{x}^{l}(U[z_{0}](\cdot, t, 0)-Z(\cdot, t))\right\|_{L^{\infty}} \le\e C, \ \ \alpha=2.  
\end{align*}
Therefore, we get \eqref{linear-2nd.AP-1} and \eqref{linear-2nd.AP-2}, because $\e>0$ can be chosen arbitrarily small. 
\end{proof}

In addition to the above proposition, we have the decay estimates for the first term of \eqref{formula}. Actually, the following estimates have been established (for the proof, see Corollary~3.4 in~\cite{K07}). 
\begin{lem}\label{lem.est-U1}
Let $s\ge1$ be an integer and $\alpha>2$. Assume that the initial data $z_{0}(x)$ satisfies the condition \eqref{data}, $z_{0}\in H^{s}(\R)$ and $\int_{\R}z_{0}(x)dx=0$. If $|M| \le1$, then the estimate 
\begin{equation}\label{est-U1-L2}
\left\| \p^{l}_{x}U[z_{0}](\cdot, t, 0)\right\|_{L^{2}} \le C(\|z_{0}\|_{H^{s}}+\|z_{0}\|_{L^{1}_{1}})(1+t)^{-\frac{3}{4}-\frac{l}{2}}, \ \ t>0
\end{equation}
holds for any integer $l$ satisfying $0\le l \le s$. Moreover, 
\begin{equation}\label{est-U1-Linfinity}
\left\| \p^{l}_{x}U[z_{0}](\cdot, t, 0)\right\|_{L^{\infty}} \le C(\|z_{0}\|_{H^{s}}+\|z_{0}\|_{L^{1}_{1}})(1+t)^{-1-\frac{l}{2}}, \ \ t>0
\end{equation}
holds for any integer $l$ satisfying $0\le l \le s-1$. Here, $U$ is defined by \eqref{U}. 
\end{lem}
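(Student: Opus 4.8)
The plan is to work from the explicit representation \eqref{U-t},
$$U[z_0](x,t,0)=\int_\R \p_x\bigl(G(x-y,t)\,\eta(x,t)\bigr)\,r_0(y)\,dy ,$$
and to estimate it separately on $t\ge1$ and on $0<t\le1$, using the decay of $G$ and of the derivatives of $\eta$ supplied by Lemmas~\ref{lem.heat-decay} and~\ref{lem.eta-decay}. First I would record the properties of $r_0$. Since $\alpha>2$, the bound \eqref{data} forces $z_0\in L^1_1(\R)$, and exactly as in the proof of Proposition~\ref{prop-linear-2nd.AP} one gets $|r_0(x)|\le C(1+|x|)^{-(\alpha-1)}$, so that $r_0\in L^1(\R)\cap L^\infty(\R)$ with $\|r_0\|_{L^1}\le C\|z_0\|_{L^1_1}$ (by Fubini applied to $\int_\R\bigl|\int_{-\infty}^{\cdot}z_0\bigr|$, together with $\|\eta_*^{-1}\|_{L^\infty}\le C$). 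Moreover, since $\eta_*^{-1}$ and all of its derivatives are bounded — they are polynomial combinations of $\chi_*$ and its (Gaussian) derivatives multiplied by $\eta_*^{-1}$ — differentiating $r_0=\eta_*^{-1}\int_{-\infty}^{\cdot}z_0$ shows that $r_0$ inherits the regularity of $z_0$: $\p_x^k r_0\in L^2(\R)$ for $0\le k\le s+1$ and $\p_x^k r_0\in L^\infty(\R)$ for $0\le k\le s$, with norms $\le C(\|z_0\|_{H^s}+\|z_0\|_{L^1_1})$.

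For $t\ge1$ I would differentiate \eqref{U-t} under the integral sign and apply the Leibniz rule,
$$\p_x^l U[z_0](x,t,0)=\sum_{n=0}^{l+1}\binom{l+1}{n}\bigl(\p_x^{\,l+1-n}\eta\bigr)(x,t)\,\bigl((\p_x^n G)(\cdot,t)*r_0\bigr)(x).$$
For each term, Young's inequality and \eqref{heat-decay} give $\|(\p_x^n G)(\cdot,t)*r_0\|_{L^p}\le\|\p_x^n G(\cdot,t)\|_{L^p}\|r_0\|_{L^1}\le C(1+t)^{-\frac12(1-\frac1p)-\frac n2}\|r_0\|_{L^1}$, while Lemma~\ref{lem.eta-decay} gives $\|\p_x^{\,l+1-n}\eta(\cdot,t)\|_{L^\infty}\le C(1+t)^{-(l+1-n)/2}$ for $n\le l$, and \eqref{eta-est1} gives $\|\eta(\cdot,t)\|_{L^\infty}\le C$ for the remaining term $n=l+1$. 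Multiplying, every summand is $\le C(\|z_0\|_{H^s}+\|z_0\|_{L^1_1})(1+t)^{-\frac12(1-\frac1p)-\frac{l+1}{2}}$, which is $C(1+t)^{-3/4-l/2}$ for $p=2$ and $C(1+t)^{-1-l/2}$ for $p=\infty$; the gain of half a power over the bare heat rate comes precisely from the extra derivative $\p_x$ sitting on $G(x-y,t)\eta(x,t)$.

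For $0<t\le1$ the derivatives of $G$ have non-integrable singularities in $t$ at the origin, so I would first integrate by parts in $y$, using $(\p_x G)(x-y,t)=-\p_y G(x-y,t)$ and the boundedness of $r_0$, to obtain
$$U[z_0](x,t,0)=\eta(x,t)\,\bigl(G(\cdot,t)*r_0'\bigr)(x)+\p_x\eta(x,t)\,\bigl(G(\cdot,t)*r_0\bigr)(x),$$
then differentiate by Leibniz and commute $\p_x^m$ into the convolution so that all $x$-derivatives land on $r_0$ or $r_0'$. Since $\|G(\cdot,t)\|_{L^1}=1$ and $\|\p_x^j\eta(\cdot,t)\|_{L^\infty}\le C$ uniformly in $t\ge0$, this gives $\|\p_x^l U[z_0](\cdot,t,0)\|_{L^2}\le C\sum_{k\le l+1}\|\p_x^k r_0\|_{L^2}\le C(\|z_0\|_{H^s}+\|z_0\|_{L^1_1})$ for $0\le l\le s$, and likewise $\|\p_x^l U[z_0](\cdot,t,0)\|_{L^\infty}\le C\sum_{k\le l+1}\|\p_x^k r_0\|_{L^\infty}\le C(\|z_0\|_{H^s}+\|z_0\|_{L^1_1})$ for $0\le l\le s-1$; this last range is exactly what makes the top-order term, which requires $\p_x^{l+1}r_0\in L^\infty$ and hence $z_0\in W^{s-1,\infty}$, admissible. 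On $0<t\le1$ the right-hand sides of \eqref{est-U1-L2} and \eqref{est-U1-Linfinity} are bounded below by positive constants, so these uniform bounds close the argument, and combining the two regimes yields the lemma. The main obstacle is precisely this small-time regime: no negative power of $t$ coming from differentiating the heat kernel can be tolerated, and it is the integration-by-parts identity — which trades those derivatives for derivatives of the data — followed by the $H^s$-regularity of $z_0$ transferred to $r_0$ that resolves it, with the exact count of derivatives landing on $r_0$ dictating the ranges $0\le l\le s$ and $0\le l\le s-1$.
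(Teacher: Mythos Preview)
Your argument is correct. The paper does not supply its own proof of this lemma; it simply cites Corollary~3.4 of Kato~\cite{K07}. There is therefore no in-paper argument to compare against, but what you have written is essentially the natural proof one would expect: for large time, expand $\p_x^{l+1}\bigl(\eta\,(G*r_0)\bigr)$ by Leibniz and use $\|\p_x^n G(\cdot,t)\|_{L^p}\|r_0\|_{L^1}$ together with the decay of $\p_x^{l+1-n}\eta$; for small time, integrate by parts once so that all $x$-derivatives can be absorbed by the regularity of $r_0$ inherited from $z_0\in H^s$, avoiding any singular power of $t$ from the heat kernel.

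Two small points worth making precise in a final write-up. First, the identity $\p_x^m(G*r_0')=G*(\p_x^{m+1}r_0)$ is cleanest justified on the Fourier side (both sides lie in $L^2$ and have the same transform), rather than by repeated integration by parts where one would otherwise have to track decay of intermediate derivatives. Second, your claim that $r_0\in H^{s+1}\cap W^{s,\infty}$ with norms controlled by $\|z_0\|_{H^s}+\|z_0\|_{L^1_1}$ is correct and follows exactly as you outline: the term $(\eta_*^{-1})^{(k)}\int_{-\infty}^{x}z_0$ is Schwartz-times-bounded, and the remaining Leibniz terms involve $\p_x^{j-1}z_0$ with $j\le k$, which lie in $L^2$ for $k\le s+1$ and in $L^\infty$ (via $H^1\hookrightarrow L^\infty$) for $k\le s$. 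These counts are precisely what force the ranges $0\le l\le s$ in \eqref{est-U1-L2} and $0\le l\le s-1$ in \eqref{est-U1-Linfinity}, as you observe.
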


Finally, to evaluate the second term of \eqref{formula}, we prepare the following lemma: 
\begin{lem}\label{lem.est-U2}
Let $l$ be a non-negative integer and $1\le p\le \infty$. Assume that $\lambda(x, t)$ is sufficiently regular function decaying at spatial infinity. If $|M| \le1$, then we have 
\begin{equation}\label{est-U2}
\left\|\p_{x}^{l}U[\p_{x}\lambda(\tau)](\cdot, t, \tau)\right\|_{L^{p}}\le C\sum_{j=0}^{l+1}(1+t)^{-\frac{1}{2}(l+1-j)}\left\|\p_{x}^{j}J[\lambda](\cdot, t, \tau)\right\|_{L^{p}}, 
\end{equation}
where $U$ is defined by \eqref{U} and $J$ is defined by 
\begin{equation}\label{J}
J[\lambda](x, t, \tau):=\int_{\R}G(x-y, t-\tau)\eta(y, \tau)^{-1}\lambda(y, \tau)dy=\left(G(t-\tau)*(\eta^{-1}\lambda \right)(\tau))(x).
\end{equation}
\end{lem}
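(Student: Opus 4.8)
The plan is to first rewrite $U[\p_{x}\lambda(\tau)]$ in closed form in terms of $J[\lambda]$, and then to estimate by the Leibniz rule together with the bounds on $\eta$ already at hand. First I would use the assumption that $\lambda(\cdot,\tau)$ decays at spatial infinity to note that $\int_{-\infty}^{y}\p_{x}\lambda(\xi,\tau)\,d\xi=\lambda(y,\tau)$, so that by the definition \eqref{U} of $U$,
\begin{equation*}
U[\p_{x}\lambda(\tau)](x,t,\tau)=\int_{\R}\p_{x}\bigl(G(x-y,t-\tau)\eta(x,t)\bigr)\,\eta(y,\tau)^{-1}\lambda(y,\tau)\,dy.
\end{equation*}
Since the factor $\eta(y,\tau)^{-1}\lambda(y,\tau)$ does not depend on $x$ and the $x$-derivative falls only on $G(x-y,t-\tau)\eta(x,t)$, I would pull $\p_{x}$, and then the $x$-dependent factor $\eta(x,t)$, outside the $y$-integral; differentiation under the integral sign is legitimate because $t>\tau$ makes $G(\cdot,t-\tau)$ smooth and the integrand and its $x$-derivative are dominated by integrable functions thanks to the decay of $\lambda$. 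This yields the key identity
\begin{equation*}
U[\p_{x}\lambda(\tau)](x,t,\tau)=\p_{x}\bigl(\eta(x,t)\,J[\lambda](x,t,\tau)\bigr),
\end{equation*}
with $J$ as in \eqref{J}.

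Consequently $\p_{x}^{l}U[\p_{x}\lambda(\tau)](\cdot,t,\tau)=\p_{x}^{l+1}\bigl(\eta(\cdot,t)J[\lambda](\cdot,t,\tau)\bigr)$, and the Leibniz rule gives
\begin{equation*}
\p_{x}^{l}U[\p_{x}\lambda(\tau)](\cdot,t,\tau)=\sum_{j=0}^{l+1}\binom{l+1}{j}\bigl(\p_{x}^{l+1-j}\eta(\cdot,t)\bigr)\bigl(\p_{x}^{j}J[\lambda](\cdot,t,\tau)\bigr).
\end{equation*}
Taking the $L^{p}$-norm and applying H\"older's inequality, each term is bounded by $\|\p_{x}^{l+1-j}\eta(\cdot,t)\|_{L^{\infty}}\|\p_{x}^{j}J[\lambda](\cdot,t,\tau)\|_{L^{p}}$. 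For the top-order term $j=l+1$ the $\eta$-factor is just $\eta(\cdot,t)$, which is bounded uniformly by \eqref{eta-est1}; for $0\le j\le l$ the $\eta$-factor carries at least one derivative, and Lemma~\ref{lem.eta-decay} with $p=\infty$ and $|M|\le1$ gives $\|\p_{x}^{l+1-j}\eta(\cdot,t)\|_{L^{\infty}}\le C(1+t)^{-\frac12(l+1-j)}$. Since the power $(1+t)^{-\frac12(l+1-j)}$ degenerates exactly to $(1+t)^{0}$ when $j=l+1$, the two cases are captured by the single factor $(1+t)^{-\frac12(l+1-j)}$, and summing over $j$ gives precisely \eqref{est-U2}.

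There is essentially no analytic obstacle in this lemma: the one point requiring a little care is the first step, namely justifying that $\p_{x}$ and $\eta(x,t)$ may be taken outside the $y$-integral so as to obtain the clean identity $U[\p_{x}\lambda(\tau)]=\p_{x}(\eta\,J[\lambda])$; once this structural observation is made, the estimate is a routine combination of the Leibniz rule with the previously established bounds \eqref{eta-est1} on $\eta$ and \eqref{eta-decay} on its derivatives, and the remaining work is only the bookkeeping needed to see that a single power of $(1+t)$ covers both the $j=l+1$ and the $0\le j\le l$ contributions.
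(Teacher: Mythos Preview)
Your proposal is correct and follows essentially the same approach as the paper. The paper's proof is just a compressed version of yours: it writes down directly the Leibniz expansion
\[
\p_{x}^{l}U[\p_{x}\lambda(\tau)](x,t,\tau)=\sum_{j=0}^{l+1}\binom{l+1}{j}\p_{x}^{l+1-j}\eta(x,t)\int_{\R}\p_{x}^{j}G(x-y,t-\tau)\eta(y,\tau)^{-1}\lambda(y,\tau)\,dy
\]
and then cites \eqref{eta-est1}, Lemma~\ref{lem.eta-decay}, and the definition of $J$ to conclude, which is exactly your argument with the intermediate identity $U[\p_{x}\lambda(\tau)]=\p_{x}(\eta\,J[\lambda])$ left implicit.
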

\begin{proof}
For any given regular function $\lambda(x, t)$ and any integer $l$, it follows from \eqref{U} that 
\begin{align*}
\p_{x}^{l}U[\p_{x}\lambda(\tau)](x, t, \tau)=\sum_{j=0}^{l+1}\begin{pmatrix}l+1 \\ j \end{pmatrix}\p_{x}^{l+1-j}\eta(x, t)\int_{\R}\p_{x}^{j}G(x-y, t-\tau)\eta(y, \tau)^{-1}\lambda(y, \tau)dy. 
\end{align*}
Therefore, the desired result \eqref{est-U2} immediately follows from Lemma~\ref{lem.eta-decay}, \eqref{eta-est1} and \eqref{J}. 
\end{proof}

\subsection{Proof of Theorem \ref{main.thm-2nd.AP}~for~$\bm{1<\alpha<2}$}  

In this subsection, we shall prove Theorem~\ref{main.thm-2nd.AP} in the case of $1<\alpha<2$. Namely, we show that the asymptotic relation \eqref{main.thm-2nd.AP-1} holds. First, multiplying the operator $(1-\p_{x}^{2})^{-1}$ on \eqref{BBMB}, we have 
\begin{align}
\label{BBMB-up}
\begin{split}
&u_{t}-u_{xxt}-u_{xx}+\gamma u_{xxx}+\beta uu_{x}=0\\
\Longleftrightarrow \ \ &u_{t}-(1-\p_{x}^{2})^{-1}\p_{x}^{2}u+\gamma (1-\p_{x}^{2})^{-1}\p_{x}^{3}u+\beta (1-\p_{x}^{2})^{-1}(uu_{x})=0 \\
\Longleftrightarrow \ \ &u_{t}-u_{xx}-(1-\p_{x}^{2})^{-1}\p_{x}^{4}u+\gamma (1-\p_{x}^{2})^{-1}\p_{x}^{3}u+\beta uu_{x}+\beta (1-\p_{x}^{2})^{-1}\p_{x}^{2}(uu_{x})=0 \\
\Longleftrightarrow \ \ &u_{t}+\beta uu_{x}-u_{xx}=-\gamma (1-\p_{x}^{2})^{-1}\p_{x}^{3}u+(1-\p_{x}^{2})^{-1}\p_{x}^{4}u-\frac{\beta}{2} (1-\p_{x}^{2})^{-1}\p_{x}^{3}(u^{2}), 
\end{split}
\end{align}
where we have used the following facts: 
\begin{align*}
&(1-\p_{x}^{2})^{-1}\p_{x}^{2}u=u_{xx}+(1-\p_{x}^{2})^{-1}\p_{x}^{4}u, \\
&(1-\p_{x}^{2})^{-1}(uu_{x})=uu_{x}+(1-\p_{x}^{2})^{-1}\p_{x}^{2}(uu_{x})
=u_{xx}+\frac{1}{2}(1-\p_{x}^{2})^{-1}\p_{x}^{3}(u^{2}).
\end{align*}
Now, recalling the definitions of $\psi(x, t)$ and $\psi_{0}(x)$:
\begin{equation}\tag{\ref{difference}}
\psi(x, t):=u(x, t)-\chi(x, t), \quad \psi_{0}(x):=u_{0}(x)-\chi_{*}(x). 
\end{equation}
Then, from \eqref{BBMB-up} and \eqref{Burgers}, we have the following Cauchy problem: 
\begin{align}\label{difference-second}
\begin{split}
\psi_{t}+(\beta \chi \psi)_{x}-\psi_{xx}
&=-\p_{x}\left(\frac{\beta}{2}\psi^{2}\right)
-\gamma(1-\p_{x}^{2})^{-1}\p_{x}^{3}u\\
&\ \ \ \,+(1-\p_{x}^{2})^{-1}\p_{x}^{4}u
-\frac{\beta}{2}(1-\p_{x}^{2})^{-1}\p_{x}^{3}(u^{2}), \ \ x\in \R, \ t>0,\\
\psi(x, 0)&=u_{0}(x)-\chi_{*}(x)=\psi_{0}(x), \ \ x\in \R.
\end{split}
\end{align}
Applying Lemma~\ref{lem.formula} to \eqref{difference-second}, we obtain
\begin{equation}\label{integral-eq-2nd}
\psi(x, t)=U[\psi_{0}](x, t, 0)+D[u, \psi](x, t), 
\end{equation}
where the integral part $D[u, \psi](x, t)$ is defined by
\begin{align}\label{integral-eq-D}
\begin{split}
&D[u, \psi](x, t):=-\frac{\beta}{2}\int_{0}^{t}U[\p_{x}(\psi^{2})(\tau)](x, t, \tau)d\tau
-\gamma \int_{0}^{t}U[(1-\p_{x}^{2})^{-1}\p_{x}^{3}u(\tau)](x, t, \tau)d\tau \\
&\quad +\int_{0}^{t}U[(1-\p_{x}^{2})^{-1}\p_{x}^{4}u(\tau)](x, t, \tau)d\tau
-\frac{\beta}{2}\int_{0}^{t}U[(1-\p_{x}^{2})^{-1}\p_{x}^{3}(u^{2})(\tau)](x, t, \tau)d\tau \\
&\quad =:D_{1}(x, t)+D_{2}(x, t)+D_{3}(x, t)+D_{4}(x, t). 
\end{split}
\end{align}

In order to prove \eqref{main.thm-2nd.AP-1}, it is sufficient to show the following proposition: 
\begin{prop}\label{prop.D-est}
Let $s\ge2$ be an integer and $1<\alpha<2$. Assume that the initial data $u_{0}(x)$ satisfies the condition \eqref{data}, $u_{0}\in H^{s}(\R)$ and $\|u_{0}\|_{H^{s}}+\|u_{0}\|_{L^{1}}$ is sufficiently small. Then, the estimate  
\begin{equation}\label{D-est}
\left\|\p_{x}^{l}D[u, \psi](\cdot, t)\right\|_{L^{\infty}}\le C
\begin{cases}
(1+t)^{-\alpha+\frac{1}{2}-\frac{l}{2}}, &t\ge1, \ 1<\alpha<3/2, \\
(1+t)^{-1-\frac{l}{2}}\log(2+t), &t\ge1, \ 3/2 \le \alpha<2
\end{cases}
\end{equation}
holds for any integer $l$ satisfying $0\le l\le s-2$.
\end{prop}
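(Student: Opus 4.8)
The plan is to estimate each of the four terms $D_1,\dots,D_4$ in \eqref{integral-eq-D} separately, using Lemma~\ref{lem.est-U2} to reduce everything to $L^\infty$- and $L^1$-estimates of the heat convolution $J[\lambda](x,t,\tau) = (G(t-\tau)*(\eta^{-1}\lambda)(\tau))(x)$ with $\lambda$ running over $\tfrac{\beta}{2}\psi^2$, $-\gamma(1-\p_x^2)^{-1}\p_x^2 u$, $(1-\p_x^2)^{-1}\p_x^3 u$ and $-\tfrac{\beta}{2}(1-\p_x^2)^{-1}\p_x^2(u^2)$ (note that one power of $\p_x$ has been absorbed in going from $U[\p_x\lambda]$ to $J[\lambda]$). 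The key inputs are: the global decay estimates \eqref{u-est-L2}--\eqref{u-est-Linfinity}, the improved decay of $\psi$ from Proposition~\ref{prop.psi-est-L2}, the boundedness and decay of $\eta,\eta^{-1}$ from \eqref{eta-est1}--\eqref{eta-est2} and Lemma~\ref{lem.eta-decay}, the smoothing bound $\left\|(1-\p_x^2)^{-1}\p_x^k g\right\|_{L^p}\le C\|g\|_{W^{(k-2)^+,p}}$ (from Lemma~\ref{lem.embedding} together with $\p_x^2(1-\p_x^2)^{-1} = -\mathrm{Id} + (1-\p_x^2)^{-1}$), and the standard heat estimates $\|G(t-\tau)*h\|_{L^\infty}\le C(t-\tau)^{-1/2}\|h\|_{L^1}$ and $\le \|h\|_{L^\infty}$, with derivatives costing $(t-\tau)^{-1/2}$ each.

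First I would treat $D_1$, the genuinely nonlinear term coming from $\psi^2$. Here $\lambda = \tfrac{\beta}{2}\psi^2$ and by Proposition~\ref{prop.psi-est-L2} one has $\|\p_x^l(\psi^2)(\tau)\|_{L^1}\lesssim (1+\tau)^{-\alpha+\frac12-\frac l2}$ and $\|\p_x^l(\psi^2)(\tau)\|_{L^\infty}\lesssim (1+\tau)^{-\alpha-\frac l2}$ after interpolation with Sobolev's inequality \eqref{Sobolev-ineq}; feeding these into Lemma~\ref{lem.est-U2} and splitting $\int_0^t = \int_0^{t/2}+\int_{t/2}^t$ as usual (heat decay on the near-$t$ factor for the first piece, on the data factor for the second) produces a contribution of order $(1+t)^{-\alpha+\frac12-\frac l2}$ up to the time integral $\int_0^{t/2}(1+\tau)^{-\alpha}\,d\tau$, which converges since $\alpha>1$ — this is exactly the first branch of \eqref{D-est}. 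For $D_2,D_3,D_4$ the relevant $\lambda$'s involve the nonlocal operator applied to derivatives of $u$ or $u^2$; using the smoothing bound above together with \eqref{u-est-L2}--\eqref{u-est-Linfinity} one gets $\|\p_x^j J[\lambda](\tau)\|_{L^\infty}$ and $\|\p_x^j J[\lambda](\tau)\|_{L^1}$ decaying like powers of $(1+\tau)$; because $u$ itself only decays at the rate $(1+\tau)^{-1/4}$ in $L^2$ (it does not see the faster $\psi$-rate), these terms are governed by an integral of the shape $\int_0^{t/2}(1+\tau)^{-1-\mathrm{something}}\,d\tau$ which is convergent, so they contribute at worst $(1+t)^{-1-\frac l2}\log(2+t)$; for $3/2\le\alpha<2$ this dominates and gives the second branch, while for $1<\alpha<3/2$ it is subsumed by $(1+t)^{-\alpha+\frac12-\frac l2}$.

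Concretely, for the near-diagonal piece $\int_{t/2}^t$ I would use the $L^\infty$ heat bound $\|G(t-\tau)*h\|_{L^\infty}\le\|h\|_{L^\infty}$ together with the derivative cost: $\|\p_x^j G(t-\tau)*h\|_{L^\infty}\le C(t-\tau)^{-j/2}\|h\|_{L^\infty}$ for $j\ge1$ and one must be slightly careful that the exponents $-j/2$ with $j\le l+1$ are integrable against $d\tau$ near $\tau=t$, which they are since $l\le s-2$ keeps the total derivative count controlled and one can always trade a derivative landing on $\eta$ (which is bounded) against one landing on $G$; alternatively one trades $L^\infty\to L^1$ via Sobolev on a well-chosen interval. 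The bookkeeping of which of the $j$-indexed terms in Lemma~\ref{lem.est-U2} is worst — the prefactor $(1+t)^{-\frac12(l+1-j)}$ rewards large $j$ but large $j$ costs more heat-derivative decay on $J$ — is the routine but lengthy part.

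The main obstacle will be the near-diagonal time integral $\int_{t/2}^t$ in $D_1$: since $\lambda=\tfrac\beta2\psi^2$ must be differentiated $l$ times and then convolved against up to $l+1$ derivatives of $G$, and $\psi$ only has $s$ derivatives bounded in $L^2$, one has to organize the Leibniz expansion so that no more than $s$ derivatives ever hit $\psi$ while still extracting enough $(t-\tau)^{-1/2}$ factors from $G$ to make $\int_{t/2}^t(t-\tau)^{-\rho}(1+\tau)^{-\alpha-\frac l2}\,d\tau$ converge with the right power of $(1+t)$ in front; the constraint $l\le s-2$ (rather than $l\le s$) is precisely what provides the two spare derivatives needed here and in the analogous step for $D_3,D_4$. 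Once the splitting and the Leibniz/interpolation bookkeeping are set up, each of the resulting elementary time integrals is handled by the standard beta-function-type asymptotics $\int_0^{t/2}(1+t-\tau)^{-a}(1+\tau)^{-b}\,d\tau + \int_{t/2}^t(\cdots)\lesssim (1+t)^{-\min\{a,b\}+\text{lower order}}$ already used repeatedly in Section~3.
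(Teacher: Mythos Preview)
Your overall strategy matches the paper's: split $D$ into $D_1,\dots,D_4$, apply Lemma~\ref{lem.est-U2}, split the time integral, and estimate each piece via heat-kernel and known decay bounds. However, two concrete points in your bookkeeping are wrong and would prevent the argument from closing.

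\medskip
\textbf{1. The $D_1$ far piece.} The integrand that actually appears on $[0,t/2]$ is $(1+\tau)^{-\alpha+\frac12}$, coming from $\|\psi^2(\cdot,\tau)\|_{L^1}\le C(1+\tau)^{-\alpha+\frac12}$ paired with $\|\p_x^j G(\cdot,t-\tau)\|_{L^\infty}$ --- not $(1+\tau)^{-\alpha}$. For $1<\alpha\le 3/2$ this integral does \emph{not} converge; its growth (as $(1+t)^{3/2-\alpha}$, or $\log(2+t)$ at $\alpha=3/2$) is exactly what produces the first branch of \eqref{D-est} and the logarithm at the crossover. If instead you pair $\|\p_x^j G\|_{L^1}$ with $\|\psi^2\|_{L^\infty}$ to get $(1+\tau)^{-\alpha}$ in the integral, the resulting bound is only $(1+t)^{-\frac12-\frac l2}$, which is too weak for every $\alpha>1$. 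So your stated integrand and convergence claim are inconsistent with the rate you want.

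\medskip
\textbf{2. The $D_2,D_3,D_4$ far pieces need an integration by parts.} The smoothing bound you list is not sufficient on $[0,t/2]$. For $D_2$, the best direct $L^2$ estimate is $\|(1-\p_x^2)^{-1}\p_x^2 u\|_{L^2}\le C\|\p_x^2 u\|_{L^2}\le C(1+\tau)^{-5/4}$; pairing with $\|\p_x^j G(\cdot,t-\tau)\|_{L^2}$ and integrating gives, after the prefactor from Lemma~\ref{lem.est-U2}, only $(1+t)^{-\frac34-\frac l2}$, which fails the proposition for $\alpha>5/4$. The paper instead writes $(1-\p_x^2)^{-1}\p_x^2 u=\p_y\bigl((1-\p_x^2)^{-1}\p_y u\bigr)$ inside $J$ and integrates by parts in $y$, shifting one derivative onto $G$ (gaining $(t-\tau)^{-1/2}$) or onto $\eta^{-1}$; this yields the required $(1+t)^{-1-\frac l2}\log(2+t)$, with the logarithm coming from the $\eta^{-1}$-piece via $\int_0^{t/2}\|\p_x(\eta^{-1})\|_{L^2}\|\p_x u\|_{L^2}\,d\tau\sim\int_0^{t/2}(1+\tau)^{-1}d\tau$. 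The same trick is used in $D_3$ and $D_4$. You should insert this step explicitly; your description (``convergent integral'' yet ``at worst a log'') is internally inconsistent and reflects that this mechanism is missing.

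\medskip
A minor correction: the restriction $l\le s-2$ is forced by $D_2,D_3,D_4$ (the near-diagonal pieces there need up to $l+2$ derivatives of $u$ in $L^2$), not by $D_1$, for which $l\le s-1$ suffices.
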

\begin{proof}
We shall evaluate each terms $D_{i}(x, t)$ ($i=1, 2, 3, 4$) in the right hand side of \eqref{integral-eq-D}. 
First, we start with evaluation of $D_{1}(x, t)$. Applying Lemma~\ref{lem.est-U2} and splitting the $\tau$-integral, we have 
\begin{align}
\begin{split}\label{D1-split}
\left\|\p_{x}^{l}D_{1}(\cdot, t)\right\|_{L^{\infty}}
&\le C\sum_{j=0}^{l+1}(1+t)^{-\frac{l+1-j}{2}}\int_{0}^{t}\left\|\p_{x}^{j}J[\psi^{2}](\cdot, t, \tau)\right\|_{L^{\infty}}d\tau\\
&\le C\sum_{j=0}^{l+1}(1+t)^{-\frac{l+1-j}{2}}\left(\int_{0}^{t/2}+\int_{t/2}^{t}\right)\left\|\p_{x}^{j}J[\psi^{2}](\cdot, t, \tau)\right\|_{L^{\infty}}d\tau\\
&=:C\sum_{j=0}^{l+1}(1+t)^{-\frac{l+1-j}{2}}(D_{1.1}+D_{1.2}).
\end{split}
\end{align}
For $D_{1.1}$, from \eqref{J}, Young's inequality, \eqref{eta-est2}, \eqref{heat-decay} and \eqref{main.thm-1st.AP-L2}, we obtain 
\begin{align}
\begin{split}\label{D1.1-est}
D_{1.1}&\le C\int_{0}^{t/2}\left\|\p_{x}^{j}G(\cdot, t-\tau)\right\|_{L^{\infty}}\left\|\psi^{2}(\cdot, \tau)\right\|_{L^{1}}d\tau
\le C\int_{0}^{t/2}(t-\tau)^{-\frac{1}{2}-\frac{j}{2}}(1+\tau)^{-\alpha+\frac{1}{2}}d\tau  \\
&\le C(1+t)^{-\frac{1}{2}-\frac{j}{2}}
\begin{cases}
(1+t)^{-\alpha+\frac{3}{2}}, &t\ge1, \ 1<\alpha<3/2, \\
\log(2+t), &t\ge1, \ \alpha=3/2, \\
C,  &t\ge1, \ 3/2<\alpha<2. 
\end{cases}
\end{split}
\end{align}
On the other hand, for $D_{1.2}$ with $j=0$, we have from \eqref{heat-decay} and \eqref{main.thm-1st.AP-Linfinity} that 
\begin{align}
\begin{split}\label{D1.2-est-1}
D_{1.2}&\le C\int_{t/2}^{t}\|G(\cdot, t-\tau)\|_{L^{1}}\left\|\psi^{2}(\cdot, \tau)\right\|_{L^{\infty}}d\tau 
\le C\int_{t/2}^{t}\|\psi(\cdot, \tau)\|_{L^{\infty}}^{2}d\tau\\
&\le C\int_{t/2}^{t}(1+\tau)^{-\alpha}d\tau \le C(1+t)^{-\alpha+1}, \ \ t\ge0. 
\end{split}
\end{align}
For $j=1, 2, \cdots, l+1$, we prepare an estimate. It follows from \eqref{lem.eta-decay}, \eqref{eta-est2} and \eqref{main.thm-1st.AP-Linfinity} that 
\begin{align}\label{prepare-for-D1}
\begin{split}
\left\|\p_{x}^{j-1}((\eta^{-1}\psi^{2})(\cdot, \tau))\right\|_{L^{\infty}}
& \le C\sum_{m=0}^{j-1}\sum_{n=0}^{j-1-m}\left\|\p_{x}^{m}(\eta(\cdot, \tau)^{-1})\right\|_{L^{\infty}}\left\|\p_{x}^{n}\psi(\cdot, \tau)\right\|_{L^{\infty}}\left\|\p_{x}^{j-1-m-n}\psi(\cdot, \tau)\right\|_{L^{\infty}}\\
&\le C\sum_{m=0}^{j-1}\sum_{n=0}^{j-1-m}(1+\tau)^{-\frac{m}{2}}(1+\tau)^{-\frac{\alpha}{2}-\frac{n}{2}}(1+\tau)^{-\frac{\alpha}{2}-\frac{j-1-m-n}{2}}\\
&\le C(1+\tau)^{-\alpha-\frac{j-1}{2}}, \ \ t\ge0.
\end{split}
\end{align}
Therefore, we get the following estimate for $D_{1.2}$ with $j=1, 2, \cdots, l+1$: 
\begin{align}
\begin{split}\label{D1.2-est-2}
D_{1.2}&\le C\int_{t/2}^{t}\left\|\p_{x}G(\cdot, t-\tau)\right\|_{L^{1}}\left\|\p_{x}^{j-1}((\eta^{-1}\psi^{2})(\cdot, \tau))\right\|_{L^{\infty}}d\tau\\
&\le C\int_{t/2}^{t}(t-\tau)^{-\frac{1}{2}}(1+\tau)^{-\alpha-\frac{j-1}{2}}d\tau \le C(1+t)^{-\alpha+1-\frac{j}{2}}, \ \ t\ge0. 
\end{split}
\end{align}
Combining \eqref{D1.2-est-1} and \eqref{D1.2-est-2}, for all $j=0, 1, \cdots, l+1$, we have
\begin{equation}\label{D1.2-est}
D_{1.2}\le C(1+t)^{-\alpha+1-\frac{j}{2}}, \ \ t\ge0.
\end{equation}
Therefore, summing up \eqref{D1-split}, \eqref{D1.1-est} and \eqref{D1.2-est}, we arrive at 
\begin{align}\label{D1-est}
\begin{split}
\left\|\p_{x}^{l}D_{1}(\cdot, t)\right\|_{L^{\infty}}
&\le C(1+t)^{-\alpha+\frac{1}{2}-\frac{l}{2}}
+C(1+t)^{-1-\frac{l}{2}}
\begin{cases}
(1+t)^{-\alpha+\frac{3}{2}}, &1<\alpha<3/2, \\
\log(2+t), &\alpha=3/2, \\
C,  &3/2<\alpha<2
\end{cases}\\
&\le C\begin{cases}
(1+t)^{-\alpha+\frac{1}{2}-\frac{l}{2}}, &t\ge1, \ 1<\alpha<3/2, \\
(1+t)^{-1-\frac{l}{2}}\log(2+t), &t\ge1, \ \alpha=3/2, \\
(1+t)^{-1-\frac{l}{2}},  &t\ge1, \ 3/2<\alpha<2,
\end{cases}\quad 0\le l \le s-1.
\end{split}
\end{align}

Next, we deal with the estimate for $D_{2}(x, t)$. In the following, let $0\le l \le s-2$. Similarly as \eqref{D1-split}, applying Lemma~\ref{lem.est-U2} and splitting the $\tau$-integral, we have 
\begin{align}
\begin{split}\label{D2-split}
&\left\|\p_{x}^{l}D_{2}(\cdot, t)\right\|_{L^{\infty}}
\le C\sum_{j=0}^{l+1}(1+t)^{-\frac{l+1-j}{2}}\int_{0}^{t}\left\|\p_{x}^{j}J[(1-\p_{x}^{2})^{-1}\p_{x}^{2}u](\cdot, t, \tau)\right\|_{L^{\infty}}d\tau\\
&\le C\sum_{j=0}^{l+1}(1+t)^{-\frac{l+1-j}{2}}\left(\int_{0}^{t/2}+\int_{t/2}^{t}\right)\left\|\p_{x}^{j}J[(1-\p_{x}^{2})^{-1}\p_{x}^{2}u](\cdot, t, \tau)\right\|_{L^{\infty}}d\tau\\
&=:C\sum_{j=0}^{l+1}(1+t)^{-\frac{l+1-j}{2}}(D_{2.1}+D_{2.2}).
\end{split}
\end{align}
First, making the integration by parts, we rewrite $J[(1-\p_{x}^{2})^{-1}\p_{x}^{2}u](x, t, \tau)$ as follows 
\begin{align}
\begin{split}\label{int-parts}
J[(1-\p_{x}^{2})^{-1}\p_{x}^{2}u](x, t, \tau)&
=\int_{\R}G(x-y, t-\tau)\eta(y, \tau)^{-1}(1-\p_{y}^{2})^{-1}\p_{y}^{2}u(y, \tau)dy\\
&=\int_{\R}\p_{x}G(x-y, t-\tau)\eta(y, \tau)^{-1}(1-\p_{y}^{2})^{-1}\p_{y}u(y, \tau)dy\\
&\ \ \ \ -\int_{\R}G(x-y, t-\tau)\p_{y}(\eta(y, \tau)^{-1})(1-\p_{y}^{2})^{-1}\p_{y}u(y, \tau)dy.
\end{split}
\end{align}
Therefore, by using \eqref{int-parts}, Young's inequality, \eqref{eta-est2}, \eqref{heat-decay}, Lemma~\ref{lem.embedding}, Schwarz's inequality, \eqref{u-est-L2} and Lemma~\ref{lem.eta-decay}, we can see that 
\begin{align}
\begin{split}\label{D2.1-est}
&D_{2.1}\le C\int_{0}^{t/2}\left\|\p_{x}^{j+1}G(\cdot, t-\tau)\right\|_{L^{2}}\left\|(1-\p_{x}^{2})^{-1}\p_{x}u(\cdot, \tau)\right\|_{L^{2}}d\tau \\
&\qquad \quad+C\int_{0}^{t/2}\left\|\p_{x}^{j}G(\cdot, t-\tau)\right\|_{L^{\infty}}\left\|\p_{x}(\eta(\cdot, \tau)^{-1})(1-\p_{x}^{2})^{-1}\p_{x}u(\cdot, \tau)\right\|_{L^{1}}d\tau \\
&\le C\int_{0}^{t/2}\left\|\p_{x}^{j+1}G(\cdot, t-\tau)\right\|_{L^{2}}\left\|\p_{x}u(\cdot, \tau)\right\|_{L^{2}}d\tau \\
&\ \ \ +C\int_{0}^{t/2}\left\|\p_{x}^{j}G(\cdot, t-\tau)\right\|_{L^{\infty}}\left\|\p_{x}(\eta(\cdot, \tau)^{-1})\right\|_{L^{2}}\left\|\p_{x}u(\cdot, \tau)\right\|_{L^{2}}d\tau \\
&\le CE_{s}\int_{0}^{t/2}(t-\tau)^{-\frac{3}{4}-\frac{j}{2}}(1+\tau)^{-\frac{3}{4}}d\tau
+CE_{s}\int_{0}^{t/2}(t-\tau)^{-\frac{1}{2}-\frac{j}{2}}(1+\tau)^{-\frac{1}{4}}(1+\tau)^{-\frac{3}{4}}d\tau\\
&\le CE_{s}(1+t)^{-\frac{1}{2}-\frac{j}{2}}+CE_{s}(1+t)^{-\frac{1}{2}-\frac{j}{2}}\log(2+t), \ \ t\ge1.
\end{split}
\end{align}
For $i=0, 1$, by using \eqref{eta-est2}, Lemma~\ref{lem.eta-decay}, Lemma~\ref{lem.embedding} and \eqref{u-est-L2}, we get
\begin{align}\label{prepare-D2.2}
\begin{split}
&\left\|\p_{x}^{j}\left(\p_{x}^{i}(\eta(\cdot, \tau)^{-1})(1-\p_{x}^{2})^{-1}\p_{x}u(\cdot, \tau)\right)\right\|_{L^{2}} \\
&\le C\sum_{n=0}^{j}\left\|\p_{x}^{j+i-n}(\eta(\cdot, \tau)^{-1})\right\|_{L^{\infty}}\left\|(1-\p_{x}^{2})^{-1}\p_{x}^{n+1}u(\cdot, \tau)\right\|_{L^{2}}  \\
&\le C\sum_{n=0}^{j}\left\|\p_{x}^{j+i-n}(\eta(\cdot, \tau)^{-1})\right\|_{L^{\infty}}\left\|\p_{x}^{n+1}u(\cdot, \tau)\right\|_{L^{2}}\\
&\le CE_{s}\sum_{n=0}^{j}(1+\tau)^{-\frac{j+i-n}{2}}(1+\tau)^{-\frac{3}{4}-\frac{n}{2}}
\le CE_{s}(1+\tau)^{-\frac{3}{4}-\frac{i+j}{2}}, \ \ t\ge0.
\end{split}
\end{align}
Thus, from \eqref{int-parts}, Young's inequality, \eqref{heat-decay} and \eqref{prepare-D2.2}, we have
\begin{align}
\begin{split}\label{D2.2-est}
D_{2.2}&\le C\int_{t/2}^{t}\left\|\p_{x}G(\cdot, t-\tau)\right\|_{L^{2}}\left\|\p_{x}^{j}\left(\eta(\cdot, \tau)^{-1}(1-\p_{x}^{2})^{-1}\p_{x}u(\cdot, \tau)\right)\right\|_{L^{2}}d\tau \\
&\ \ \ +C\int_{t/2}^{t}\|G(\cdot, t-\tau)\|_{L^{2}}\left\|\p_{x}^{j}\left(\p_{x}(\eta(\cdot, \tau)^{-1})(1-\p_{x}^{2})^{-1}\p_{x}u(\cdot, \tau)\right)\right\|_{L^{2}}d\tau \\
&\le CE_{s}\int_{t/2}^{t}(t-\tau)^{-\frac{3}{4}}(1+\tau)^{-\frac{3}{4}-\frac{j}{2}}d\tau
+CE_{s}\int_{t/2}^{t}(t-\tau)^{-\frac{1}{4}}(1+\tau)^{-\frac{5}{4}-\frac{j}{2}}d\tau\\
&\le CE_{s}(1+t)^{-\frac{1}{2}-\frac{j}{2}}, \ \ t\ge0.
\end{split}
\end{align}
Summing up \eqref{D2-split}, \eqref{D2.1-est} and \eqref{D2.2-est}, we get 
\begin{equation}\label{D2-est}
\left\|\p_{x}^{l}D_{2}(\cdot, t)\right\|_{L^{\infty}}\le CE_{s}(1+t)^{-1-\frac{l}{2}}\log(2+t
), \ \ t\ge1, \ \ 0\le l\le s-2.
\end{equation}

Next, let us treat the evaluation for $D_{3}(x, t)$. From Lemma~\ref{lem.est-U2}, we can see that 
\begin{align}
\begin{split}\label{D3-def}
&\left\|\p_{x}^{l}D_{3}(\cdot, t)\right\|_{L^{\infty}}
\le C\sum_{j=0}^{l+1}(1+t)^{-\frac{l+1-j}{2}}\int_{0}^{t}\left\|\p_{x}^{j}J[(1-\p_{x}^{2})^{-1}\p_{x}^{3}u](\cdot, t, \tau)\right\|_{L^{\infty}}d\tau\\
&\le C\sum_{j=0}^{l+1}(1+t)^{-\frac{l+1-j}{2}}\left(\int_{0}^{t/2}+\int_{t/2}^{t}\right)\left\|\p_{x}^{j}J[(1-\p_{x}^{2})^{-1}\p_{x}^{3}u](\cdot, t, \tau)\right\|_{L^{\infty}}d\tau\\
&=:C\sum_{j=0}^{l+1}(1+t)^{-\frac{l+1-j}{2}}(D_{3.1}+D_{3.2}).
\end{split}
\end{align}
Also, in the same way to get \eqref{int-parts}, making the integration by parts, we obtain 
\begin{align}
\begin{split}\label{int-parts-2}
J[(1-\p_{x}^{2})^{-1}\p_{x}^{3}u](x, t, \tau)
&=\int_{\R}G(x-y, t-\tau)\eta(y, \tau)^{-1}(1-\p_{y}^{2})^{-1}\p_{y}^{3}u(y, \tau)dy \\
&=\int_{\R}\p_{x}G(x-y, t-\tau)\eta(y, \tau)^{-1}(1-\p_{y}^{2})^{-1}\p_{y}^{2}u(y, \tau)dy\\
&\ \ \ \ -\int_{\R}G(x-y, t-\tau)\p_{y}(\eta(y, \tau)^{-1})(1-\p_{y}^{2})^{-1}\p_{y}^{2}u(y, \tau)dy.
\end{split}
\end{align}
Thus, completely same as \eqref{D2.1-est}, we can see from \eqref{int-parts-2} that 
\begin{align}
\begin{split}\label{D3.1-est}
&D_{3.1}\le C\int_{0}^{t/2}\left\|\p_{x}^{j+1}G(\cdot, t-\tau)\right\|_{L^{2}}\left\|(1-\p_{x}^{2})^{-1}\p_{x}^{2}u(\cdot, \tau)\right\|_{L^{2}}d\tau \\
&\qquad \quad+C\int_{0}^{t/2}\left\|\p_{x}^{j}G(\cdot, t-\tau)\right\|_{L^{\infty}}\left\|\p_{x}(\eta(\cdot, \tau)^{-1})(1-\p_{x}^{2})^{-1}\p_{x}^{2}u(\cdot, \tau)\right\|_{L^{1}}d\tau \\
&\le C\int_{0}^{t/2}\left\|\p_{x}^{j+1}G(\cdot, t-\tau)\right\|_{L^{2}}\left\|\p_{x}^{2}u(\cdot, \tau)\right\|_{L^{2}}d\tau \\
&\ \ \ +C\int_{0}^{t/2}\left\|\p_{x}^{j}G(\cdot, t-\tau)\right\|_{L^{\infty}}\left\|\p_{x}(\eta(\cdot, \tau)^{-1})\right\|_{L^{2}}\left\|\p_{x}^{2}u(\cdot, \tau)\right\|_{L^{2}}d\tau \\
&\le CE_{s}\int_{0}^{t/2}(t-\tau)^{-\frac{3}{4}-\frac{j}{2}}(1+\tau)^{-\frac{5}{4}}d\tau
+CE_{s}\int_{0}^{t/2}(t-\tau)^{-\frac{1}{2}-\frac{j}{2}}(1+\tau)^{-\frac{1}{4}}(1+\tau)^{-\frac{5}{4}}d\tau\\
&\le CE_{s}(1+t)^{-\frac{1}{2}-\frac{j}{2}}, \ \ t\ge1.
\end{split}
\end{align}
We can omit the evaluation of $D_{3.2}$. Actually, from Plancherel's theorem and \eqref{nonlocal-op}, we obtain 
\begin{align}\label{embedding-2}
\begin{split}
\left\|(1-\p_{x}^{2})^{-1}\p_{x}^{j+2}u(\cdot, t)\right\|_{L^{2}}
&=\left\|\frac{(i\xi)^{j+2}}{1+\xi^{2}}\widehat{u}(\xi, t)\right\|_{L^{2}_{\xi}}
=\left\|\frac{|\xi|}{1+\xi^{2}}(i\xi)^{j+1}\widehat{u}(\xi, t)\right\|_{L^{2}_{\xi}}  \\
&\le \left\|(i\xi)^{j+1}\widehat{u}(\xi, t)\right\|_{L^{2}_{\xi}}
=\left\|\p_{x}^{j+1}u(\cdot, t)\right\|_{L^{2}}. 
\end{split}
\end{align}
Therefore, by virtue of \eqref{embedding-2}, we can evaluate $D_{3.2}$ in essentially the same way to get \eqref{D2.2-est}. Indeed, it follows from \eqref{D3-def} and \eqref{int-parts-2} that the following estimate is true:  
\begin{equation}\label{D3.2-est}
D_{3.2}\le CE_{s}(1+t)^{-\frac{1}{2}-\frac{j}{2}}, \ \ t\ge1. 
\end{equation}
Thus, combining \eqref{D3-def}, \eqref{D3.1-est} and \eqref{D3.2-est}, we obtain 
\begin{equation}\label{D3-est}
\left\|\p_{x}^{l}D_{3}(\cdot, t)\right\|_{L^{\infty}}\le CE_{s}(1+t)^{-1-\frac{l}{2}}, \ \ t\ge1, \ \ 0\le l\le s-2.
\end{equation}

Finally, we shall evaluate $D_{4}(x, t)$. Similarly as before, it follows from Lemma~\ref{lem.est-U2} that 
\begin{align}
\begin{split}\label{D4-split}
&\left\|\p_{x}^{l}D_{4}(\cdot, t)\right\|_{L^{\infty}}
\le C\sum_{j=0}^{l+1}(1+t)^{-\frac{l+1-j}{2}}\int_{0}^{t}\left\|\p_{x}^{j}J[(1-\p_{x}^{2})^{-1}\p_{x}^{2}(u^{2})](\cdot, t, \tau)\right\|_{L^{\infty}}d\tau\\
&\le C\sum_{j=0}^{l+1}(1+t)^{-\frac{l+1-j}{2}}\left(\int_{0}^{t/2}+\int_{t/2}^{t}\right)\left\|\p_{x}^{j}J[(1-\p_{x}^{2})^{-1}\p_{x}^{2}(u^{2})](\cdot, t, \tau)\right\|_{L^{\infty}}d\tau\\
&=:C\sum_{j=0}^{l+1}(1+t)^{-\frac{l+1-j}{2}}(D_{4.1}+D_{4.2}).
\end{split}
\end{align}
Now, let us treat $D_{4.1}$. First, we note that 
\[(1-\p_{x}^{2})^{-1}\p_{x}^{2}(u^{2})=(1-\p_{x}^{2})^{-1}\p_{x}(2uu_{x})=2(1-\p_{x}^{2})^{-1}\left((u_{x})^{2}+uu_{xx}\right).\]
Therefore, by using \eqref{J}, \eqref{eta-est2}, Young's inequality, the above equation, Lemma~\ref{lem.embedding}, Schwarz's inequality, \eqref{heat-decay} and \eqref{u-est-L2}, we can see that 
\begin{align}
\begin{split}\label{D4.1-est}
D_{4.1}
&\le C\int_{0}^{t/2}\left\|\p_{x}^{j}G(\cdot, t-\tau)\right\|_{L^{\infty}}\left\|(1-\p_{x}^{2})^{-1}\p_{x}^{2}(u^{2})(\cdot, \tau)\right\|_{L^{1}}d\tau \\
&\le C\int_{0}^{t/2}\left\|\p_{x}^{j}G(\cdot, t-\tau)\right\|_{L^{\infty}}\left(\left\|(u_{x})^{2}(\cdot, \tau)\right\|_{L^{1}}+\|(uu_{xx})(\cdot, \tau)\|_{L^{1}}\right)d\tau \\
&\le C\int_{0}^{t/2}\left\|\p_{x}^{j}G(\cdot, t-\tau)\right\|_{L^{\infty}}\left(\|u_{x}(\cdot, \tau)\|_{L^{2}}^{2}+\|u(\cdot, \tau)\|_{L^{2}}\|u_{xx}(\cdot, \tau)\|_{L^{2}}\right)d\tau \\
&\le CE_{s}\int_{0}^{t/2}(t-\tau)^{-\frac{1}{2}-\frac{j}{2}}\left\{(1+\tau)^{-\frac{3}{2}}+(1+\tau)^{-\frac{1}{4}}(1+\tau)^{-\frac{5}{4}}\right\}d\tau  \\
&\le CE_{s}(1+t)^{-\frac{1}{2}-\frac{j}{2}}, \ \ t\ge1.
\end{split}
\end{align}
To evaluate $D_{4.2}$, making the integration by parts, we rewrite $J[(1-\p_{x}^{2})^{-1}\p_{x}^{2}(u^{2})](x, t, \tau)$ as follows:
\begin{align}
\begin{split}\label{int-parts-D4}
J[(1-\p_{x}^{2})^{-1}\p_{x}^{2}(u^{2})](x, t, \tau)&
=\int_{\R}G(x-y, t-\tau)\eta(y, \tau)^{-1}(1-\p_{y}^{2})^{-1}\p_{y}^{2}(u^{2})(y, \tau)dy\\
&=\int_{\R}\p_{x}G(x-y, t-\tau)\eta(y, \tau)^{-1}(1-\p_{y}^{2})^{-1}\p_{y}(u^{2})(y, \tau)dy\\
&\ \ \ \ -\int_{\R}G(x-y, t-\tau)\p_{y}(\eta(y, \tau)^{-1})(1-\p_{y}^{2})^{-1}\p_{y}(u^{2})(y, \tau)dy.
\end{split}
\end{align}
Similarly as \eqref{prepare-D2.2}, for $i=0, 1$, we have from \eqref{eta-est2}, Lemma~\ref{lem.eta-decay} and Lemma~\ref{lem.embedding} that 
\begin{equation}\label{prepare-D4.2-1}
\left\|\p_{x}^{j}\left(\p_{x}^{i}(\eta(\cdot, \tau)^{-1})(1-\p_{x}^{2})^{-1}\p_{x}(u^{2})(\cdot, \tau)\right)\right\|_{L^{2}}
\le C\sum_{n=0}^{j}(1+\tau)^{-\frac{j+i-n}{2}}\left\|\p_{x}^{n+1}(u^{2})(\cdot, \tau)\right\|_{L^{2}}.
\end{equation}
Here, from \eqref{u-est-L2} and \eqref{u-est-Linfinity}, we obtain the following estimate for $n=0, 1, \cdots, j$.
\begin{align}\label{prepare-D4.2-2}
\begin{split}
&\left\|\p_{x}^{n+1}(u^{2})(\cdot, \tau)\right\|_{L^{2}}
\le C\sum_{m=0}^{n+1}\left\|\p_{x}^{n+1-m}u(\cdot, \tau)\p_{x}^{m}u(\cdot, \tau)\right\|_{L^{2}} \\
&\le C\left(\|u(\cdot, \tau)\|_{L^{\infty}}\left\|\p_{x}^{n+1}u(\cdot, \tau)\right\|_{L^{2}}+\sum_{m=0}^{n}\left\|\p_{x}^{n+1-m}u(\cdot, \tau)\right\|_{L^{2}}\left\|\p_{x}^{m}u(\cdot, \tau)\right\|_{L^{\infty}}\right) \\
&\le CE_{s}\left\{(1+\tau)^{-\frac{1}{2}}(1+\tau)^{-\frac{3}{4}-\frac{n}{2}}+\sum_{m=0}^{n}(1+\tau)^{-\frac{3}{4}-\frac{n-m}{2}}(1+\tau)^{-\frac{1}{2}-\frac{m}{2}}\right\} \\
&\le CE_{s}(1+\tau)^{-\frac{5}{4}-\frac{n}{2}}, \ \ t\ge0.
\end{split}
\end{align}
Therefore, combining \eqref{prepare-D4.2-1} and \eqref{prepare-D4.2-2}, for $i=0, 1$, we get 
\begin{equation}\label{prepare-D4.2-3}
\left\|\p_{x}^{j}\left(\p_{x}^{i}(\eta(\cdot, \tau)^{-1})(1-\p_{x}^{2})^{-1}\p_{x}(u^{2})(\cdot, \tau)\right)\right\|_{L^{2}}
\le CE_{s}(1+\tau)^{-\frac{5}{4}-\frac{j+i}{2}}.
\end{equation}
Thus, from \eqref{int-parts-D4}, Young's inequality, \eqref{heat-decay} and \eqref{prepare-D4.2-3}, we have
\begin{align}
\begin{split}\label{D4.2-est}
D_{4.2}&\le C\int_{t/2}^{t}\left\|\p_{x}G(\cdot, t-\tau)\right\|_{L^{2}}\left\|\p_{x}^{j}\left(\eta(\cdot, \tau)^{-1}(1-\p_{x}^{2})^{-1}\p_{x}(u^{2})(\cdot, \tau)\right)\right\|_{L^{2}}d\tau \\
&\ \ \ +C\int_{t/2}^{t}\|G(\cdot, t-\tau)\|_{L^{2}}\left\|\p_{x}^{j}\left(\p_{x}(\eta(\cdot, \tau)^{-1})(1-\p_{x}^{2})^{-1}\p_{x}(u^{2})(\cdot, \tau)\right)\right\|_{L^{2}}d\tau \\
&\le CE_{s}\int_{t/2}^{t}(t-\tau)^{-\frac{3}{4}}(1+\tau)^{-\frac{5}{4}-\frac{j}{2}}d\tau
+CE_{s}\int_{t/2}^{t}(t-\tau)^{-\frac{1}{4}}(1+\tau)^{-\frac{7}{4}-\frac{j}{2}}d\tau\\
&\le CE_{s}(1+t)^{-1-\frac{j}{2}}, \ \ t\ge0.
\end{split}
\end{align}
Finally, combining \eqref{D4-split}, \eqref{D4.1-est} and \eqref{D4.2-est}, we obtain 
\begin{equation}\label{D4-est}
\left\|\p_{x}^{l}D_{4}(\cdot, t)\right\|_{L^{\infty}}\le CE_{s}(1+t)^{-1-\frac{l}{2}}, \ \ t\ge1, \ \ 0\le l\le s-2.
\end{equation}

Summarizing up \eqref{integral-eq-D}, \eqref{D1-est}, \eqref{D2-est}, \eqref{D3-est} and \eqref{D4-est}, we eventually arrive at 
\begin{align*}
\left\|\p_{x}^{l}D[u, \psi](\cdot, t)\right\|_{L^{\infty}}
&\le CE_{s}(1+t)^{-1-\frac{l}{2}}\log(2+t)+CE_{s}(1+t)^{-1-\frac{l}{2}} \\
&\ \ \ +C\begin{cases}
(1+t)^{-\alpha+\frac{1}{2}-\frac{l}{2}}, &1<\alpha<3/2, \\
(1+t)^{-1-\frac{l}{2}}\log(2+t), &\alpha=3/2, \\
(1+t)^{-1-\frac{l}{2}},  &3/2<\alpha<2
\end{cases} \\
&\le C\begin{cases}
(1+t)^{-\alpha+\frac{1}{2}-\frac{l}{2}}, &t\ge1, \ 1<\alpha<3/2, \\
(1+t)^{-1-\frac{l}{2}}\log(2+t),  &t\ge1, \ 3/2\le \alpha<2,
\end{cases}\quad 0\le l \le s-2.
\end{align*}
This completes the proof.
\end{proof}

\begin{proof}[\rm{\bf{End of the Proof of Theorem~\ref{main.thm-2nd.AP}~for~$\bm{1<\alpha<2}$}}]
It follows from \eqref{difference} and \eqref{integral-eq-2nd} that 
\begin{equation*}
u(x, t)-\chi(x, t)-Z(x, t)=U[\psi_{0}](x, t, 0)-Z(x, t)+D[u, \psi](x, t), 
\end{equation*}
where $Z(x, t)$, $U$ and $D[u, \psi](x, t)$ are defined by \eqref{Second-AP-Z}, \eqref{U} and \eqref{integral-eq-D}, respectively. Then, by using Proposition~\ref{prop.D-est} to the above equation, we obtain the following estimate: 
\begin{align*}
&\left\|\p_{x}^{l}(u(\cdot, t)-\chi(\cdot, t)-Z(\cdot, t))\right\|_{L^{\infty}} \\
&\le \left\|\p_{x}^{l}(U[\psi_{0}](\cdot, t, 0)-Z(\cdot, t))\right\|_{L^{\infty}}
+C\begin{cases}
(1+t)^{-\alpha+\frac{1}{2}-\frac{l}{2}}, &t\ge1, \ 1<\alpha<3/2, \\
(1+t)^{-1-\frac{l}{2}}\log(2+t), &t\ge1, \ 3/2\le \alpha<2
\end{cases}
\end{align*}
for all integer $l$ satisfying $0\le l \le s-2$. 
Therefore, from \eqref{linear-2nd.AP-1}, we finally obtain 
\[\limsup_{t\to \infty}(1+t)^{\frac{\alpha}{2}+\frac{l}{2}}\left\|\p_{x}^{l}(u(\cdot, t)-\chi(\cdot, t)-Z(\cdot, t))\right\|_{L^{\infty}}=0.\]
Thus, we get \eqref{main.thm-2nd.AP-1}. It means that the proof of Theorem~\ref{main.thm-2nd.AP}~for~$1<\alpha<2$ has completed. 
\end{proof}

\subsection{Proof of Theorem \ref{main.thm-2nd.AP}~for~$\bm{\alpha\ge2}$}  

Finally in this last subsection, we shall complete the proof of Theorem~\ref{main.thm-2nd.AP} in the case of $\alpha\ge2$. Namely, we prove \eqref{main.thm-2nd.AP-2} and \eqref{main.thm-2nd.AP-3}.  First, let us introduce the following auxiliary Cauchy problem: 
\begin{align}\label{second-aux}
\begin{split}
v_{t}+(\beta \chi v)_{x}-v_{xx}&=-\gamma \chi_{xxx}, \ \ x\in \R, \ t>0, \\
v(x, 0)&=0, \ \ x\in \R. 
\end{split}
\end{align}
Then, the leading term of the solution $v(x, t)$ to \eqref{second-aux} is given by $V(x, t)$ defined by \eqref{Second-AP-V}. Actually, the following asymptotic formula is true (for the proof, see Proposition~4.3 in \cite{F19-1}). 
\begin{prop}\label{lem.second-aux}
Let $l$ be a non-negative integer. If $|M| \le 1$, then we have
\begin{equation}\label{second-formula}
\left\|\p_{x}^{l}(v(\cdot, t)-V(\cdot, t))\right\|_{L^{\infty}} \le C|M|(1+t)^{-1-\frac{l}{2}}, \ \ t\ge1. 
\end{equation}
Here, $v(x, t)$ is the solution to \eqref{second-aux} and $V(x, t)$ is defined by \eqref{Second-AP-V}.
\end{prop}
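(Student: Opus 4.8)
\medskip
\noindent\textbf{Proof proposal.}
The plan is to feed the data of \eqref{second-aux} into the representation formula of Lemma~\ref{lem.formula} and then isolate $V$ by a moment computation. Since $v(\cdot,0)\equiv 0$ and $-\gamma\chi_{xxx}=\p_x(-\gamma\chi_{xx})$, Lemma~\ref{lem.formula} (with $z_0\equiv0$ and $\lambda=-\gamma\chi_{xx}$) gives, using $\int_{-\infty}^{y}\chi_{\xi\xi\xi}(\xi,\tau)\,d\xi=\chi_{yy}(y,\tau)$,
\begin{equation*}
v(x,t)=-\gamma\int_{0}^{t}\int_{\R}\p_x\big(G(x-y,t-\tau)\eta(x,t)\big)\,\eta(y,\tau)^{-1}\chi_{yy}(y,\tau)\,dy\,d\tau .
\end{equation*}
The key algebraic input is $\p_y\big(\eta(y,\tau)^{-1}\big)=-\tfrac{\beta}{2}\chi(y,\tau)\eta(y,\tau)^{-1}$, immediate from \eqref{heat-eta}; integrating by parts twice in $y$ and using the self-similar scaling of $\chi$ and $\eta$ and the definition \eqref{kappa} of $d$ yields the mass identity
\begin{equation*}
m(\tau):=\int_{\R}\eta(y,\tau)^{-1}\chi_{yy}(y,\tau)\,dy=\frac{\beta^{2}}{8}\int_{\R}\eta(y,\tau)^{-1}\chi(y,\tau)^{3}\,dy=\frac{\beta^{2}d}{8}\,(1+\tau)^{-1},
\end{equation*}
so that $\int_{0}^{t/2}m(\tau)\,d\tau=\frac{\beta^{2}d}{8}\log(1+t/2)=\frac{\beta^{2}d}{8}\big(\log(1+t)+O(1)\big)$ --- this barely-divergent integral is the source of the logarithm in $V$. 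The same computation gives that the first and second $y$-moments of $\eta^{-1}\chi_{yy}$ are $O\big(|M|(1+\tau)^{-1/2}\big)$ and $O(|M|)$ respectively, which is what is needed to control the Taylor errors below.

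I would then split $\int_{0}^{t}=\int_{0}^{t/2}+\int_{t/2}^{t}$. For $\tau\in(t/2,t)$ the contribution is harmless: via Lemma~\ref{lem.est-U2}, \eqref{J} and Young's inequality, together with $\|\p_x^{j}(\eta^{-1}\chi_{xx})(\cdot,\tau)\|_{L^{\infty}}\le C|M|(1+\tau)^{-3/2-j/2}$ (from Lemma~\ref{lem.diffusion-decay}, \eqref{eta-est2} and Lemma~\ref{lem.eta-decay}) and the $L^p$--decay of $G$ from Lemma~\ref{lem.heat-decay}, this piece is bounded in $L^\infty$ by $C|M|(1+t)^{-1-l/2}$ after applying $\p_x^{l}$. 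On the main range $\tau\in(0,t/2)$, where $t-\tau\sim t$, I would Taylor-expand $G(x-y,t-\tau)$ about $y=0$ and replace $G(x,t-\tau)$ by $G(x,t)$: with $\mathcal G(x,t):=G(x,t)\eta(x,t)$,
\begin{equation*}
\p_x\big(G(x-y,t-\tau)\eta(x,t)\big)=\p_x\mathcal G(x,t)+R(x,y,t,\tau),
\end{equation*}
where $R$ collects the first- and second-order Taylor remainders in $y$ and the time-replacement error. Integrating the leading term against $\eta^{-1}\chi_{yy}$ gives $m(\tau)\,\p_x\mathcal G(x,t)$; integrating $R$, applying $\p_x^{l}$, and using the moment bounds together with the estimates for $\|\p_x^{l+2}G(\cdot,t-\tau)\|_{L^\infty}$, $\|\p_x^{l+3}G(\cdot,t-\tau)\|_{L^\infty}$ and the one-time-derivative estimate for $G$ from Lemma~\ref{lem.heat-decay}, a routine $\tau$-integration shows all resulting terms are $O\big(|M|(1+t)^{-1-l/2}\big)$. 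Hence, with $\kappa=\beta^{2}\gamma/8$ from \eqref{kappa},
\begin{equation*}
\p_x^{l}v(x,t)=-\gamma\Big(\int_{0}^{t/2}m(\tau)\,d\tau\Big)\p_x^{l+1}\mathcal G(x,t)+O\big(|M|(1+t)^{-1-l/2}\big)=-\kappa d\,\log(1+t)\,\p_x^{l+1}\mathcal G(x,t)+O\big(|M|(1+t)^{-1-l/2}\big).
\end{equation*}

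It remains to pass to the self-similar limit of $\mathcal G$: an elementary Taylor expansion of the Gaussian together with $\eta(x,t)=\eta_{*}\big(x/\sqrt{1+t}\big)$ gives, with $V_{*}$ as in \eqref{V*},
\begin{equation*}
\p_x^{l+1}\mathcal G(x,t)=(1+t)^{-1-l/2}\,V_{*}^{(l)}\big(x/\sqrt{1+t}\big)+O\big((1+t)^{-2-l/2}\big),
\end{equation*}
so that by the definition \eqref{Second-AP-V} of $V$ the leading term above equals $\p_x^{l}V(x,t)$ modulo $O\big(|M|(1+t)^{-1-l/2}\big)$ (here one uses $\log(1+t)\,(1+t)^{-1}\to0$ and $|\kappa d|\le C|M|$ when $|M|\le1$). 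Collecting the estimates proves \eqref{second-formula}. The hard part is the critical, logarithmic character of the argument: the leading coefficient arises from the barely-divergent $\int_{0}^{t/2}(1+\tau)^{-1}\,d\tau$, so every approximation --- the Taylor expansion in $y$, the time-replacement steps, and the self-similar limit of $\mathcal G$ --- must be shown to generate only \emph{non-logarithmic} remainders of size $(1+t)^{-1-l/2}$, genuinely smaller than $V\sim(1+t)^{-1}\log(1+t)$. This is precisely the analysis carried out for the generalized KdV--Burgers equation in \cite{F19-1}.
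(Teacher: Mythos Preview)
The paper does not supply its own proof of this proposition; it simply refers to Proposition~4.3 in \cite{F19-1}. Your outline is a faithful reconstruction of that argument: feed \eqref{second-aux} into the representation formula (Lemma~\ref{lem.formula}), compute the mass $m(\tau)=\tfrac{\beta^{2}d}{8}(1+\tau)^{-1}$ via the identity $\p_y(\eta^{-1})=-\tfrac{\beta}{2}\chi\,\eta^{-1}$, split $\int_0^t=\int_0^{t/2}+\int_{t/2}^t$, Taylor-expand $G(x-y,t-\tau)$ in $y$, and pass to the self-similar profile $V_*$. The moment bounds and the time-replacement error you quote are the right ones and do yield non-logarithmic remainders of order $(1+t)^{-1-l/2}$, so the sketch is correct and coincides with the cited proof.
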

\noindent 
This asymptotic formula plays an important role to complete the proof of \eqref{main.thm-2nd.AP-2} and \eqref{main.thm-2nd.AP-3}. 

Next, we shall further transform \eqref{BBMB-up} to the following appropriate expression:
\begin{align}
\label{BBMB-up-2}
\begin{split}
&u_{t}-u_{xxt}-u_{xx}+\gamma u_{xxx}+\beta uu_{x}=0\\
\Longleftrightarrow \ \ &u_{t}+\beta uu_{x}-u_{xx}=-\gamma (1-\p_{x}^{2})^{-1}\p_{x}^{3}u+(1-\p_{x}^{2})^{-1}\p_{x}^{4}u-\frac{\beta}{2} (1-\p_{x}^{2})^{-1}\p_{x}^{3}(u^{2}) \\
\Longleftrightarrow \ \ &u_{t}+\beta uu_{x}+\gamma u_{xxx}-u_{xx}=(1-\p_{x}^{2})^{-1}\p_{x}^{4}u-\frac{\beta}{2} (1-\p_{x}^{2})^{-1}\p_{x}^{3}(u^{2})-\gamma (1-\p_{x}^{2})^{-1}\p_{x}^{5}u. 
\end{split}
\end{align}
Now, we set 
\begin{equation}\label{difference-up-2}
w(x, t):=u(x, t)-\chi(x, t)-v(x, t)=\psi(x, t)-v(x, t). 
\end{equation}
Then, from \eqref{BBMB-up-2}, \eqref{Burgers} and \eqref{second-aux}, we have the following Cauchy problem: 
\begin{align}\label{difference-second-2}
\begin{split}
w_{t}+(\beta \chi w)_{x}-w_{xx}
&=-\p_{x}\left(\frac{\beta}{2}\psi^{2}\right)
-\gamma \psi_{xxx}
+(1-\p_{x}^{2})^{-1}\p_{x}^{4}u\\
&\ \ \ \,-\frac{\beta}{2}(1-\p_{x}^{2})^{-1}\p_{x}^{3}(u^{2})
-\gamma(1-\p_{x}^{2})^{-1}\p_{x}^{5}u, \ \ x\in \R, \ t>0,\\
w(x, 0)&=u_{0}(x)-\chi_{*}(x)=\psi_{0}(x), \ \ x\in \R.
\end{split}
\end{align}
In the same way to get \eqref{integral-eq-D}, applying Lemma~\ref{lem.formula} to \eqref{difference-second-2}, we obtain
\begin{equation}\label{integral-eq-2nd-2}
w(x, t)=U[\psi_{0}](x, t, 0)+N[u, \psi](x, t), 
\end{equation}
where the integral part $N[u, \psi](x, t)$ is defined by
\begin{align}\label{integral-eq-N}
\begin{split}
&N[u, \psi](x, t):=-\frac{\beta}{2}\int_{0}^{t}U[\p_{x}(\psi^{2})(\tau)](x, t, \tau)d\tau
-\gamma \int_{0}^{t}U[(1-\p_{x}^{2})^{-1}\p_{x}^{3}\psi(\tau)](x, t, \tau)d\tau \\
&\quad +\int_{0}^{t}U[(1-\p_{x}^{2})^{-1}\p_{x}^{4}u(\tau)](x, t, \tau)d\tau 
-\frac{\beta}{2}\int_{0}^{t}U[(1-\p_{x}^{2})^{-1}\p_{x}^{3}(u^{2})(\tau)](x, t, \tau)d\tau \\
&\quad -\gamma \int_{0}^{t}U[(1-\p_{x}^{2})^{-1}\p_{x}^{5}u(\tau)](x, t, \tau)d\tau \\
&\quad =:N_{1}(x, t)+N_{2}(x, t)+N_{3}(x, t)+N_{4}(x, t)+N_{5}(x, t). 
\end{split}
\end{align}

To complete the proof of \eqref{main.thm-2nd.AP-2} and \eqref{main.thm-2nd.AP-3}, it is sufficient to lead the following proposition: 
\begin{prop}\label{prop.N-est}
Let $s\ge2$ be an integer and $\alpha\ge2$. Assume that the initial data $u_{0}(x)$ satisfies the condition \eqref{data}, $u_{0}\in H^{s}(\R)$ and $\|u_{0}\|_{H^{s}}+\|u_{0}\|_{L^{1}}$ is sufficiently small. Then, the estimate  
\begin{equation}\label{N-est}
\left\|\p_{x}^{l}N[u, \psi](\cdot, t)\right\|_{L^{\infty}}
\le C(1+t)^{-1-\frac{l}{2}}, \ \ t\ge1
\end{equation}
holds for any integer $l$ satisfying $0\le l\le s-2$.
\end{prop}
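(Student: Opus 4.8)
The plan is to split $N[u,\psi]=\sum_{i=1}^{5}N_i$ as in \eqref{integral-eq-N} and bound $\|\partial_x^l N_i(\cdot,t)\|_{L^\infty}$ by $C(1+t)^{-1-l/2}$ for each $i$. The first observation is that $N_1$, $N_3$, $N_4$ are literally the terms $D_1$, $D_3$, $D_4$ already handled in the proof of Proposition~\ref{prop.D-est}: the bounds \eqref{D3-est}, \eqref{D4-est} are proved there using only Theorem~\ref{thm.SDGE} (together with Lemmas~\ref{lem.heat-decay}, \ref{lem.embedding}, \ref{lem.eta-decay}, \ref{lem.est-U2}), with no restriction on $\alpha$, so they give $\|\partial_x^l N_3\|_{L^\infty}+\|\partial_x^l N_4\|_{L^\infty}\le CE_s(1+t)^{-1-l/2}$; and rereading the estimate of $D_1$ with the decay of $\psi$ in the form \eqref{main.thm-1st.AP-L2}--\eqref{main.thm-1st.AP-Linfinity} valid for $\alpha\ge2$ --- so that $\|\psi^2(\tau)\|_{L^1}\le C(1+\tau)^{-3/2+2\varepsilon}$ is $\tau$-integrable for small $\varepsilon$ --- gives $\|\partial_x^l N_1\|_{L^\infty}\le C(1+t)^{-1-l/2}$. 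Likewise $N_2=-\gamma\int_0^t U[(1-\partial_x^2)^{-1}\partial_x^3\psi(\tau)]\,d\tau$ is the analogue of $D_2$ with $u$ replaced by $\psi$; running the argument that led to \eqref{D2-est} but feeding in the faster $\psi$-rates \eqref{main.thm-1st.AP-L2}--\eqref{main.thm-1st.AP-Linfinity} makes every $\tau$-integral there convergent, so the logarithm in \eqref{D2-est} disappears and $\|\partial_x^l N_2\|_{L^\infty}\le C(1+t)^{-1-l/2}$.

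Hence the only genuinely new term is the highest-order one, $N_5=-\gamma\int_0^t U[(1-\partial_x^2)^{-1}\partial_x^5 u(\tau)]\,d\tau$. I would write its integrand as $U[\partial_x\lambda_5]$ with $\lambda_5=(1-\partial_x^2)^{-1}\partial_x^4 u$, apply Lemma~\ref{lem.est-U2}, and reduce to bounding $\|\partial_x^j J[\lambda_5](\cdot,t,\tau)\|_{L^\infty}$ for $0\le j\le l+1$, with $J[\lambda]=G(t-\tau)*(\eta^{-1}\lambda)(\tau)$, integrated against $(1+t)^{-(l+1-j)/2}\,d\tau$. The structural point is to use the (up to two) extra orders of smoothing of the nonlocal operator: since $(1-\partial_x^2)^{-1}\partial_x$ has an $L^1$ kernel and $\partial_x^2(1-\partial_x^2)^{-1}=-1+(1-\partial_x^2)^{-1}$ is $L^p$-bounded (Lemma~\ref{lem.embedding}), one has $\|(1-\partial_x^2)^{-1}\partial_x^{m}g\|_{L^p}\le C\|\partial_x^{m-1}g\|_{L^p}$ and $\le C\|\partial_x^{m-2}g\|_{L^p}$, and choosing whichever keeps the derivative count within $H^s$ (possible because $l\le s-2$) yields $\tau$-integrable decay such as $\|(1-\partial_x^2)^{-1}\partial_x^{3}u\|_{L^2}\le C\|\partial_x^2 u\|_{L^2}\le CE_s(1+\tau)^{-5/4}$. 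I would then split $\int_0^t=\int_0^{t/2}+\int_{t/2}^t$. On $[0,t/2]$, where $t-\tau$ is comparable to $t$, I would integrate by parts once in $y$ inside $J[\lambda_5]$ exactly as in \eqref{int-parts}, writing $\lambda_5=\partial_y\big((1-\partial_y^2)^{-1}\partial_y^3 u\big)$ and moving that derivative onto $G$ and onto $\eta^{-1}$; the main piece, of the form $\partial_x^{j+1}G(t-\tau)*\big(\eta^{-1}(1-\partial_y^2)^{-1}\partial_y^3 u\big)$ estimated by $\|\partial_x^{j+1}G(t-\tau)\|_{L^2}\|\eta^{-1}(1-\partial_y^2)^{-1}\partial_y^3 u\|_{L^2}$, contributes $(1+t)^{-5/4-l/2}$, while the cross piece $\partial_x^{j}G(t-\tau)*\big(\partial_y\eta^{-1}\cdot(1-\partial_y^2)^{-1}\partial_y^3 u\big)$ estimated by $\|\partial_x^{j}G(t-\tau)\|_{L^\infty}\|\partial_y\eta^{-1}\|_{L^2}\|(1-\partial_y^2)^{-1}\partial_y^3 u\|_{L^2}$ and using \eqref{eta-decay} together with the $(1+\tau)^{-5/4}$-decay above (total $(1+\tau)^{-3/2}$, integrable) contributes \emph{exactly} $(1+t)^{-1-l/2}$, with no logarithm. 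On $[t/2,t]$, where $G(t-\tau)$ is singular, I would instead place at most one derivative on $G$, writing $\partial_x^j J[\lambda_5]=\partial_x^{\min(j,1)}G(t-\tau)*\partial_x^{(j-1)_+}(\eta^{-1}\lambda_5)$, so that the heat factor is at worst $\|\partial_x G(t-\tau)\|_{L^2}=C(t-\tau)^{-3/4}$ (still $\tau$-integrable) and $\|\partial_x^{(j-1)_+}(\eta^{-1}\lambda_5)(\tau)\|_{L^2}\le CE_s(1+\tau)^{-5/4-(j-1)_+/2}$; the outcome is again $(1+t)^{-1-l/2}$.

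The step I expect to be the main obstacle is precisely this treatment of $N_5$ on $[0,t/2]$: it is the highest-order term, and the $\eta^{-1}$-cross terms produced by the integration by parts carry the slowest-decaying factors, so one must combine the full two orders of smoothing, the sharp $\eta^{-1}$-decay of Lemma~\ref{lem.eta-decay}, and the $L^\infty\times L^1$ pairing of the heat kernel just right in order to land exactly on $(1+t)^{-1-l/2}$ and not on the weaker $(1+t)^{-3/4-l/2}$ nor on $(1+t)^{-1-l/2}\log(1+t)$. The critical case $\alpha=2$ of Theorem~\ref{main.thm-2nd.AP} is exactly the one forcing this sharpness, since there the conclusion for $u-\chi-Z-V$ only tolerates an error $o\big((1+t)^{-1-l/2}\log(1+t)\big)$. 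Adding the five bounds then gives \eqref{N-est}.
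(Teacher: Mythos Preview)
Your proposal is correct and follows essentially the same route as the paper: you identify $N_1,N_3,N_4$ with $D_1,D_3,D_4$ (re-running $D_1$ with the $\alpha\ge2$ decay of $\psi$ so that the $\tau$-integral converges), treat $N_2$ as the $\psi$-analogue of $D_2$ so that the logarithm disappears, and handle the new term $N_5$ by Lemma~\ref{lem.est-U2}, an integration by parts as in \eqref{int-parts-2}, and the two orders of smoothing $\|(1-\partial_x^2)^{-1}\partial_x^{j+2}u\|_{L^2}\le\|\partial_x^{j}u\|_{L^2}$, exactly the inequality \eqref{embedding-3} the paper isolates for this purpose. The only cosmetic difference is that on $[t/2,t]$ for $N_5$ you bypass the integration by parts and apply the two-order smoothing directly to $\lambda_5$, whereas the paper keeps the $D_3$ template throughout; both variants land on $(1+t)^{-1-l/2}$ without a logarithm.
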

\begin{proof}
We shall evaluate each term $N_{i}(x, t)$ ($i=1, 2, 3, 4, 5$) in the right hand side of \eqref{integral-eq-N}. 
First, we start with evaluation of $N_{1}(x, t)$. From the definitions \eqref{integral-eq-D} and \eqref{integral-eq-N}, we note that $N_{1}(x, t)\equiv D_{1}(x, t)$. However, we need to re-evaluate it for the case of $\alpha\ge2$. Now, recalling \eqref{D1-split} and replacing $D_{1}(x, t)$ with $N_{1}(x, t)$, we have
\begin{align}
\begin{split}\label{N1-split}
\left\|\p_{x}^{l}N_{1}(\cdot, t)\right\|_{L^{\infty}}
&\le C\sum_{j=0}^{l+1}(1+t)^{-\frac{l+1-j}{2}}\left(\int_{0}^{t/2}+\int_{t/2}^{t}\right)\left\|\p_{x}^{j}J[\psi^{2}](\cdot, t, \tau)\right\|_{L^{\infty}}d\tau\\
&=:C\sum_{j=0}^{l+1}(1+t)^{-\frac{l+1-j}{2}}(N_{1.1}+N_{1.2}).
\end{split}
\end{align}
Similarly as $D_{1.1}$, from \eqref{J}, Young's inequality, \eqref{eta-est2}, \eqref{heat-decay} and \eqref{main.thm-1st.AP-L2}, we get
\begin{align}
\begin{split}\label{N1.1-est}
N_{1.1}&\le C\int_{0}^{t/2}\left\|\p_{x}^{j}G(\cdot, t-\tau)\right\|_{L^{\infty}}\left\|\psi^{2}(\cdot, \tau)\right\|_{L^{1}}d\tau \\
&\le C\int_{0}^{t/2}(t-\tau)^{-\frac{1}{2}-\frac{j}{2}}(1+\tau)^{-\frac{3}{2}+2\e}d\tau 
\le C(1+t)^{-\frac{1}{2}-\frac{j}{2}}, \ \ t\ge1.
\end{split}
\end{align}
For $N_{1.2}$, modifying \eqref{D1.2-est-2} a little bit, we get the following estimate for $N_{1.2}$:
\begin{align}
\begin{split}\label{N1.2-est}
N_{1.2}&\le C\int_{t/2}^{t}\|G(\cdot, t-\tau)\|_{L^{1}}\left\|\p_{x}^{j}((\eta^{-1}\psi^{2})(\cdot, \tau))\right\|_{L^{\infty}}d\tau\\
&\le C\int_{t/2}^{t}(1+\tau)^{-2-\frac{j}{2}+2\e}d\tau \le C(1+t)^{-1-\frac{j}{2}+2\e}, \ \ t\ge0, 
\end{split}
\end{align}
where we have used the following estimate corresponding to \eqref{prepare-for-D1}:
\[
\left\|\p_{x}^{j}((\eta^{-1}\psi^{2})(\cdot, \tau))\right\|_{L^{\infty}}
\le C(1+\tau)^{-2-\frac{j}{2}+2\e}, \ \ t\ge0.
\]
Therefore, summing up \eqref{N1-split}, \eqref{N1.1-est} and \eqref{N1.2-est}, we obtain  
\begin{align}\label{N1-est}
\begin{split}
\left\|\p_{x}^{l}N_{1}(\cdot, t)\right\|_{L^{\infty}}
&\le C(1+t)^{-1-\frac{l}{2}}+C(1+t)^{-\frac{3}{2}-\frac{l}{2}+2\e} \\
&\le C(1+t)^{-1-\frac{l}{2}}, \ \ t\ge1, \ 0\le l \le s-2.
\end{split}
\end{align}

Next, we deal with the estimate for $N_{2}(x, t)$. Similarly as \eqref{D2-split}, we have 
\begin{align}
\begin{split}\label{N2-split}
\left\|\p_{x}^{l}N_{2}(\cdot, t)\right\|_{L^{\infty}}
&\le C\sum_{j=0}^{l+1}(1+t)^{-\frac{l+1-j}{2}}\left(\int_{0}^{t/2}+\int_{t/2}^{t}\right)\left\|\p_{x}^{j}J[(1-\p_{x}^{2})^{-1}\p_{x}^{2}\psi](\cdot, t, \tau)\right\|_{L^{\infty}}d\tau\\
&=:C\sum_{j=0}^{l+1}(1+t)^{-\frac{l+1-j}{2}}(N_{2.1}+N_{2.2}).
\end{split}
\end{align}
Also, making the integration by parts, we rewrite $J[(1-\p_{x}^{2})^{-1}\p_{x}^{2}\psi](x, t, \tau)$ as follows:  
\begin{align}
\begin{split}\label{int-parts-N}
J[(1-\p_{x}^{2})^{-1}\p_{x}^{2}\psi](x, t, \tau)&
=\int_{\R}G(x-y, t-\tau)\eta(y, \tau)^{-1}(1-\p_{y}^{2})^{-1}\p_{y}^{2}\psi(y, \tau)dy\\
&=\int_{\R}\p_{x}G(x-y, t-\tau)\eta(y, \tau)^{-1}(1-\p_{y}^{2})^{-1}\p_{y}\psi(y, \tau)dy\\
&\ \ \ \ -\int_{\R}G(x-y, t-\tau)\p_{y}(\eta(y, \tau)^{-1})(1-\p_{y}^{2})^{-1}\p_{y}\psi(y, \tau)dy.
\end{split}
\end{align}
Therefore, in the same way to get \eqref{D2.1-est}, by using \eqref{int-parts}, Young's inequality, \eqref{eta-est2}, \eqref{heat-decay}, Lemma~\ref{lem.embedding}, Schwarz's
inequality, \eqref{main.thm-1st.AP-L2} and Lemma~\ref{lem.eta-decay}, we can see that 
\begin{align}
\begin{split}\label{N2.1-est}
&N_{2.1}\le  C\int_{0}^{t/2}\left\|\p_{x}^{j+1}G(\cdot, t-\tau)\right\|_{L^{2}}\left\|\p_{x}\psi(\cdot, \tau)\right\|_{L^{2}}d\tau \\
&\qquad \quad+C\int_{0}^{t/2}\left\|\p_{x}^{j}G(\cdot, t-\tau)\right\|_{L^{\infty}}\left\|\p_{x}(\eta(\cdot, \tau)^{-1})\right\|_{L^{2}}\left\|\p_{x}\psi(\cdot, \tau)\right\|_{L^{2}}d\tau \\
&\le CE_{s}\int_{0}^{t/2}(t-\tau)^{-\frac{3}{4}-\frac{j}{2}}(1+\tau)^{-\frac{5}{4}+\e}d\tau
+CE_{s}\int_{0}^{t/2}(t-\tau)^{-\frac{1}{2}-\frac{j}{2}}(1+\tau)^{-\frac{1}{4}}(1+\tau)^{-\frac{5}{4}+\e}d\tau\\
&\le CE_{s}(1+t)^{-\frac{3}{4}-\frac{j}{2}}+CE_{s}(1+t)^{-\frac{1}{2}-\frac{j}{2}}
\le CE_{s}(1+t)^{-\frac{1}{2}-\frac{j}{2}}, \ \ t\ge1.
\end{split}
\end{align}
For $i=0, 1$, in the same way as \eqref{prepare-D2.2}, by using \eqref{eta-est2}, Lemma~\ref{lem.eta-decay}, Lemma~\ref{lem.embedding} and \eqref{main.thm-1st.AP-L2}, we get 
\begin{equation}\label{prepare-N2.2}
\left\|\p_{x}^{j}\left(\p_{x}^{i}(\eta(\cdot, \tau)^{-1})(1-\p_{x}^{2})^{-1}\p_{x}\psi(\cdot, \tau)\right)\right\|_{L^{2}}
\le CE_{s}(1+\tau)^{-\frac{5}{4}-\frac{i+j}{2}+\e}, \ \ t\ge0.
\end{equation}
Thus, from \eqref{int-parts-N}, Young's inequality, \eqref{heat-decay} and \eqref{prepare-N2.2}, we obtain 
\begin{align}
\begin{split}\label{N2.2-est}
N_{2.2}&\le C\int_{t/2}^{t}\left\|\p_{x}G(\cdot, t-\tau)\right\|_{L^{2}}\left\|\p_{x}^{j}\left(\eta(\cdot, \tau)^{-1}(1-\p_{x}^{2})^{-1}\p_{x}\psi(\cdot, \tau)\right)\right\|_{L^{2}}d\tau \\
&\ \ \ +C\int_{t/2}^{t}\|G(\cdot, t-\tau)\|_{L^{2}}\left\|\p_{x}^{j}\left(\p_{x}(\eta(\cdot, \tau)^{-1})(1-\p_{x}^{2})^{-1}\p_{x}\psi(\cdot, \tau)\right)\right\|_{L^{2}}d\tau \\
&\le CE_{s}\int_{t/2}^{t}(t-\tau)^{-\frac{3}{4}}(1+\tau)^{-\frac{5}{4}-\frac{j}{2}+\e}d\tau
+CE_{s}\int_{t/2}^{t}(t-\tau)^{-\frac{1}{4}}(1+\tau)^{-\frac{7}{4}-\frac{j}{2}+\e}d\tau\\
&\le CE_{s}(1+t)^{-1-\frac{j}{2}+\e}, \ \ t\ge0.
\end{split}
\end{align}
Combining \eqref{N2-split}, \eqref{N2.1-est} and \eqref{N2.2-est}, we obtain 
\begin{align}\label{N2-est}
\begin{split}
\left\|\p_{x}^{l}N_{2}(\cdot, t)\right\|_{L^{\infty}}
&\le C(1+t)^{-1-\frac{l}{2}}+C(1+t)^{-\frac{3}{2}-\frac{l}{2}+\e} \\
&\le C(1+t)^{-1-\frac{l}{2}}, \ \ t\ge1, \ 0\le l \le s-2.
\end{split}
\end{align}

Next, we shall treat $N_{3}(x, t)$ and $N_{4}(x, t)$. For these terms, we can use \eqref{D3-est} and \eqref{D4-est} as the estimates for $N_{3}(x, t)$ and $N_{4}(x, t)$, respectively. Actually, from the definitions of $D[u, \psi](x, t)$ and $N[u, \psi](x, t)$, i.e. \eqref{integral-eq-D} and \eqref{integral-eq-N}, we see that $N_{3}(x, t)\equiv D_{3}(x, t)$ and $N_{4}(x, t)\equiv D_{4}(x, t)$. In addition, the estimates \eqref{D3-est} and \eqref{D4-est} does not depend on $\alpha$. Therefore, we immediately get 
\begin{equation}\label{N3-N4-est}
\left\|\p_{x}^{l}N_{i}(\cdot, t)\right\|_{L^{\infty}}\le CE_{s}(1+t)^{-1-\frac{l}{2}}, \ \ t\ge1, \ 0\le l \le s-2, \ i=3, 4.
\end{equation}

Finally, let us explain about the evaluation for $N_{5}(x, t)$. From Lemma~\ref{lem.est-U2}, we can see that 
\begin{align}\label{N5-def}
\begin{split}
\left\|\p_{x}^{l}N_{5}(\cdot, t)\right\|_{L^{\infty}}
\le C\sum_{j=0}^{l+1}(1+t)^{-\frac{1}{2}(l+1-j)}\int_{0}^{t}\left\|\p_{x}^{j}J[(1-\p_{x}^{2})^{-1}\p_{x}^{4}u](\cdot, t, \tau)\right\|_{L^{\infty}}d\tau.
\end{split}
\end{align}
Therefore, from the structure of the right hand side of \eqref{N5-def}, it can be realized that to evaluate $N_{5}(x, t)$, we need to get the estimate for $(1-\p_{x}^{2})^{-1}\p_{x}^{4}u(x, t)$. Now, by using Plancherel's theorem and \eqref{nonlocal-op}, instead of \eqref{embedding-2}, we obtain 
\begin{align}\label{embedding-3}
\begin{split}
\left\|(1-\p_{x}^{2})^{-1}\p_{x}^{j+2}u(\cdot, t)\right\|_{L^{2}}
&=\left\|\frac{(i\xi)^{j+2}}{1+\xi^{2}}\widehat{u}(\xi, t)\right\|_{L^{2}_{\xi}}
=\left\|\frac{\xi^{2}}{1+\xi^{2}}(i\xi)^{j}\widehat{u}(\xi, t)\right\|_{L^{2}_{\xi}}  \\
&\le \left\|(i\xi)^{j}\widehat{u}(\xi, t)\right\|_{L^{2}_{\xi}}
=\left\|\p_{x}^{j}u(\cdot, t)\right\|_{L^{2}}. 
\end{split}
\end{align}
By virtue of \eqref{embedding-3}, we can evaluate $N_{5}(x, t)$ in essentially the same way to get \eqref{D3-est}. Indeed, it follows from \eqref{N5-def} that the following estimate has established (we omit the details):   
\begin{equation}\label{N5-est}
\left\|\p_{x}^{l}N_{5}(\cdot, t)\right\|_{L^{\infty}}\le CE_{s}(1+t)^{-1-\frac{l}{2}}, \ \ t\ge1, \ \ 0\le l\le s-2.
\end{equation}

Summarizing up \eqref{integral-eq-N}, \eqref{N1-est}, \eqref{N2-est}, \eqref{N3-N4-est} and \eqref{N5-est}, we eventually arrive at 
\[
\left\|\p_{x}^{l}N[u, \psi](\cdot, t)\right\|_{L^{\infty}}
\le C(1+t)^{-1-\frac{l}{2}}, \ \ t\ge1, \ 0\le l\le s-2.
\]
This completes the proof.
\end{proof}

\begin{proof}[\rm{\bf{End of the Proof of Theorem~\ref{main.thm-2nd.AP}~for~$\bm{\alpha\ge2}$}}]
It follows from \eqref{difference-up-2} and \eqref{integral-eq-2nd-2} that 
\begin{equation}\label{u-chi-V}
u(x, t)-\chi(x, t)-V(x, t)=U[\psi_{0}](x, t, 0)+v(x, t)-V(x, t)+N[u, \psi](x, t), 
\end{equation}
where $v(x, t)$ is the solution to \eqref{second-aux}, and $U$, $V(x, t)$ and $N[u, \psi](x, t)$ are defined by \eqref{U}, \eqref{Second-AP-V} and \eqref{integral-eq-N}, respectively. In what follows, the proof is divided into two parts below:

\medskip
\noindent
\underline{{\rm (i)} For $\alpha>2$}: Applying Lemma~\ref{lem.est-U1}, Propositions~\ref{lem.second-aux} and \ref{prop.N-est} to \eqref{u-chi-V}, we immediately obtain 
\[
\left\|\p_{x}^{l}(u(\cdot, t)-\chi(\cdot, t)-V(\cdot, t))\right\|_{L^{\infty}}
\le C(1+t)^{-1-\frac{l}{2}}, \ \ t\ge1 
\]
for all integer $l$ satisfying $0\le l \le s-2$. This means that \eqref{main.thm-2nd.AP-3} has established. 

\medskip
\noindent
\underline{{\rm (ii)} For $\alpha=2$}: By subtracting $Z(x, t)$ (being defined by \eqref{Second-AP-Z}) from both sides of \eqref{u-chi-V}, we get
\begin{equation*}
u(x, t)-\chi(x, t)-Z(x, t)-V(x, t)=U[\psi_{0}](x, t, 0)-Z(x, t)+v(x, t)-V(x, t)+N[u, \psi](x, t). 
\end{equation*}
Then, by using Propositions~\ref{lem.second-aux} and \ref{prop.N-est} to the above equation, we obtain the following estimate: 
\begin{equation*}
\left\|\p_{x}^{l}(u(\cdot, t)-\chi(\cdot, t)-Z(\cdot, t)-V(\cdot, t))\right\|_{L^{\infty}} 
\le \left\|\p_{x}^{l}(U[\psi_{0}](\cdot, t, 0)-Z(\cdot, t))\right\|_{L^{\infty}}
+C(1+t)^{-1-\frac{l}{2}}, \ \ t\ge1
\end{equation*}
for all integer $l$ satisfying $0\le l \le s-2$. 
Therefore, from \eqref{linear-2nd.AP-2}, we eventually arrive at  
\[\limsup_{t\to \infty}\frac{(1+t)^{1+\frac{l}{2}}}{\log(1+t)}\left\|\p_{x}^{l}(u(\cdot, t)-\chi(\cdot, t)-Z(\cdot, t)-V(\cdot, t))\right\|_{L^{\infty}}=0.\]
Thus, we are able to see that \eqref{main.thm-2nd.AP-2} is true. 

\smallskip
As a conclusion, the asymptotic relation \eqref{main.thm-2nd.AP-2} and the inequality \eqref{main.thm-2nd.AP-3} have been proven. It means that the proof of Theorem~\ref{main.thm-2nd.AP}~for~$\alpha\ge 2$ has completed. 
\end{proof}

\section*{Acknowledgments}
The first author is supported by Grant-in-Aid for Research Activity Start-up No.20K22303, Japan Society for the Promotion of Science. The second author is supported by JST CREST Grant Number JPMJCR1913, Japan and Grant-in-Aid for Young Scientists Research No.19K14581, Japan Society for the Promotion of Science.



\bigskip
\par\noindent
\begin{flushleft}Ikki Fukuda\\
Division of Mathematics and Physics, \\
Faculty of Engineering, \\
Shinshu University\\
4-17-1, Wakasato, Nagano, 380-8553, JAPAN\\
E-mail: i\_fukuda@shinshu-u.ac.jp
\vskip15pt
Masahiro Ikeda\\
Department of Mathematics, \\
Faculty of Science and Technology, \\
Keio University \\
3-14-1, Hiyoshi, Kohoku-ku, Yokohama, 223-8522, JAPAN/\\
Center for Advanced Intelligence Project,\\ RIKEN, \\
Nihonbashi 1-chome Mitsui Building, 15th floor, 1-4-1 Nihonbashi,Chuo-ku, Tokyo, 103-0027, JAPAN \\
E-mail: masahiro.ikeda@keio.jp/masahiro.ikeda@riken.jp
\end{flushleft}

\end{document}